\documentclass[11pt,a4paper]{amsart}

% Document format.
\usepackage[margin=1.25in,a4paper]{geometry}
\usepackage{lmodern}
\usepackage[utf8]{inputenc}
\usepackage[T1]{fontenc}

\usepackage{microtype}
\usepackage[shortlabels]{enumitem}

% Language and symbols.
\usepackage[english]{babel}
\usepackage{mathrsfs}

% Colors.
\usepackage{xcolor}
\definecolor{linkcol}{RGB}{0,80,158}
\definecolor{citecol}{RGB}{46,117,120}

% Reference management.
\usepackage[colorlinks]{hyperref}
\hypersetup{
    colorlinks=true,
    linkcolor=linkcol,
    urlcolor=linkcol,
    citecolor=citecol,
}
\usepackage[hyperpageref]{backref}
\usepackage[msc-links,alphabetic]{amsrefs}
\usepackage[capitalize,nameinlink]{cleveref}
\crefname{equation}{}{}

% Theorem-like environments.
\theoremstyle{plain}
\newtheorem{theorem}{Theorem}[section]
\newtheorem{lemma}[theorem]{Lemma}
\newtheorem{corollary}[theorem]{Corollary}

\theoremstyle{definition}
\newtheorem*{definition}{Definition}
\newtheorem{example}[theorem]{Example}
\theoremstyle{remark}
\newtheorem*{remark}{Remark}

% Restating theorems.
\usepackage{thm-restate}

% Symbol commands.
\newcommand{\bC}{\mathbb{C}}
\newcommand{\bD}{\mathbb{D}}
\newcommand{\bQ}{\mathbb{Q}}
\newcommand{\bR}{\mathbb{R}}
\newcommand{\bZ}{\mathbb{Z}}
\newcommand{\cF}{\mathcal{F}}
\newcommand{\sA}{\mathscr{A}}
\newcommand{\sD}{\mathscr{D}}
\newcommand{\sH}{\mathscr{H}}
\newcommand{\sP}{\mathscr{P}}
\newcommand{\ap}{\mathrm{ap}}
\newcommand{\ext}{\mathrm{ext}}
\newcommand{\abs}[1]{\left\lvert#1\right\rvert}
\newcommand{\norm}[1]{\left\lVert#1\right\rVert}
\renewcommand{\Re}{\operatorname{Re}}
\renewcommand{\Im}{\operatorname{Im}}
\newcommand{\dd}{\mathrm{d}}

\title[Analytic almost periodic functions]{Montel's theorem and composition operators for analytic almost periodic functions}
\author{Viktor Andersson}
\address{Department of Mathematical Sciences, Norwegian University of Science and Technology (NTNU), 7491 Trondheim, Norway}
\email{viktor.andersson@ntnu.no}
\date{\today}

\begin{document}
\begin{abstract}
    We consider the Banach space $H^\infty_\ap(\bC_0)$ of bounded analytic functions on the open right half-plane $\bC_0$ that are almost periodic on some smaller half-plane, as well as the subspace $A_\ap(\bC_0)$ of those functions in $H^\infty_\ap(\bC_0)$ that are uniformly continuous on $\bC_0$. We prove a strong version of Montel's theorem for $H^\infty_\ap(\bC_0)$ and characterize the bounded composition operators on $H^\infty_\ap(\bC_0)$ and $A_\ap(\bC_0)$, as well as the compact composition operators on $H^\infty_\ap(\bC_0)$ and certain subspaces of it.
\end{abstract}
\thanks{The author is supported by Grant 354537 of the Research Council of Norway.}
\maketitle
\section{Introduction}

A complex-valued function $f$ defined on a half-plane $\bC_\kappa=\{s\in\bC:\Re s>\kappa\}$ is called \emph{almost periodic} if it is bounded, uniformly continuous, and the set $E_f(\varepsilon)$, consisting of all $\tau\in\bR$ such that
$$\abs{f(s+i\tau)-f(s)}\leq\varepsilon$$
for all $s\in\bC_\kappa$, is relatively dense for all $\varepsilon>0$. Here, a set $A\subseteq\bR$ is called \emph{relatively dense} if there exists a $d>0$ such that any closed interval of length $d$ intersects $A$. In this paper, we will be concerned with bounded analytic almost periodic functions. Denote by $H^\infty(\bC_0)$ the Banach space of bounded analytic functions in the right half-plane $\bC_0$ equipped with the supremum norm. We will be interested in the following subspace of $H^\infty(\bC_0)$.

\begin{definition}
    Let $H^\infty_\ap(\bC_0)$ denote the space of all $f\in H^\infty(\bC_0)$ that are almost periodic on $\bC_\kappa$ for some $\kappa>0$.
\end{definition}

The main motivation for studying $H^\infty_\ap(\bC_0)$ is the space $\sH^\infty$, consisting of all $f\in H^\infty(\bC_0)$ that can be written as a convergent Dirichlet series
$$f(s)=\sum_{n=1}^\infty a_nn^{-s}$$
on some half-plane $\bC_\kappa$ with $\kappa>0$. A celebrated theorem of H. Bohr \cite{bohr_uber_1913} asserts that the Dirichlet series of any $f\in\sH^\infty$ converges uniformly on $\bC_\kappa$ for \emph{all} $\kappa>0$. From this we observe the inclusions
\begin{equation}\label{eq:subspaces}
    \sH^\infty\subseteq H^\infty_\ap(\bC_0)\subseteq H^\infty(\bC_0).
\end{equation}
Using standard techniques one can show that if $f\in H^\infty_\ap(\bC_0)$, then $f$ is almost periodic on $\bC_\kappa$ for \emph{all} $\kappa>0$, yielding an analogue to Bohr's theorem (see \cref{cor:bohr-analogue} below). From this it also follows that $H^\infty_\ap(\bC_0)$ is a closed subspace of $H^\infty(\bC_0)$, and since $\sH^\infty$ is complete (e.g. \cite{queffelec_diophantine_2020}*{Theorem 6.1.2}), we see that the inclusions in \cref{eq:subspaces} hold as closed subspaces. The goal of this paper is to show that several results that hold for $\sH^\infty$ have analogous results in $H^\infty_\ap(\bC_0)$.

The first result of this character is a strong version of Montel's theorem. In \cite{bayart_hardy_2002}, F. Bayart showed that any uniformly bounded sequence in $\sH^\infty$ has a subsequence that converges uniformly on all half-planes $\bC_\kappa$ with $\kappa>0$ to some function in $\sH^\infty$, strengthening the classical Montel theorem in this setting. This result does not hold in $H^\infty_\ap(\bC_0)$, and there are many counterexamples one can easily construct (see \cref{ex:no-uniformly-convergent-subsequence} below). Perhaps the simplest one is the sequence $\{f_n\}_{n\in\bZ^+}$ defined by $f_n(s)=e^{-s/n}$, which is clearly uniformly bounded. If this strong version of Montel's theorem was true for $H^\infty_\ap(\bC_0)$, then this sequence would have a subsequence converging uniformly on all half-planes $\bC_\kappa$ with $\kappa>0$. Since the sequence converges pointwise to $1$, and
$$\sup_{s\in\bC_\kappa}\lvert e^{-s/n}-1\rvert\geq\lim_{\sigma\to\infty}(1-e^{-\sigma/n})=1$$
for all $\kappa>0$, this cannot happen. We will prove a characterization of precisely for which uniformly bounded families in $H^\infty_\ap(\bC_0)$ the strong version of Montel's theorem holds. To state our theorem, we introduce a notion which is inspired by a similar definition\footnote{We remark that Bohr used the term ``uniformly almost periodic'' to describe such a family. We avoid this as the term ``uniformly almost periodic'' is commonly also used for what we call almost periodic functions (e.g. \cite{besicovitch_almost_1955}).} by Bohr in \cite{bohr_contribution_1943} for almost periodic functions on the real line.

\begin{definition}
    A family $\cF$ of almost periodic functions on a half-plane $\bC_\kappa$ is called \emph{jointly almost periodic} if the set $E_\cF(\varepsilon)=\bigcap_{f\in\cF}E_f(\varepsilon)$ is relatively dense for all $\varepsilon>0$.
\end{definition}

Our characterization is then as follows.

\begin{restatable}{theorem}{strongmontel}\label{thm:strong-montel}
    Let $\cF$ be a uniformly bounded family in $H^\infty_\ap(\bC_0)$. The following are equivalent:
    \begin{enumerate}[(i)]
        \item\label{item:jointly-ap-somewhere} $\cF$ is jointly almost periodic on some half-plane $\bC_\kappa$ with $\kappa>0$.
        \item\label{item:jointly-ap-everywhere} $\cF$ is jointly almost periodic on all half-planes $\bC_\kappa$ with $\kappa>0$.
        \item\label{item:strong-montel} For every sequence $\{f_n\}_{n\in\bZ^+}$ in $\cF$ there exists a function $f\in H^\infty_\ap(\bC_0)$ and a subsequence $\{f_{n_k}\}_{k\in\bZ^+}$ that converges uniformly to $f$ on $\bC_\kappa$ for all $\kappa>0$.
    \end{enumerate}
\end{restatable}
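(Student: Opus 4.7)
The plan is to establish the cycle (ii) $\Rightarrow$ (i) $\Rightarrow$ (ii) $\Rightarrow$ (iii) $\Rightarrow$ (ii), the first implication being immediate from the definitions.

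For (i) $\Rightarrow$ (ii), the idea is to transport common $\varepsilon$-translation numbers between half-planes via a Phragm\'en--Lindel\"of estimate that is uniform in $f \in \cF$. Suppose $\cF$ is jointly almost periodic on $\bC_\kappa$ and fix $0 < \kappa' < \kappa$; the case $\kappa' \geq \kappa$ is trivial. Write $M = \sup_{f \in \cF}\|f\|_\infty$. For any $\tau \in E_\cF(\varepsilon)$ and any $f \in \cF$, the difference $h(s) = f(s + i\tau) - f(s)$ is bounded by $2M$ on $\bC_0$ and by $\varepsilon$ on $\bC_\kappa$, so the two-constants theorem applied to the strip $\{0 < \Re s < \kappa\}$ yields $|h(\kappa' + it)| \leq (2M)^{1-\kappa'/\kappa}\,\varepsilon^{\kappa'/\kappa}$ uniformly in $f$. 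Choosing $\varepsilon$ small forces the right-hand side below any prescribed threshold, so relative density of $E_\cF(\varepsilon)$ on $\bC_\kappa$ transfers to relative density on $\bC_{\kappa'}$.

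For (iii) $\Rightarrow$ (ii), the key observation is that in a metric space any set in which every sequence has a Cauchy subsequence is totally bounded; hence (iii) forces $\cF$ to be totally bounded in $L^\infty(\bC_\kappa)$ for every $\kappa > 0$. Fix $\kappa > 0$ and $\varepsilon > 0$, and choose a finite $\varepsilon/3$-net $\{f_1, \dots, f_N\} \subseteq \cF$. By the analogue of Bohr's theorem each $f_j$ is almost periodic on $\bC_\kappa$, and the classical theory of almost periodic functions gives that $\bigcap_{j=1}^N E_{f_j}(\varepsilon/3)$ is relatively dense. A triangle inequality then shows that every such common translation number lies in $E_\cF(\varepsilon)$, establishing joint almost periodicity on $\bC_\kappa$.

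The main work, and the main obstacle, is (ii) $\Rightarrow$ (iii). Given $\{f_n\} \subseteq \cF$, classical Montel first produces a subsequence $\{f_{n_k}\}$ converging uniformly on compact subsets of $\bC_0$ to some bounded analytic $g$. Fix $\kappa' > 0$ and $\varepsilon > 0$, and let $L$ be such that every interval of length $L$ meets $E_\cF(\varepsilon)$ on $\bC_{\kappa'}$. For $t \in \bR$, pick $\tau \in E_\cF(\varepsilon) \cap [t - L/2, t + L/2]$; then $\kappa' + i(t-\tau)$ lies in the compact segment $\{\kappa' + iu : |u| \leq L/2\}$, on which $f_{n_k} \to g$ uniformly, and passing to the pointwise limit in $|f_{n_k}(s + i\tau) - f_{n_k}(s)| \leq \varepsilon$ also puts $\tau$ in $E_g(\varepsilon)$. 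The triangle inequality therefore gives
\[
\sup_{t \in \bR}|f_{n_k}(\kappa' + it) - g(\kappa' + it)| \leq 2\varepsilon + \sup_{|u| \leq L/2}|f_{n_k}(\kappa' + iu) - g(\kappa' + iu)|,
\]
with the second term vanishing as $k \to \infty$. The delicate point is exactly this: joint almost periodicity is what compresses points of arbitrary imaginary part on the line $\{\Re s = \kappa'\}$ back to a bounded segment modulo a small error. Since $f_{n_k} - g$ is bounded analytic on $\bC_{\kappa'}$ with continuous boundary values on that line, the half-plane maximum modulus principle then upgrades the line estimate to $\sup_{\bC_{\kappa'}}|f_{n_k} - g| \to 0$. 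The same subsequence works simultaneously for every $\kappa' > 0$, and $g$ lies in $H^\infty_\ap(\bC_0)$ since uniform limits of almost periodic functions are almost periodic.
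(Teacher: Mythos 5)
Your proof is correct and follows essentially the same route as the paper: (ii)\,$\Rightarrow$\,(iii) combines classical Montel with joint almost periodicity to get uniform convergence on a vertical line and then upgrades to the half-plane by the maximum modulus principle, and (iii)\,$\Rightarrow$\,(ii) passes through total boundedness and the fact that finite families of almost periodic functions are jointly almost periodic (the paper's \cref{thm:finite-joint-ap}). The only cosmetic differences are that you spell out (i)\,$\Rightarrow$\,(ii) via a two-constants/three-lines estimate where the paper delegates this to \cref{thm:extend-joint-ap}, and in (ii)\,$\Rightarrow$\,(iii) you compare $f_{n_k}$ directly with the Montel limit rather than showing the sequence is uniformly Cauchy --- with a small bookkeeping caveat that you apply $\varepsilon$-translation numbers taken over $\bC_{\kappa'}$ at points on the closed line $\Re s=\kappa'$, which is harmless by continuity but is avoided outright in the paper by working with translation numbers over $\bC_{\kappa/2}$.
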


The equivalence of \labelcref{item:jointly-ap-somewhere} and \labelcref{item:jointly-ap-everywhere} can be shown by standard techniques of analytic functions on half-planes, and so arguably the more interesting equivalence is that of \labelcref{item:jointly-ap-everywhere} and \labelcref{item:strong-montel}. The main idea behind showing that \labelcref{item:jointly-ap-everywhere} implies \labelcref{item:strong-montel} is to first apply the classical Montel theorem, and then use joint almost periodicity to first extend the uniform convergence to vertical lines, and then to half-planes by analyticity. For the reverse implication, the key idea is to realize that condition \labelcref{item:strong-montel} is equivalent to having total boundedness of the family on each half-plane in the uniform norm. Recall that a metric space $(X,d)$ is called \emph{totally bounded} if for all $\varepsilon>0$ there exist some $x_1,\dots,x_n\in X$ such that if $x\in X$, then $d(x,x_k)<\varepsilon$ for some $1\leq k\leq n$. Using total boundedness, the problem of showing joint almost periodicity then reduces to the fact that any finite family of almost periodic functions is jointly almost periodic, which can be shown by techniques dating back to Bohr \cite{bohr_zur_1925} (see \cref{thm:finite-joint-ap} below).

Our approach highlights the importance of almost periodicity, and avoids the need for Bohr's theorem and Dirichlet series techniques, which are central tools in the proof of Bayart's theorem. Using our result, we also give an alternative proof of a recent extension of Bayart's theorem to significantly larger class of subspaces of $H^\infty_\ap(\bC_0)$ by Defant, Vidal, Schoolmann, and Sevilla-Peris (see \cite{defant_frechet_2021}*{Theorem 3.4} and \cref{thm:defant-montel} below).

Our second consideration concerns an analogue to the disk algebra in the setting of bounded analytic almost periodic functions. In \cite{aron_dirichlet_2017}, Aron, Bayart, Gauthier, Maestre, and Nestoridis introduced the space $\mathscr A(\bC_0)$ of all $f\in\sH^\infty$ that are uniformly continuous on $\bC_0$, and showed that this space consists of precisely the uniform limits of Dirichlet polynomials in $\bC_0$, i.e., functions $f$ on $\bC_0$ of the form
$$f(s)=\sum_{n=1}^N a_nn^{-s}$$
with $a_1,\dots,a_N\in\bC$. We define a corresponding space $A_\ap(\bC_0)$ for $H^\infty_\ap(\bC_0)$.

\begin{definition}
    Let $A_\ap(\bC_0)$ denote the space of all $f\in H^\infty_\ap(\bC_0)$ that are uniformly continuous on $\bC_0$.
\end{definition}

Recall that a general Dirichlet polynomial is a function $f$ on $\bC_0$ of the form
$$f(s)=\sum_{n=1}^Na_ne^{-\lambda_ns}$$
with $a_1,\dots,a_N\in\bC$ and $0=\lambda_1<\dots<\lambda_N$. We shall write $\sP_D$ for the space of all general Dirichlet polynomials. Our main result on $A_\ap(\bC_0)$ is then as follows.

\begin{restatable}{theorem}{apalgebradirichlet}\label{thm:ap-algebra-dirichlet}
    It holds that
    $$A_\ap(\bC_0)=\overline{\sP_D},$$
    where the closure in taken in the uniform norm.
\end{restatable}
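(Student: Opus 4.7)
The plan is to prove both inclusions separately. For $\overline{\sP_D} \subseteq A_\ap(\bC_0)$, it suffices to verify that each general Dirichlet polynomial $p(s) = \sum_{n=1}^N a_n e^{-\lambda_n s}$ with $\lambda_n \ge 0$ lies in $A_\ap(\bC_0)$: boundedness on $\bC_0$ follows from $|e^{-\lambda_n s}| \le 1$ there, Lipschitz (hence uniform) continuity on $\bC_0$ is a direct estimate on each term, and almost periodicity on every $\bC_\kappa$ holds because each $e^{-\lambda_n s}$ is periodic in $\Im s$ and finite sums of almost periodic functions are almost periodic. Since $H^\infty_\ap(\bC_0)$ is closed in $H^\infty(\bC_0)$ and uniform continuity on $\bC_0$ is preserved under uniform limits, $A_\ap(\bC_0)$ is uniformly closed, giving the inclusion.

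For the reverse inclusion, fix $f \in A_\ap(\bC_0)$. By uniform continuity $f$ extends continuously to $\overline{\bC_0}$; combining this with \cref{cor:bohr-analogue} (almost periodicity on every $\bC_\kappa$, $\kappa > 0$) a standard three-term estimate shows that the boundary restriction $f_0(t) := f(it)$ is uniformly almost periodic on $\bR$. The crucial step is to localize the Bohr spectrum of $f_0$ in $(-\infty, 0]$. For each $\mu \in \bR$ and $\sigma \ge 0$, set $a_\sigma(\mu) := \lim_{T \to \infty}\frac{1}{2T}\int_{-T}^T f(\sigma + it)\, e^{-i\mu t}\,\dd t$, apply Cauchy's theorem to the analytic function $s \mapsto f(s)\,e^{-\mu s}$ on the rectangle $[\sigma_1, \sigma_2] \times [-T, T]$, and let $T \to \infty$. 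Boundedness of $f$ kills the horizontal pieces in the limit and yields $a_\sigma(\mu) = C(\mu)\, e^{\mu \sigma}$ for some constant $C(\mu)$ independent of $\sigma > 0$. The bound $|a_\sigma(\mu)| \le \|f\|_\infty$ for every $\sigma > 0$ forces $C(\mu) = 0$ whenever $\mu > 0$; passing to $\sigma = 0$ via the uniform continuity of $f$ (which gives $a_\sigma(\mu) \to a_0(\mu)$) shows that the Bohr spectrum of $f_0$ lies in $(-\infty, 0]$.

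Since $f_0$ has spectrum in $(-\infty, 0]$, Bochner--Fej\'er summation produces trigonometric polynomials $P_n(t) = \sum_k c_{n,k}\, e^{-i\lambda_{n,k}t}$ with $\lambda_{n,k} \ge 0$ (only frequencies in the spectrum contribute nonzero terms) converging uniformly to $f_0$ on $\bR$. Lifting to the half-plane, each $P_n$ is the boundary trace of the Dirichlet polynomial $\tilde P_n(s) := \sum_k c_{n,k}\, e^{-\lambda_{n,k}s}$, which (after adjoining a vanishing $\lambda = 0$ term if needed) lies in $\sP_D$. Now $\tilde P_n - f$ is bounded analytic on $\bC_0$, continuous on $\overline{\bC_0}$, and the Phragm\'en--Lindel\"of principle for the half-plane gives $\|\tilde P_n - f\|_{H^\infty(\bC_0)} = \sup_{t \in \bR}|P_n(t) - f_0(t)| \to 0$, placing $f$ in $\overline{\sP_D}$. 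The main obstacle is the spectrum localization: combining the boundedness of $f$ on $\bC_0$ with the mean-value structure of Bohr--Fourier coefficients through the contour argument; once this is in hand, Bochner--Fej\'er approximation on the boundary and Phragm\'en--Lindel\"of into the half-plane are routine.
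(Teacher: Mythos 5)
Your proof is correct and follows essentially the same route as the paper: extend $f$ continuously to $\overline{\bC_0}$, show the boundary restriction is almost periodic (your ``three-term estimate'' is the paper's \cref{lemma:extension-is-ap}), approximate it uniformly on the imaginary axis by trigonometric polynomials whose frequencies lie in the correct half-line so that they lift to general Dirichlet polynomials, and then transfer uniform convergence from $\partial\bC_0$ to $\bC_0$ by the maximum modulus principle (which is what your Phragm\'en--Lindel\"of step amounts to). The one place you diverge: where the paper simply cites Besicovitch (Chapter~1, \S9, Theorem~4 for Bochner--Fej\'er approximation and Chapter~3, \S3, Theorem~8 for the one-sided spectrum), you make the spectrum localization self-contained by integrating $f(s)e^{-\mu s}$ over tall rectangles and using boundedness of $f$ to deduce $a_\sigma(\mu)=C(\mu)e^{\mu\sigma}$ and hence $C(\mu)=0$ for $\mu>0$. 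That contour argument is exactly the classical proof of the cited Besicovitch result, and indeed mirrors the Cauchy-theorem computation the paper already performs when showing the Bohr coefficients $a_\lambda(f)$ are independent of $\sigma$; so you gain self-containedness at the cost of a few extra lines, with no change in the underlying mechanism.
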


We remark that, unlike the proof of Aron, Bayart, Gauthier, Maestre, and Nestoridis for the analogous result on $\sA(\bC_0)$, our proof does not rely on Bohr's theorem, but instead only on standard approximation results in the theory of almost periodic functions. This approach can also be used to give an alternative proof of their result.

Our final consideration concerns composition operators on $H^\infty_\ap(\bC_0)$, as well as on subspaces of it. In \cite{bayart_hardy_2002}, Bayart completely characterized the symbols $\varphi:\bC_0\to\bC_0$ that induce bounded composition operators $C_\varphi:\sH^\infty\to\sH^\infty$. Bayart showed that these are precisely the functions of the form $\varphi(s)=c_0s+\psi(s)$ for some non-negative integer $c_0$ and some analytic $\psi$ that can be represented as a Dirichlet series that converges uniformly on $\bC_\kappa$ for all $\kappa>0$ (see also \cite{bayart_composition_2021}*{Theorem 2.5}). We prove the following result for $H^\infty_\ap(\bC_0)$.

\begin{restatable}{theorem}{boundedcomposition}\label{thm:bounded-composition-characterization}
    Let $\varphi:\bC_0\to\bC_0$ be a function. The following are equivalent:
    \begin{enumerate}[(i)]
        \item\label{item:bounded-composition} $\varphi$ induces a bounded composition operator $C_\varphi:H^\infty_\ap(\bC_0)\to H^\infty_\ap(\bC_0)$.
        \item\label{item:composition-is-in-hinftyap} $f\circ\varphi\in H^\infty_\ap(\bC_0)$ for all $f\in H^\infty_\ap(\bC_0)$.
        \item\label{item:exponentials-are-in-hinftyap} $e^{-\lambda\varphi}\in H^\infty_\ap(\bC_0)$ for all $\lambda>0$.
        \item\label{item:exponential-and-analytic} $e^{-\lambda\varphi}\in H^\infty_\ap(\bC_0)$ for some $\lambda>0$, and $\varphi$ is analytic.
        \item\label{item:phi-of-form} $\varphi$ is of the form $\varphi(s)=as+\psi(s)$ for some $a\geq0$ and some analytic function $\psi$ that is almost periodic on $\bC_\kappa$ for all $\kappa>0$.
    \end{enumerate}
\end{restatable}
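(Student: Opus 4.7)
My plan is to establish the cycle (i) $\Rightarrow$ (iii) $\Rightarrow$ (iv) $\Rightarrow$ (v) $\Rightarrow$ (ii) $\Rightarrow$ (i). The two trivial links are (ii) $\Rightarrow$ (i), since composition with $\varphi$ is a contraction in the supremum norm, and (i) $\Rightarrow$ (iii), since each $e^{-\lambda s}$ belongs to $H^\infty_\ap(\bC_0)$ for $\lambda>0$. For (iii) $\Rightarrow$ (iv) I need only recover the analyticity of $\varphi$: picking $\lambda_1,\lambda_2>0$ with $\lambda_2/\lambda_1$ irrational, the map $\Phi(z)=(e^{-\lambda_1 z},e^{-\lambda_2 z})$ is a continuous injection of $\bC$ into $\bC^2$ and a local homeomorphism onto its image, so $\varphi=\Phi^{-1}\circ(e^{-\lambda_1\varphi},e^{-\lambda_2\varphi})$ is continuous; a local analytic branch of $-\lambda_1^{-1}\log(e^{-\lambda_1\varphi})$ then differs from $\varphi$ by a locally constant element of $(2\pi i/\lambda_1)\bZ$, giving analyticity on all of $\bC_0$.

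The heart of the proof is (iv) $\Rightarrow$ (v), approached via a Bohl-type mean motion decomposition for analytic almost periodic functions. Set $F:=e^{-\lambda\varphi}$; by \cref{cor:bohr-analogue}, $F$ is almost periodic on every $\bC_\kappa$ with $\kappa>0$. Since $F$ is nonvanishing on the simply connected $\bC_0$, fix an analytic branch $G:=\log F$. The classical mean motion argument on each $\bC_\kappa$ yields $G(s)=c_\kappa s+g_\kappa(s)$, where $g_\kappa$ is bounded almost periodic and $c_\kappa$ is the common mean motion of $\arg F$ along vertical lines. Comparing decompositions on overlaps $\bC_{\max(\kappa,\kappa')}$ forces $c_\kappa=c_{\kappa'}=:c$ (otherwise $(c_\kappa-c_{\kappa'})s$ would be linear yet equal to a bounded function), and the $g_\kappa$ patch into a single function $g$ on $\bC_0$ almost periodic on every $\bC_\kappa$. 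The identity $\log\abs{F}=\Re c\cdot\sigma-\Im c\cdot t+\Re g$, combined with the boundedness of $F$ on every $\bC_\kappa$, forces $c\in\bR$ and $c\leq 0$. Setting $a:=-c/\lambda\geq 0$ and $\psi:=-(g+2\pi ik)/\lambda$ for the constant integer $k$ reconciling the branch of $G$ with $-\lambda\varphi$ completes the decomposition.

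For (v) $\Rightarrow$ (ii), write $\varphi=as+\psi$ and let $f\in H^\infty_\ap(\bC_0)$. Boundedness and analyticity of $f\circ\varphi$ are immediate, so only almost periodicity is at stake. The key reduction is that $\varphi(\bC_\kappa)\subseteq\bC_{\kappa'}$ for some $\kappa'>0$ once $\kappa$ is large. When $a>0$ this follows from $\Re\varphi\geq a\kappa-\norm{\psi}_\infty$; when $a=0$, strict positivity of $\Re\psi$ on $\bC_0$ forces $\Re M(\psi)>0$ (a strictly positive almost periodic function on a line has strictly positive mean), and uniform Bohr-Fej\'er approximation gives $\psi(\sigma+it)\to M(\psi)$ uniformly in $t$ as $\sigma\to\infty$, so $\Re\psi>\Re M(\psi)/2$ on $\bC_\kappa$ for large $\kappa$. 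Applying \cref{thm:finite-joint-ap} to the finite family $\{\psi,\,s\mapsto f(as)\}$ produces, for any $\varepsilon,\delta>0$, a relatively dense set of $\tau$ that are simultaneously $\delta$-almost periods of $\psi$ and for which $a\tau$ is an $\varepsilon$-almost period of $f$ on $\bC_{\kappa'}$. A triangle inequality with middle term $f(as+\psi(s+i\tau))$, combined with uniform continuity of $f$ on $\bC_{\kappa'}$, bounds $\abs{f(\varphi(s+i\tau))-f(\varphi(s))}$ by $\varepsilon+\omega_f(\delta)$, where $\omega_f$ denotes the modulus of continuity of $f$ on $\bC_{\kappa'}$; this can be made arbitrarily small.

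The principal obstacle is the mean motion decomposition in (iv) $\Rightarrow$ (v), where the classical Bohl-Jessen structure theorem has to be carried over from real-variable almost periodic functions to analytic almost periodic functions on a half-plane, with careful bookkeeping both about the constancy of the mean motion across vertical lines and about a uniform choice of branch of the logarithm.
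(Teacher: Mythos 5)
Your cycle of implications matches the paper's (the paper goes $(ii)\Leftrightarrow(i)$, $(ii)\Rightarrow(iii)\Rightarrow(iv)\Rightarrow(v)\Rightarrow(ii)$), and the technical content of $(iv)\Rightarrow(v)\Rightarrow(ii)$ is in the right spirit, though with some genuine differences from the paper. For $(iv)\Rightarrow(v)$ you apply a Bohl-type mean motion decomposition to $\log F$ directly on each half-plane $\bC_\kappa$, whereas the paper invokes Bochner's factorization $F=e^{-\lambda_0 s}g$ once and then extends the half-plane of almost periodicity of $\psi$ via a Schottky-type argument (\cref{thm:strong-bohr}); these are two bookkeepings of the same classical theorem, and yours has the minor virtue of sidestepping the extension step. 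For $(v)\Rightarrow(ii)$ you only need almost periodicity on a single large half-plane and use the mean $M(\psi)$ to produce the shift $\kappa'$, whereas the paper proves it on every half-plane via \cref{lemma:psi-image} (vertical limit functions); both work, and your observation that a strictly positive almost periodic function has strictly positive mean is correct, though it deserves a sentence of proof.

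There is, however, a genuine gap in your $(iii)\Rightarrow(iv)$ step. The map $\Phi(z)=(e^{-\lambda_1 z},e^{-\lambda_2 z})$ is indeed injective and a local homeomorphism onto its image, but it is not proper and hence is \emph{not} a global homeomorphism onto its image: since $\lambda_2/\lambda_1$ is irrational, one can choose $t_n\to\infty$ with $\Phi(it_n)\to\Phi(0)$, so $\Phi^{-1}$ fails to be continuous at $(1,1)$. Thus the assertion that $\varphi=\Phi^{-1}\circ(e^{-\lambda_1\varphi},e^{-\lambda_2\varphi})$ is continuous does not follow, and your next sentence (the difference from a local branch is locally constant) presupposes exactly the continuity you were trying to establish, so the argument as written is circular. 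The idea is salvageable: fix $s_0$, take an analytic local branch $h$ of $-\lambda_1^{-1}\log(e^{-\lambda_1\varphi})$ near $s_0$ with $h(s_0)=\varphi(s_0)$, and observe that $E(s):=e^{-\lambda_2(\varphi(s)-h(s))}=e^{-\lambda_2\varphi(s)}e^{\lambda_2 h(s)}$ is analytic (hence continuous) independently of any regularity of $\varphi$, takes values in the countable set $\{e^{-2\pi i k\lambda_2/\lambda_1}:k\in\bZ\}$, and therefore is locally constant by connectedness; since $E(s_0)=1$ and $k\mapsto e^{-2\pi i k\lambda_2/\lambda_1}$ is injective by irrationality, $\varphi\equiv h$ near $s_0$, giving analyticity. (The paper simply defers this step to an adaptation of \cite{bayart_composition_2021}*{Lemma 1}.)
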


In \cite{contreras_composition_2024}, Contreras, Gómez-Cabello, and Rodríguez-Piazza characterized the symbols $\varphi:\bC_0\to\bC_0$ inducing bounded composition operators $C_\varphi:\sA(\bC_0)\to\sA(\bC_0)$. We prove an analogous result characterizing the bounded composition operators on $A_\ap(\bC_0)$ (see \cref{thm:bounded-composition-characterization-algebra} below).

We also investigate the matter of compactness. For $\sH^\infty$, this was done in \cite{bayart_hardy_2002}, where it was shown that if $\varphi$ induces a bounded composition operator $C_\varphi:\sH^\infty\to\sH^\infty$, then $C_\varphi$ is compact if and only if $\varphi(\bC_0)\subseteq\bC_\kappa$ for some $\kappa>0$. For $H^\infty_\ap(\bC_0)$, the characterization is similar but not the same. In particular, we show the following.

\begin{restatable}{theorem}{compactcomposition}\label{thm:compact-composition}
    Let $\varphi:\bC_0\to\bC_0$ induce a bounded composition operator $C_\varphi:H^\infty_\ap(\bC_0)\to H^\infty_\ap(\bC_0)$. The following are equivalent:
    \begin{enumerate}[(i)]
        \item\label{item:compact} $C_\varphi$ is compact.
        \item\label{item:compactly-contained} $\varphi(\bC_0)$ is compactly contained in $\bC_0$.
    \end{enumerate}
\end{restatable}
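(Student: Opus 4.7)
My plan is to prove the two implications separately. The direction (ii) $\Rightarrow$ (i) follows quickly from the classical Montel theorem, while (i) $\Rightarrow$ (ii) is a contrapositive argument using the one-parameter family $\{e^{-\lambda s}\}_{\lambda>0}$ of norm-one elements of $H^\infty_\ap(\bC_0)$ as test functions. The guiding idea for the harder direction is that the behavior of $e^{-\lambda\varphi(s)}$, as $\lambda$ varies, witnesses precisely the two ways $\varphi(\bC_0)$ can fail to be compactly contained in $\bC_0$, namely $\inf_{\bC_0}\Re\varphi=0$ or $\varphi$ unbounded on $\bC_0$.

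For (ii) $\Rightarrow$ (i), let $K:=\overline{\varphi(\bC_0)}$, which is compact in $\bC_0$ by hypothesis. Given a norm-bounded sequence $\{f_n\}$ in $H^\infty_\ap(\bC_0)$, the classical Montel theorem furnishes a subsequence $\{f_{n_k}\}$ converging uniformly on compact subsets of $\bC_0$ to some analytic function $f$. In particular $f_{n_k}\to f$ uniformly on $K$, so $C_\varphi f_{n_k}=f_{n_k}\circ\varphi$ converges uniformly on all of $\bC_0$ to $f\circ\varphi$. Since each $C_\varphi f_{n_k}$ lies in the closed subspace $H^\infty_\ap(\bC_0)$, so does the limit, and $C_\varphi$ is compact.

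For (i) $\Rightarrow$ (ii) I argue by contrapositive. If $\inf_{\bC_0}\Re\varphi=0$, test with $f_n(s)=e^{-ns}$: for any subsequence, $C_\varphi f_{n_k}=e^{-n_k\varphi}$ tends pointwise to $0$ on $\bC_0$ while $\norm{e^{-n_k\varphi}}_\infty=e^{-n_k\inf_{\bC_0}\Re\varphi}=1$ for every $k$, preventing norm convergence. Otherwise $\inf_{\bC_0}\Re\varphi>0$ but $\varphi$ is unbounded on $\bC_0$; test instead with $g_n(s)=e^{-s/n}$, so that $e^{-\varphi/n}\to 1$ pointwise and any uniformly convergent subsequence of $\{C_\varphi g_{n_k}\}$ must tend to the constant $1$. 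If $\sup_{\bC_0}\Re\varphi=\infty$, choose $s_k$ with $\Re\varphi(s_k)\geq n_k$ to obtain $\abs{1-e^{-\varphi(s_k)/n_k}}\geq 1-e^{-1}$. If instead $\Re\varphi$ is bounded and $\abs{\Im\varphi}$ is unbounded on $\bC_0$, then $\Im\varphi(\bC_0)$ is a connected subset of $\bR$ unbounded in at least one direction, so for large $k$ I may choose $s_k$ with $\Im\varphi(s_k)=\pm\pi n_k$; then $\varphi(s_k)/n_k\to\pm i\pi$, giving $e^{-\varphi(s_k)/n_k}\to -1$ and $\abs{1-e^{-\varphi(s_k)/n_k}}\to 2$. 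In each case $\norm{1-C_\varphi g_{n_k}}_\infty\not\to 0$, contradicting compactness.

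The main obstacle is this last subcase, in which $\abs{e^{-\varphi/n_k}}$ stays uniformly close to $1$ so that the contradiction cannot come from the modulus and must instead be extracted from oscillation of the phase; pinning $\Im\varphi(s_k)/n_k$ near an odd multiple of $\pi$ via connectedness of the range of $\Im\varphi$ is the key step. This phenomenon has no counterpart in Bayart's $\sH^\infty$ compactness criterion, which requires only $\inf_{\bC_0}\Re\varphi>0$, and reflects the additional sensitivity of $H^\infty_\ap(\bC_0)$ to translations of $\varphi$ in the imaginary direction.
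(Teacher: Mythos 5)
Your proof is correct and follows essentially the same strategy as the paper: the classical Montel theorem plus compact containment handles (ii)$\Rightarrow$(i), and the hard direction tests $C_\varphi$ against the families $\{e^{-ns}\}$ and $\{e^{-s/n}\}$ (the paper uses $e^{-s/n}-1$ for the latter), splitting into cases according to how compact containment can fail. The one cosmetic difference worth noting is the order of the cases: the paper proves $\Im\varphi$ bounded first, independently of $\Re\varphi$, by picking $\abs{\Im\varphi(s_n)}$ to be an odd multiple of $\pi n$ so that the relevant cosine equals $-1$ regardless of $\Re\varphi(s_n)$; you instead handle the imaginary part last and lean on the already-established boundedness of $\Re\varphi$ to conclude $\varphi(s_k)/n_k\to\pm i\pi$, which is equally valid.
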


For subspaces of $H^\infty_\ap(\bC_0)$ where uniform boundedness implies joint almost periodicity on some half-plane---such as $\sH^\infty$---the result is in direct analogue with Bayart's result for $\sH^\infty$. If $W$ is a subspace of $H^\infty_\ap(\bC_0)$ with the property that any uniformly bounded family in $W$ is jointly almost periodic on $\bC_\kappa$ for some $\kappa>0$, and $\varphi$ is a symbol inducing a bounded composition operator $C_\varphi:W\to H^\infty_\ap(\bC_0)$, then we show that the assumption that $\varphi(\bC_0)\subseteq\bC_\kappa$ for some $\kappa>0$ is sufficient for $C_\varphi$ to be compact (see \cref{thm:compact-composition-w-sufficiency} below), and if $W$ is large enough in a certain sense, then this condition is also necessary (see \cref{thm:compact-composition-w-necessity} below). The proof of this is a direct application of \cref{thm:strong-montel}, analogous to how Bayart used his Montel theorem in characterizing the compact composition operators on $\sH^\infty$.

\subsection*{Acknowledgments}
I would like thank my PhD supervisor, Ole Fredrik Brevig, for introducing me to the topic of analytic almost periodic functions, and for his constant feedback in the writing of this paper.

\subsection*{Organization}
The remainder of this paper is divided into three sections. \Cref{sec:joint-ap-and-montel} starts by discussing some basic results on jointly almost periodic families, then moves on to the proof of \cref{thm:strong-montel}, and finishes with a proof of how the Montel theorem of Defant, Vidal, Schoolmann, and Sevilla-Peris follows from \cref{thm:strong-montel}. In \cref{sec:ap-algebra}, the basic properties of $A_\ap(\bC_0)$ are discussed, culminating in a proof of \cref{thm:ap-algebra-dirichlet}. The final section, \cref{sec:composition-operators}, deals with boundedness and compactness of composition operators on $H^\infty_\ap(\bC_0)$ and certain subspaces of it, and includes the proofs of \cref{thm:bounded-composition-characterization,thm:compact-composition}.

\section{Joint almost periodicity and a strong Montel theorem}\label{sec:joint-ap-and-montel}

We start by introducing some notation. For $\tau\in\bR$, we shall write $V_\tau f$ for the \emph{vertical translation}
$$V_\tau f(s)=f(s+i\tau).$$
If $f$ is almost periodic, then we denote by $d_f(\varepsilon)$ the minimal $d\geq0$ such that $E_f(\varepsilon)$ intersects any closed interval of length $d$. Similarly, for a jointly almost periodic family $\cF$, we denote by $d_\cF(\varepsilon)$ the minimal $d\geq0$ such that $E_\cF(\varepsilon)$ intersects any closed interval of length $d$. An element of $E_f(\varepsilon)$ will be called an \emph{$\varepsilon$-translation number of $f$}, and an element of $E_\cF(\varepsilon)$ will be called a \emph{common $\varepsilon$-translation number of $\cF$}.

Crucial to several of the arguments in this text is the fact that any finite family of almost periodic functions is jointly almost periodic. This is not as trivial as one may assume, as in general the intersection of two relatively dense set is not necessarily relatively dense; for example $\bQ$ and $\bR\setminus\bQ$ are both relatively dense but have empty intersection. To prove this, one can adapt the argument in \cite{besicovitch_almost_1955}*{Chapter 1, §1, Theorem 11}.

\begin{theorem}\label{thm:finite-joint-ap}
    Any finite collection of almost periodic functions on a half-plane $\bC_\kappa$ is jointly almost periodic.
\end{theorem}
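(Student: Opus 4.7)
The plan is to prove the theorem via a Bochner-type criterion for almost periodicity on a half-plane, namely: a bounded, uniformly continuous function $f$ on $\bC_\kappa$ is almost periodic if and only if the orbit $\{V_\tau f : \tau \in \bR\}$ is totally bounded in $C_b(\bC_\kappa)$ with the uniform norm. I would first establish this criterion. The forward direction uses that, given $\varepsilon > 0$, every $V_\tau f$ lies within $\varepsilon$ of some $V_{\tau_0} f$ with $\tau_0 \in [0, d_f(\varepsilon)]$---simply select $\tau_0$ so that $\tau - \tau_0 \in E_f(\varepsilon)$---combined with total boundedness of the reference family $\{V_{\tau_0} f : \tau_0 \in [0, d_f(\varepsilon)]\}$, which follows from compactness of $[0, d_f(\varepsilon)]$ together with the uniform continuity of $f$ on $\bC_\kappa$ (an Arzelà--Ascoli type argument). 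For the converse, a finite $\varepsilon$-net $\{V_{\tau_k} f\}_{k=1}^m$, together with the translation identity $\|V_\tau f - V_{\tau_k} f\|_\infty = \|V_{\tau - \tau_k} f - f\|_\infty$, gives $\bR = \{\tau_1, \dots, \tau_m\} + E_f(\varepsilon)$, and hence $E_f(\varepsilon)$ meets every interval of length at most $\max_{j,k} |\tau_j - \tau_k|$.

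Granted the criterion, the theorem follows from a standard diagonal argument. Consider the vector-valued orbit $G_\tau = (V_\tau f_1, \dots, V_\tau f_n)$ in $C_b(\bC_\kappa)^n$ with the sup-norm product topology. From any sequence $\{\tau_k\}$, I would successively extract subsequences on which each $V_\tau f_j$ converges uniformly (applying the criterion to each $f_j$ in turn), ending with a subsequence on which $G_\tau$ converges in the product. Hence $\{G_\tau\}$ is totally bounded in $C_b(\bC_\kappa)^n$, and applying the same net-based reasoning as in the converse of the criterion, for every $\tau \in \bR$ there is some index $k$ with $\|V_\tau f_j - V_{\tau_k} f_j\|_\infty \leq \varepsilon$ for all $j$ simultaneously. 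This is precisely $\tau - \tau_k \in E_\cF(\varepsilon)$ for $\cF = \{f_1, \dots, f_n\}$, so $\bR = \{\tau_1, \dots, \tau_m\} + E_\cF(\varepsilon)$, which forces $E_\cF(\varepsilon)$ to be relatively dense with inclusion length at most $\max_{j,k} |\tau_j - \tau_k|$.

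The main obstacle is proving the Bochner criterion in the half-plane setting, as one must lift the classical argument (e.g., the one underlying Besicovitch's Ch.~1, §1, Theorem 11) from functions on $\bR$ to functions on $\bC_\kappa$. The key point is that $\sup_{s \in \bC_\kappa} |f(s + i\tau) - f(s + i\tau')|$ is controlled uniformly by $|\tau - \tau'|$ through the uniform continuity of $f$ on the entire half-plane, so the Arzelà--Ascoli step and the translation identity both carry over without difficulty. Once the criterion is established, the diagonal extraction and the passage from total boundedness to relative density are both routine; in particular, the crucial feature that makes the joint statement work---and that fails for arbitrary relatively dense sets such as $\bQ$ and $\bR \setminus \bQ$---is the rich compactness structure supplied by almost periodicity.
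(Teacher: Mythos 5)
Your proof is correct, and it takes a genuinely different route from the one the paper invokes. The paper disposes of this statement by a one-line pointer to Bohr's direct argument as presented in Besicovitch (Chapter 1, \S1, Theorem 11), which establishes relative density of the common translation numbers by a combinatorial/pigeonhole argument on long intervals, working straight from Bohr's definition. You instead pass through Bochner's normality criterion (that $f$ bounded and uniformly continuous on $\bC_\kappa$ is almost periodic if and only if the orbit $\{V_\tau f : \tau\in\bR\}$ is totally bounded in the uniform norm), and then get the joint statement essentially for free by considering the vector-valued orbit $\tau\mapsto(V_\tau f_1,\dots,V_\tau f_n)$ and a finite extraction of subsequences. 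Both directions of your criterion are argued correctly: the key translation identity $\norm{V_\tau f - V_{\tau_0} f}_\infty=\norm{V_{\tau-\tau_0}f-f}_\infty$ holds because $\bC_\kappa$ is invariant under vertical shifts, the Arzel\`a--Ascoli step uses only uniform continuity of $f$ on the half-plane, and the passage from a finite $\varepsilon$-net $\{\tau_1,\dots,\tau_m\}$ to relative density of $E_\cF(\varepsilon)$ via $\bR=\bigcup_k(\tau_k+E_\cF(\varepsilon))$ is sound (choosing the shift to be the largest $\tau_j$ even yields your stated inclusion length $\max_{j,k}\abs{\tau_j-\tau_k}$). What your approach buys is conceptual economy and a very clean joint statement; it also dovetails with the Bochner-type machinery the paper already uses elsewhere (e.g.\ in the discussion of vertical limit functions, citing Besicovitch Chapter~1, \S2). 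What the paper's route buys is self-containment at the level of Bohr's definition alone, without first establishing the equivalence of the two definitions of almost periodicity. Either is an acceptable proof; they are simply different standard routes to the same classical fact.
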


As was mentioned in the introduction, it is the case that any function in $H^\infty_\ap(\bC_0)$ is almost periodic on $\bC_\kappa$ for all $\kappa>0$. We even have the following stronger result on extending the joint almost periodicity of a family from one half-plane $\bC_\kappa$ with $\kappa>0$ to all half-planes $\bC_\kappa$ with $\kappa>0$.

\begin{theorem}\label{thm:extend-joint-ap}
    Let $\cF$ be a family in $H^\infty(\bC_0)$ that is jointly almost periodic on some half-plane $\bC_\kappa$ with $\kappa>0$. Then $\cF$ is jointly almost periodic on all half-planes $\bC_\kappa$ with $\kappa>0$.
\end{theorem}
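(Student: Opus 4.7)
The plan is to apply Hadamard's three-lines theorem to each difference $V_\tau f - f$ in order to transfer common $\varepsilon$-translation numbers between half-planes. Write $\bC_{\kappa_0}$ for the half-plane on which $\cF$ is given to be jointly almost periodic. Passing to a larger half-plane is immediate: if $\kappa \geq \kappa_0$ then $\bC_\kappa \subseteq \bC_{\kappa_0}$, so every common $\varepsilon$-translation number of $\cF$ on $\bC_{\kappa_0}$ is automatically one on $\bC_\kappa$, and relative density is inherited. The substantive case is $0 < \kappa < \kappa_0$.

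Fix such a $\kappa$ and put $\theta = \kappa/\kappa_0 \in (0,1)$. For any $f \in \cF$ and $\tau \in \bR$, the function $V_\tau f - f$ belongs to $H^\infty(\bC_0)$ with norm at most $2\|f\|_\infty$. Applying Hadamard's three-lines theorem on the strip $\{0 < \Re s < \kappa_0\}$ and then the maximum modulus principle on the half-plane $\bC_\kappa$ yields
$$\sup_{s \in \bC_\kappa} \abs{V_\tau f(s) - f(s)} \leq (2\|f\|_\infty)^{1-\theta} \left(\sup_{s \in \bC_{\kappa_0}} \abs{V_\tau f(s) - f(s)}\right)^{\theta}.$$
When $\cF$ is uniformly bounded with $M := \sup_{f \in \cF} \|f\|_\infty$---the setting in which this theorem is applied in the paper, for instance inside the proof of \cref{thm:strong-montel}---this estimate is uniform in $f$. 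Given any $\varepsilon' > 0$, I can then choose $\varepsilon > 0$ so that $(2M)^{1-\theta}\varepsilon^\theta \leq \varepsilon'$, which shows that every common $\varepsilon$-translation number of $\cF$ on $\bC_{\kappa_0}$ is a common $\varepsilon'$-translation number of $\cF$ on $\bC_\kappa$. Since the former set is relatively dense by hypothesis, so is the latter, and joint almost periodicity on $\bC_\kappa$ follows.

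The main technical obstacle I foresee is the dependence of the three-lines estimate on the individual norms $\|f\|_\infty$. Without an a priori uniform bound on $\cF$, the single choice of $\varepsilon$ cannot be made to work simultaneously for all $f \in \cF$, and a more delicate argument would be required---for instance, first extracting uniform boundedness of $\cF$ over an intermediate half-plane directly from the jointly almost periodic hypothesis (via a Bohr-type mean-value argument), and only then invoking the three-lines estimate.
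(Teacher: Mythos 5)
Your three-lines strategy is exactly the approach the paper has in mind (the paper points to Besicovitch's proof, which runs on the Hadamard three-lines theorem), and for a \emph{uniformly bounded} family your argument is correct and complete. However, the issue you raise at the end is more serious than a technical obstacle to be worked around: there is no general way to extract uniform boundedness from the joint almost periodicity hypothesis, and the statement of \cref{thm:extend-joint-ap} is in fact false without it. Consider $\cF=\{f_n\}_{n\in\bZ^+}$ with $f_n(s)=n^{1-2s}$. Each $f_n\in H^\infty(\bC_0)$ with $\norm{f_n}_\infty=n$, and each $f_n$ is periodic, hence almost periodic on every $\bC_\kappa$. Since $\sup_{s\in\bC_1}\abs{V_\tau f_n(s)-f_n(s)}=n^{-1}\abs{e^{-2i\tau\log n}-1}\le 2/n$, the set $E_{f_n}(\varepsilon)$ taken on $\bC_1$ equals all of $\bR$ for every $n>2/\varepsilon$; combining this with \cref{thm:finite-joint-ap} applied to the remaining finitely many functions shows that $\cF$ is jointly almost periodic on $\bC_1$. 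On $\bC_{1/2}$, however, $\sup_{s\in\bC_{1/2}}\abs{V_\tau f_n(s)-f_n(s)}=\abs{e^{-2i\tau\log n}-1}$, and for any $\tau\neq 0$ the sequence $\{2\tau\log n\bmod 2\pi\}_{n\ge 2}$ is dense in $[0,2\pi)$ because it tends to infinity with increments tending to zero; hence $\sup_n\abs{e^{-2i\tau\log n}-1}=2$, so $E_\cF(\varepsilon)=\{0\}$ for every $\varepsilon<2$, and $\cF$ is not jointly almost periodic on $\bC_{1/2}$.

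Note that your proposed remedy---extracting uniform boundedness on an intermediate half-plane from the hypothesis---cannot rescue the statement: in the example above $\cF$ \emph{is} uniformly bounded on $\bC_{1/2}$ and on $\bC_1$, but the three-lines estimate has to anchor on a vertical line strictly to the \emph{left} of the target abscissa $\kappa$, where $\sup_n\norm{f_n}_{\bC_{\kappa'}}=\infty$ for every $\kappa'<1/2$. The correct fix is to add the hypothesis that $\cF$ is uniformly bounded on $\bC_0$ (or, for a given target $\kappa$, on some $\bC_{\kappa'}$ with $\kappa'<\kappa$); under that hypothesis your proof is exactly right. This is harmless for the rest of the paper, since every place \cref{thm:extend-joint-ap} is invoked---notably in establishing \labelcref{item:jointly-ap-somewhere}$\Rightarrow$\labelcref{item:jointly-ap-everywhere} in \cref{thm:strong-montel}---the family is already assumed uniformly bounded, and \cref{cor:bohr-analogue} concerns a single function where the issue does not arise.
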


The result is a consequence of the Hadamard three-lines theorem, and the proof is a simple adaptation Besicovitch's proof of \cite{besicovitch_almost_1955}*{Chapter 3, §2, Theorem 4}. We leave the details to the reader.

\begin{remark}
    Using boundedness on the entire right half-plane, one can easily weaken the assumption in \cref{thm:extend-joint-ap} to only assuming joint almost periodicity on a single vertical line $\Re s=\sigma$.
\end{remark}

Taking now the family in the theorem to only consist of one function, we obtain our analogue result to Bohr's theorem, which was mentioned in the introduction.

\begin{corollary}\label{cor:bohr-analogue}
    If $f\in H^\infty_\ap(\bC_0)$, then $f$ is almost periodic on $\bC_\kappa$ for all $\kappa>0$.
\end{corollary}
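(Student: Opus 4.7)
The plan is to deduce this as an immediate corollary of \Cref{thm:extend-joint-ap} applied to the singleton family $\cF=\{f\}$. Since $f\in H^\infty_\ap(\bC_0)$ is almost periodic on some half-plane $\bC_{\kappa_0}$ with $\kappa_0>0$, and since
$$E_{\{f\}}(\varepsilon)=\bigcap_{g\in\{f\}}E_g(\varepsilon)=E_f(\varepsilon),$$
the family $\{f\}$ is trivially jointly almost periodic on $\bC_{\kappa_0}$: the defining relative denseness of $E_{\{f\}}(\varepsilon)$ for every $\varepsilon>0$ is nothing other than the almost periodicity of $f$ itself. Moreover, $f\in H^\infty_\ap(\bC_0)\subseteq H^\infty(\bC_0)$, so the ambient hypothesis of \Cref{thm:extend-joint-ap} is met.

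Applying \Cref{thm:extend-joint-ap} then yields joint almost periodicity of $\{f\}$ on every $\bC_\kappa$ with $\kappa>0$, which by the very definition of joint almost periodicity requires each member of the family---here just $f$---to be almost periodic on the corresponding half-plane. This is precisely the conclusion.

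There is essentially no obstacle at this stage, since all the nontrivial content---the Hadamard three-lines / Besicovitch-style argument propagating relative denseness of the translation-number sets between half-planes---has been absorbed into the preceding theorem, and the corollary is merely the reading of the singleton case. If I were to unpack that step, the only thing worth watching would be that the global boundedness of $f$ on $\bC_0$ (and not merely on $\bC_{\kappa_0}$) is what makes the three-lines estimate useful in order to pass to $\bC_\kappa$ with $0<\kappa<\kappa_0$; for $\kappa\geq\kappa_0$ the statement is a direct restriction.
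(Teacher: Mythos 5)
Your argument is exactly the paper's: the corollary is obtained by applying \Cref{thm:extend-joint-ap} to the singleton family $\{f\}$, for which joint almost periodicity reduces to almost periodicity of $f$ itself. Correct, and identical in approach.
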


In \cite{brevig_extension_2024}, Brevig and Kouroupis extended Bohr's theorem by showing that if $f$ is an analytic function on some half-plane $\bC_\kappa$ which can be represented as a convergent Dirichlet series on some half-plane $\bC_\theta$ with $\theta>\kappa$, and $f(\bC_\kappa)\subseteq\bC\setminus\{\alpha,\beta\}$ for some complex numbers $\alpha\neq\beta$, then the Dirichlet series of $f$ converges uniformly on $\bC_\nu$ for all $\nu>\kappa$. They prove this by showing that such an $f$ must be bounded on all half-planes $\bC_\nu$ with $\nu>\kappa$, and then use Bohr's theorem to arrive at the conclusion. Using \cref{cor:bohr-analogue}, we can prove an analogous result using the same argument. The main tool for the proof is the following quantitative version of Schottky's theorem due to Ahlfors; see \cite{ahlfors_extension_1938}*{Theorem B} for a proof.

\begin{theorem}[Schottky]\label{thm:schottky}
    Let $f$ be an analytic function defined on the open ball $B_r(c)$ in $\bC$, and suppose that $f(B_r(c))\subseteq\bC\setminus\{0,1\}$. Then
    $$\log\abs{f(s)}\leq\frac{r+\abs{s-c}}{r-\abs{s-c}}(7+\log^+\abs{f(c)})$$
    for all $s\in B_r(c)$, where $\log^+x=\max\{0,\log x\}$ for $x>0$.
\end{theorem}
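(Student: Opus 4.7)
The plan is to reduce to the unit disk and then exploit the universal covering structure of the twice-punctured plane. By applying the affine change of variables $z \mapsto c + rz$, I would first normalize to the case $c = 0$, $r = 1$; it then suffices to show that if $f$ is analytic on $\bD$ with $f(\bD) \subseteq \bC \setminus \{0, 1\}$, then
$$\log\abs{f(z)} \leq \frac{1+\abs{z}}{1-\abs{z}}\bigl(7 + \log^+\abs{f(0)}\bigr)$$
for every $z \in \bD$. The bound in the theorem then follows by pulling back the argument by the map $w \mapsto (w-c)/r$, which is a biholomorphism sending $B_r(c)$ onto $\bD$.

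Next, let $\lambda \colon \mathbb{H} \to \bC \setminus \{0, 1\}$ be the modular elliptic function, which is a holomorphic universal cover of the twice-punctured plane by the upper half-plane. Since $\bD$ is simply connected, $f$ admits an analytic lift $g \colon \bD \to \mathbb{H}$ with $\lambda \circ g = f$. The Schwarz--Pick lemma then gives the hyperbolic contraction
$$d_{\mathbb{H}}\bigl(g(z), g(0)\bigr) \leq d_{\bD}(z, 0) = \log\frac{1 + \abs{z}}{1 - \abs{z}},$$
which places $g(z)$ in an explicit hyperbolic disk about $g(0)$. Translating this to Euclidean coordinates yields controlled upper bounds on $\abs{g(z)}$ and corresponding lower bounds on $\Im g(z)$ in terms of $g(0)$ and $\abs{z}$.

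The final step is to convert this hyperbolic information about $g(z)$ into a pointwise estimate on $\abs{f(z)} = \abs{\lambda(g(z))}$. Using the $q$-expansion $\lambda(\tau) = 16\, e^{i\pi\tau} + O(e^{2i\pi\tau})$ valid as $\Im \tau \to \infty$, together with the transformation behavior of $\lambda$ under the level-two congruence subgroup (generated by $\tau \mapsto \tau + 2$ and $\tau \mapsto \tau/(1 - 2\tau)$), one obtains explicit growth estimates for $\log\abs{\lambda}$ on hyperbolic balls in $\mathbb{H}$. Feeding the bound on $d_{\mathbb{H}}(g(z), g(0))$ into such an estimate, with $\log^+\abs{\lambda(g(0))} = \log^+\abs{f(0)}$ on the right-hand side, produces the desired inequality. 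The main obstacle is tracking the absolute constants carefully enough to verify that $7$ is admissible; the cleanest way to sidestep the modular function entirely, as in Ahlfors's original argument, is to construct directly an ultrahyperbolic conformal metric of curvature at most $-1$ on $\bC \setminus \{0,1\}$ with explicit asymptotics near the punctures and at infinity, and then apply the Ahlfors--Schwarz lemma to $f$ to read off the constants.
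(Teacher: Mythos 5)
The paper does not give its own proof of this statement; it cites Ahlfors's 1938 paper (Theorem B there) and uses the result as a black box. So there is no ``paper's proof'' to compare against in detail, and your second suggested route---constructing an ultrahyperbolic metric on $\bC\setminus\{0,1\}$ with prescribed asymptotics at $0$, $1$, $\infty$ and applying the Ahlfors--Schwarz lemma---is precisely what the cited reference does. The normalization to the unit disk via an affine change of variables is correct and standard.

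That said, your write-up as it stands has a genuine gap: it sketches two strategies but carries neither one to the point of producing the inequality, and the entire content of this version of Schottky's theorem is the explicit numerical constant $7$ together with the sharp conformal factor $\frac{1+|z|}{1-|z|}$. The first route (lift through the modular $\lambda$-function, apply Schwarz--Pick, transfer hyperbolic balls in $\mathbb{H}$ into Euclidean estimates on $|\lambda|$) does prove a qualitative Schottky theorem cleanly, but extracting a clean absolute constant from the $q$-expansion of $\lambda$ and its behavior under $\Gamma(2)$ is exactly the step you describe as ``the main obstacle'' and then do not carry out; in practice that route produces much larger constants unless one works quite hard. You correctly identify that Ahlfors's metric method is the right tool for the explicit $7$, but then you again stop short of exhibiting the metric (Ahlfors uses a comparison metric built from $\frac{1}{|w|(1+\log^2|w|)}$-type factors at each puncture) and of running the curvature and integration computations that yield the constant. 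In short: the conceptual outline is sound and matches the approach of the cited source, but the quantitative core---which is what the theorem actually asserts---is assumed rather than proved. To close the gap you would need to either reproduce Ahlfors's metric and the resulting differential inequality, or simply cite Ahlfors (1938), Theorem B, as the paper itself does.
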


Using this we can prove the following analogous result to the theorem of Brevig and Kouroupis.

\begin{theorem}\label{thm:strong-bohr}
    Let $f$ be an analytic function on a half-plane $\bC_\kappa$ such that $f(\bC_\kappa)\subseteq\bC\setminus\{\alpha,\beta\}$ for some complex numbers $\alpha\neq\beta$, and suppose $f$ is almost periodic on $\bC_\theta$ for some $\theta>\kappa$. Then $f$ is almost periodic on $\bC_\nu$ for all $\nu>\kappa$.
\end{theorem}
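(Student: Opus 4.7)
The plan is to follow the strategy of Brevig and Kouroupis: first use \cref{thm:schottky} to show that $f$ is bounded on every half-plane $\bC_\nu$ with $\nu > \kappa$, and then invoke \cref{cor:bohr-analogue} to upgrade almost periodicity on $\bC_\theta$ to almost periodicity on all such half-planes.

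After normalizing $f$ by the affine map $z \mapsto (z-\alpha)/(\beta-\alpha)$, we may assume $f(\bC_\kappa) \subseteq \bC \setminus \{0,1\}$. Set $M = \sup_{\bC_\theta}\abs{f}$, which is finite by almost periodicity. Fix $\nu$ with $\kappa < \nu < \theta$ (the case $\nu \geq \theta$ is immediate), fix any $c_0 > \theta$, and for an arbitrary $s_0 = \sigma_0 + it_0 \in \bC_\nu$ with $\sigma_0 < c_0$ consider the disc $B_r(c)$ where $c = c_0 + it_0$ and $r = c_0 - \kappa$. This disc lies in $\bC_\kappa$, contains $s_0$ (since $\abs{s_0 - c} = c_0 - \sigma_0 < c_0 - \kappa = r$), and satisfies $\abs{f(c)} \leq M$ as $c \in \bC_\theta$. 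Hence \cref{thm:schottky} yields
$$\log\abs{f(s_0)} \leq \frac{(c_0-\kappa)+(c_0-\sigma_0)}{(c_0-\kappa)-(c_0-\sigma_0)}\bigl(7+\log^+ M\bigr) \leq \frac{2c_0-\kappa-\nu}{\nu-\kappa}\bigl(7+\log^+ M\bigr),$$
a bound uniform in such $s_0$. Combined with the bound $M$ on $\bC_{c_0} \subseteq \bC_\theta$, this shows that $f$ is bounded on $\bC_\nu$.

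Given this boundedness, for any $\kappa' \in (\kappa,\theta)$ the translate $g(s) = f(s+\kappa')$ is bounded on $\bC_0$ (apply the previous step with $\nu = \kappa'$) and almost periodic on $\bC_{\theta-\kappa'}$ by hypothesis. Hence $g \in H^\infty_\ap(\bC_0)$, and \cref{cor:bohr-analogue} gives that $g$ is almost periodic on every $\bC_{\kappa_0}$ with $\kappa_0 > 0$, i.e., $f$ is almost periodic on $\bC_{\kappa''}$ for all $\kappa'' > \kappa'$. Since $\kappa'$ is arbitrary in $(\kappa,\theta)$, letting $\kappa' \downarrow \kappa$ finishes the proof. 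The main obstacle is the Schottky step, which is precisely what converts the value-omitting hypothesis into the boundedness required to apply \cref{cor:bohr-analogue}; the remaining arguments are routine.
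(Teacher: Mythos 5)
Your proof is correct and follows essentially the same approach as the paper: normalize so that $f$ omits $\{0,1\}$, apply Schottky (\cref{thm:schottky}) to bound $f$ on every $\bC_\nu$ with $\nu>\kappa$, and then invoke \cref{cor:bohr-analogue}. The only (welcome) difference is that you spell out the horizontal-translation step $g(s)=f(s+\kappa')$ needed to place yourself in $H^\infty_\ap(\bC_0)$ before applying the corollary, a step the paper leaves implicit.
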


\begin{proof}
    Assume without loss of generality that $\alpha=0$ and $\beta=1$. As $f$ is almost periodic on $\bC_\theta$, it is bounded there, and so $M=\sup_{s\in\bC_\theta}\abs{f(s)}$ is finite. Fix $\nu\in(\kappa,\theta)$. We show that $f$ is bounded on $\bC_\nu$. Take any $s=\sigma+it\in\bC_\nu\setminus\bC_\theta$, and apply \cref{thm:schottky} with $c=\theta+it$ and $r=\theta-\kappa$ to obtain
    $$\log\abs{f(s)}\leq\frac{2\theta-(\kappa+\sigma)}{\sigma-\kappa}(7+\log^+\abs{f(\theta+it)})\leq\frac{2(\theta-\kappa)}{\nu-\kappa}(7+\log^+ M).$$
    From this it follows that $f$ is bounded on $\bC_\nu$ for all $\nu>\kappa$, so by \cref{cor:bohr-analogue} it follows that $f$ is almost periodic on $\bC_\nu$ for all $\nu>\kappa$.
\end{proof}

We now prove \cref{thm:strong-montel}, which we restate for convenience.

\strongmontel*

\begin{proof}
    That \labelcref{item:jointly-ap-somewhere} and \labelcref{item:jointly-ap-everywhere} are equivalent follows from \cref{thm:extend-joint-ap}, and so we only show that \labelcref{item:jointly-ap-everywhere} and \labelcref{item:strong-montel} are equivalent.
    
    Suppose first that \labelcref{item:jointly-ap-everywhere} holds. Let $\{f_n\}_{n\in\bZ^+}$ be a sequence in $\cF$. By the classical Montel theorem (e.g. \cite{rudin_real_1987}*{Theorem 14.6}) we can find an $f\in H^\infty(\bC_0)$ and a subsequence that converges to $f$ uniformly on all compact subsets of $\bC_0$. By replacing our original sequence with this subsequence, we may assume it is $\{f_n\}_{n\in\bZ^+}$ that converges to $f$ in this manner. Let $\kappa>0$. We show first that $\{f_n\}_{n\in\bZ^+}$ is uniformly Cauchy on the line $\Re s=\kappa$. Set $\cF'=\{f\vert_{\bC_{\kappa/2}}:f\in\cF\}$, and let $\varepsilon>0$. As the sequence converges uniformly on compact sets, we can find some $N\in\bZ^+$ such that
    \begin{equation}\label{eq:uniformly-cauchy-inteval}
        \max_{-d_{\cF'}(\varepsilon)\leq t\leq d_{\cF'}(\varepsilon)}\abs{f_n(\kappa+it)-f_m(\kappa+it)}\leq\varepsilon
    \end{equation}
    for all $n,m\geq N$. Set
    $$I_\ell=\left[\left(\ell-\frac{1}{2}\right)d_{\cF'}(\varepsilon),\left(\ell+\frac{1}{2}\right)d_{\cF'}(\varepsilon)\right]$$
    for $\ell\in\bZ$ and note that $\bR=\bigcup_{\ell\in\bZ}I_\ell$. Take any $n,m\geq N$ and any $s=\kappa+it$. Choose $\ell\in\bZ$ such that $t\in I_\ell$, and let $\tau_\ell\in I_{-\ell}\cap E_{\cF'}(\varepsilon)$, which can be done as $I_{-\ell}$ is a closed interval of length $d_{\cF'}(\varepsilon)$ and $\cF'$ is jointly almost periodic by assumption. A simple computation then shows that
    $$-d_{\cF'}(\varepsilon)\leq t+\tau_\ell\leq d_{\cF'}(\varepsilon).$$
    This allows us to estimate
    \begin{align*}
    \abs{f_n(s)-f_m(s)}
        &\leq\underbrace{\abs{V_{\tau_{\ell}}f_n(s)-f_n(s)}}_{\leq\varepsilon\text{ as }\tau_\ell\in E_{\cF'}(\varepsilon)}+\underbrace{\abs{V_{\tau_\ell}f_n(s)-V_{\tau_\ell}f_m(s)}}_{\leq\varepsilon\text{ by \cref{eq:uniformly-cauchy-inteval}}}+\underbrace{\abs{V_{\tau_\ell}f_m(s)-f_m(s)}}_{\leq\varepsilon\text{ as }\tau_\ell\in E_{\cF'}(\varepsilon)} \\
        &\leq3\varepsilon.
    \end{align*}
    From this it follows that $\{f_n\}_{n\in\bZ^+}$ is uniformly Cauchy in the line $\Re s=\kappa$, and hence uniformly convergent to $f$ there. Applying the maximum modulus principle for half-planes (e.g. \cite{bak_complex_2010}*{Theorem 15.1}) to the functions $f_n-f\in H^\infty(\bC_0)$, this extends to uniform convergence on $\bC_\kappa$. As $\kappa>0$ was arbitrary, the claim follows.

    Suppose now instead that \labelcref{item:strong-montel} holds. Let $\kappa>0$, set $\cF_\kappa=\{f\vert_{\bC_\kappa}:f\in\cF\}$, and let $\varepsilon>0$. By assumption we know that $\cF_\kappa$ is precompact in the uniform norm, and consequently it is totally bounded (e.g. \cite{willard_general_1970}*{Problem 24B}). As such we can find some $f_1,\dots,f_n\in\cF_\kappa$ with the property that if $f\in\cF_\kappa$, then
    \begin{equation}\label{eq:total-bound}
        \sup_{s\in\bC_\kappa}\abs{f(s)-f_k(s)}\leq\frac{\varepsilon}{3}
    \end{equation}
    for some $1\leq k\leq n$. Set $E(\varepsilon/3)=E_{f_1}(\varepsilon/3)\cap\dots\cap E_{f_n}(\varepsilon/3)$. We claim that $E(\varepsilon/3)\subseteq E_{\cF_\kappa}(\varepsilon)$. Indeed let $\tau\in E(\varepsilon/3)$ and $f\in\cF_\kappa$. Choose $1\leq k\leq n$ such that \cref{eq:total-bound} holds. Then, for any $s\in\bC_\kappa$, we have that
    $$\abs{V_\tau f(s)-f(s)}\leq\abs{V_\tau f(s)-V_\tau f_k(s)}+\abs{V_\tau f_k(s)-f_k(s)}+\abs{f_k(s)-f(s)}\leq\frac{\varepsilon}{3}+\frac{\varepsilon}{3}+\frac{\varepsilon}{3}=\varepsilon.$$
    The inclusion $E(\varepsilon/3)\subseteq E_{\cF_\kappa}(\varepsilon)$ follows, and as the former set is relatively dense by \cref{thm:finite-joint-ap}, so is the latter. As $\varepsilon>0$ and $\kappa>0$ were arbitrary, the result follows.
\end{proof}

With this, our Montel theorem for $H^\infty_\ap(\bC_0)$ is proven. As was mentioned in the introduction, there are many examples of uniformly bounded families in $H^\infty_\ap(\bC_0)$ that do not have a subsequence converging uniformly on half-planes.

\begin{example}\label{ex:no-uniformly-convergent-subsequence}
    Let $\{\lambda_n\}_{n\in\bZ^+}$ be any sequence of distinct non-negative real numbers that converges to some $\lambda\in[0,\infty)$, and consider the sequence $\{f_n\}_{n\in\bZ^+}$ in $H^\infty_\ap(\bC_0)$ defined by
    $$f_n(s)=e^{-\lambda_n s}.$$
    This sequence is clearly uniformly bounded and converges pointwise to $f(s)=e^{-\lambda s}$. Take now any $s=\sigma+it\in\bC_0$ and observe that
    $$\abs{f_n(s)-f(s)}^2=e^{-2\lambda_n\sigma}+e^{-2\lambda\sigma}-2e^{-(\lambda_n+\lambda)\sigma}\cos((\lambda_n-\lambda)t).$$
    In particular, taking $t_n=\pi/(\lambda_n-\lambda)$, we see that
    $$\sup_{\Re s=\kappa}\abs{f_n(s)-f(s)}^2\geq\abs{f_n(\kappa+it_n)-f(\kappa+it_n)}^2=e^{-2\lambda_n\kappa}+e^{2\lambda\kappa}+2e^{-(\lambda_n+\lambda)\kappa},$$
    from which we get that
    $$\liminf_{n\to\infty}\sup_{\Re s=\kappa}\abs{f_n(s)-f(s)}\geq2e^{-\lambda\kappa}.$$
    In particular, no subsequence of $\{f_n\}_{n\in\bZ^+}$ can converges uniformly even on a vertical line $\Re s=\kappa$, and so consequently also not on a half-plane.
\end{example}

We will use \cref{thm:strong-montel} to prove several Montel-type theorems for subspaces of $H^\infty_\ap(\bC_0)$ by establishing joint almost periodicity.

Our first application will concern spaces analogous to $\sH^\infty$. By a \emph{frequency} we shall mean an increasing sequence $\lambda=\{\lambda_n\}_{n\in\bZ^+}$ of non-negative real numbers with limit $\infty$. Given a frequency $\lambda$, we define $\sD_\ext^\infty(\lambda)$ to be the space of all $f\in H^\infty(\bC_0)$ that can be written as a convergent $\lambda$-Dirichlet series
\begin{equation}\label{eq:lambda-dirichlet}
    f(s)=\sum_{n=1}^\infty a_ne^{-\lambda_ns}
\end{equation}
on some half-plane $\bC_\kappa$ with $\kappa>0$. Observe that $\sH^\infty$ corresponds to the particular choice $\lambda=\{\log n\}_{n\in\bZ^+}$. Associated to a frequency $\lambda$ we have the number $L(\lambda)$, which we define to be the abscissa of convergence of the $\lambda$-Dirichlet series
\begin{equation}\label{eq:l-dirichlet}
    \sum_{n=1}^\infty e^{-\lambda_n s},
\end{equation}
i.e., $L(\lambda)$ is the unique value in $[0,\infty]$ such that if $s=\sigma+it$, then the series in \cref{eq:l-dirichlet} diverges for $\sigma<L(\lambda)$ and converges for $\sigma>L(\lambda)$ (or converges nowhere if $L(\lambda)=\infty$).

In \cite{schoolmann_bohrs_2020}, I. Schoolmann showed that if $L(\lambda)=0$, then we have a Montel-type theorem for $\sD_\ext^\infty(\lambda)$. The approach of Schoolmann is based on showing that if $L(\lambda)=0$, then Bohr's theorem holds for $\sD_\ext^\infty(\lambda)$ in the sense that the $\lambda$-Dirichlet series expansion \cref{eq:lambda-dirichlet} of any $f\in\sD_\ext^\infty(\lambda)$ converges uniformly on all half-planes $\bC_\kappa$ with $\kappa>0$. We remark that there are frequencies $\lambda$ such that Bohr's theorem does not hold for $\sD_\ext^\infty(\lambda)$. Explicit constructions of such $\lambda$ can be found in e.g. \cites{neder_zum_1922,schoolmann_bohrs_2020}.

Using \cref{thm:strong-montel} we show that the analogue of Bayart's strong Montel theorem holds for $\sD_\ext^\infty(\lambda)$ under the assumption that $L(\lambda)<\infty$. In our approach, we completely avoid the need for Bohr's theorem, and only need that we can bound the coefficients in \cref{eq:lambda-dirichlet} by the norm of $f$. For this, we use the following theorem of Schnee.

\begin{theorem}[Schnee]
    Let $\lambda$ be a frequency, and let $f$ be given by \cref{eq:lambda-dirichlet} on some half-plane $\bC_\kappa$. Then
    $$a_n=\lim_{n\to\infty}\frac{1}{2T}\int_{-T}^Tf(\sigma+it)e^{\lambda_n(\sigma+it)}\,\dd t$$
    for all $n\in\bZ^+$ and all $\sigma>\kappa$.
\end{theorem}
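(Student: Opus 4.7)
The plan is to treat this limit as extracting a Bohr--Fourier coefficient: multiplication of $f$ by $e^{\lambda_n s}$ and vertical averaging should isolate the single term in the series with $\lambda_m=\lambda_n$.

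Fix $\sigma>\kappa$ and $T>0$. By a standard Abel-summation argument, any $\lambda$-Dirichlet series converges uniformly on compact subsets of its half-plane of convergence, so the partial sums $S_N(s)=\sum_{m=1}^N a_m e^{-\lambda_m s}$ converge to $f$ uniformly on the compact segment $\{\sigma+it:\lvert t\rvert\leq T\}\subset\bC_\kappa$. This allows me to exchange limit and integral, and using $\int_{-T}^T e^{i\xi t}\,\dd t = 2\sin(\xi T)/\xi$ for $\xi\neq0$ (and $2T$ for $\xi=0$), a direct computation yields
$$\frac{1}{2T}\int_{-T}^T f(\sigma+it)e^{\lambda_n(\sigma+it)}\,\dd t = a_n+\sum_{m\neq n}a_m e^{(\lambda_n-\lambda_m)\sigma}\,\frac{\sin((\lambda_n-\lambda_m)T)}{(\lambda_n-\lambda_m)T},$$
where the residual series on the right converges because its value equals the difference between the (well-defined) integral on the left and $a_n$.

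The main obstacle is then to show that this residual sum vanishes as $T\to\infty$. The finite piece over $m<n$ is straightforward, as each term is $O(1/T)$. For the infinite tail over $m>n$, my plan is a contour shift: choose $\sigma_1>\sigma$ lying in the half-plane of absolute convergence (possible whenever $L(\lambda)<\infty$, which covers the settings of interest in this paper) and apply Cauchy's theorem to the analytic function $s\mapsto\bigl(f(s)-\sum_{m\leq n}a_m e^{-\lambda_m s}\bigr)e^{\lambda_n s}$ on the rectangle $[\sigma,\sigma_1]\times[-T,T]\subset\bC_\kappa$. The two horizontal boundary contributions are uniformly bounded by a constant depending only on $\norm{f}_\infty$, $\sigma_1-\sigma$, the first $n$ coefficients, and $e^{\lambda_n\sigma_1}$, so they become negligible after dividing by $2T$. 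At $\sigma_1$, absolute convergence permits Fubini for termwise integration, and the uniform bound $\lvert\sin(\xi T)/(\xi T)\rvert\leq 1$ together with dominated convergence drives the tail to zero. For a completely general frequency with $L(\lambda)=\infty$, this last step must be replaced by a more delicate partial-summation argument (as in Hardy--Riesz), but the overall structure is the same.
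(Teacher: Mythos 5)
The paper does not prove this result; it treats it as known and cites Helson's 1963 article \emph{Convergent Dirichlet series}, so there is no in-text proof to compare against and your sketch must stand on its own. Its first two paragraphs are sound: uniform convergence of the partial sums on the compact segment $\{\sigma+it:\abs t\leq T\}$ is the standard Abel-summation fact for general Dirichlet series, and the identity expressing the vertical mean as $a_n$ plus a (convergent) residual series follows correctly by passing to the limit in $N$ under the integral.

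The contour-shift paragraph has two gaps. First, your horizontal-side estimate invokes $\norm f_\infty$, but the theorem only assumes $f$ is the sum of a convergent $\lambda$-Dirichlet series on $\bC_\kappa$; boundedness is not hypothesized, and a convergent general Dirichlet series need not be bounded on interior vertical strips. You would need a sublinear growth bound $f(\sigma+it)=O(\abs t^{1-\delta})$ at fixed $\sigma>\kappa$, which itself requires an argument. For the paper's application (\cref{cor:coefficient-estimate}, where $f\in\sD_\ext^\infty(\lambda)\subseteq H^\infty(\bC_0)$) this is harmless, but as a proof of the theorem as stated it assumes more than given. Second---the gap you acknowledge---the shift to a half-plane of absolute convergence requires $\sigma_a<\infty$. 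You correctly note this holds when $L(\lambda)<\infty$ (since $\sigma_a\leq\sigma_c+L(\lambda)$), but the theorem is stated for an arbitrary frequency, and when $L(\lambda)=\infty$ the series may converge absolutely nowhere, so there is no $\sigma_1$ to shift to. Deferring to ``a more delicate partial-summation argument (as in Hardy--Riesz)'' is honest, but that is precisely where the content of the theorem lies: the classical proof runs a second Abel summation directly on the residual series at an abscissa $\sigma'\in(\kappa,\sigma)$, exploiting the uniform boundedness of the partial sums there together with the factors $(\lambda_n-\lambda_m)^{-1}$, and never invokes absolute convergence. As written, your proposal establishes the result in the bounded, $L(\lambda)<\infty$ case---which covers all of the paper's uses---but does not prove the theorem in the generality stated.
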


A proof of this theorem can be found in \cite{helson_convergent_1963}. A simple argument using the Cauchy integral theorem similar to the proof of \cite{queffelec_diophantine_2020}*{Theorem 6.1.1} yields the following corollary.

\begin{corollary}\label{cor:coefficient-estimate}
    Let $\lambda$ be a frequency, and let $f\in\sD_\ext^\infty(\lambda)$ be given by \cref{eq:lambda-dirichlet} on some half-plane $\bC_\kappa$. Then
    $$\abs{a_n}\leq\norm{f}_\infty$$
    for all $n\in\bZ^+$.
\end{corollary}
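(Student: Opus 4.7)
The plan is to combine Schnee's formula with an application of the Cauchy integral theorem to extend the validity of the averaging identity all the way down to vertical lines arbitrarily close to the imaginary axis, where the full $H^\infty(\bC_0)$-norm of $f$ becomes available as a bound.

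Schnee's theorem gives, for every $\sigma>\kappa$,
$$a_n=\lim_{T\to\infty}\frac{1}{2T}\int_{-T}^Tf(\sigma+it)e^{\lambda_n(\sigma+it)}\,\dd t,$$
but the right-hand side only bounds $\abs{a_n}$ by $\norm{f}_\infty e^{\lambda_n\sigma}$ with $\sigma>\kappa$, which is too weak. To improve this, I would apply Cauchy's theorem to the entire function $F(s)=f(s)e^{\lambda_n s}$, which is analytic on $\bC_0$, over the rectangle with vertices $\sigma_0\pm iT$ and $\sigma_1\pm iT$, where $0<\sigma_0<\kappa<\sigma_1$. The two horizontal sides contribute integrals of the form $\int_{\sigma_0}^{\sigma_1}F(\sigma\pm iT)\,\dd\sigma$, which are bounded in absolute value by $\norm{f}_\infty\int_{\sigma_0}^{\sigma_1}e^{\lambda_n\sigma}\,\dd\sigma$, a constant independent of $T$. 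Dividing by $2T$ and sending $T\to\infty$ kills these two terms, so that
$$\lim_{T\to\infty}\frac{1}{2T}\int_{-T}^TF(\sigma_0+it)\,\dd t=\lim_{T\to\infty}\frac{1}{2T}\int_{-T}^TF(\sigma_1+it)\,\dd t=a_n,$$
where the second equality is Schnee's theorem at $\sigma_1$.

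Once this is established, the estimate is immediate: for every $\sigma_0>0$,
$$\abs{a_n}\leq\limsup_{T\to\infty}\frac{1}{2T}\int_{-T}^T\abs{f(\sigma_0+it)}e^{\lambda_n\sigma_0}\,\dd t\leq e^{\lambda_n\sigma_0}\norm{f}_\infty,$$
and letting $\sigma_0\to 0^+$ yields $\abs{a_n}\leq\norm{f}_\infty$.

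The only mildly delicate point is making sure the horizontal contributions really vanish after division by $2T$; this is where the assumption $f\in H^\infty(\bC_0)$ (as opposed to merely $f\in H^\infty(\bC_\kappa)$) is essential, since it provides the uniform bound on $\abs{f(\sigma\pm iT)}$ for $\sigma$ in the compact interval $[\sigma_0,\sigma_1]$ with $\sigma_0<\kappa$. Apart from this observation, the argument is a routine rectangle-contour computation and requires no further input beyond Schnee's theorem.
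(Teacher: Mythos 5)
Your argument is correct and is essentially the approach the paper indicates: it explicitly carries out the contour shift via Cauchy's theorem over the rectangle $[\sigma_0,\sigma_1]\times[-T,T]$ that the paper alludes to by referencing the proof of Queff\'elec--Queff\'elec, Theorem 6.1.1, combining it with Schnee's formula and then letting $\sigma_0\to 0^+$. The observation at the end, that membership in $H^\infty(\bC_0)$ rather than merely $H^\infty(\bC_\kappa)$ is what makes the horizontal contributions uniformly bounded, is exactly the point of the argument.
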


We now prove our Montel theorem for $\sD_\ext^\infty(\lambda)$ with $L(\lambda)<\infty$.

\begin{theorem}\label{thm:l-lambda-montel}
    Let $\lambda$ be a frequency satisfying $L(\lambda)<\infty$, and let $\cF$ be a uniformly bounded family in $\sD_\ext^\infty(\lambda)$. For every sequence $\{f_n\}_{n\in\bZ^+}$ in $\cF$ there exists a function $f\in\sD_\ext^\infty(\lambda)$ and a subsequence $\{f_{n_k}\}_{k\in\bZ^+}$ that converges uniformly to $f$ on $\bC_\kappa$ for all $\kappa>0$.
\end{theorem}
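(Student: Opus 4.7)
The plan is to reduce the assertion to an application of \cref{thm:strong-montel} by establishing that $\cF$ is jointly almost periodic on some half-plane $\bC_\kappa$ with $\kappa>0$, and then verifying that the limit provided by the strong Montel theorem actually belongs to $\sD_\ext^\infty(\lambda)$. A preliminary observation is that $\sD_\ext^\infty(\lambda)\subseteq H^\infty_\ap(\bC_0)$: by \cref{cor:coefficient-estimate}, the coefficients of any $f\in\sD_\ext^\infty(\lambda)$ satisfy $\abs{a_n}\leq\norm{f}_\infty$, so since $L(\lambda)<\infty$ the series \cref{eq:lambda-dirichlet} converges absolutely and uniformly on $\bC_\kappa$ for every $\kappa>L(\lambda)$. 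Each partial sum is a finite sum of periodic functions of $\Im s$, hence almost periodic on $\bC_\kappa$, and as the uniform limit of almost periodic functions is almost periodic, $f$ lies in $H^\infty_\ap(\bC_0)$ (and by analytic continuation agrees with its $\lambda$-Dirichlet series on every $\bC_\kappa$ with $\kappa>L(\lambda)$).

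The crucial step is joint almost periodicity. Set $M=\sup_{f\in\cF}\norm{f}_\infty$ and fix any $\kappa>L(\lambda)$; I will show $\cF$ is jointly almost periodic on $\bC_\kappa$. Given $\varepsilon>0$, first choose $N$ so large that $M\sum_{n>N}e^{-\lambda_n\kappa}<\varepsilon/3$, which is possible because \cref{cor:coefficient-estimate} supplies $\abs{a_n(f)}\leq M$ uniformly in $f\in\cF$. Next, apply \cref{thm:finite-joint-ap} to the finite almost periodic family $\{e^{-\lambda_n s}\}_{n=1}^N$ on $\bC_\kappa$ to obtain a relatively dense set of common $\delta$-translation numbers, where $\delta$ is chosen so that $M\delta\sum_{n=1}^{N}e^{-\lambda_n\kappa}<\varepsilon/3$. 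A three-piece triangle inequality splitting $V_\tau f-f$ as (tail of $V_\tau f$) $+$ (finite partial-sum difference) $+$ (tail of $f$) then yields $\abs{V_\tau f(s)-f(s)}\leq\varepsilon$ for every $s\in\bC_\kappa$ and every $f\in\cF$, so these $\tau$ are common $\varepsilon$-translation numbers for $\cF$ on $\bC_\kappa$.

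With joint almost periodicity in hand, \cref{thm:strong-montel} produces a subsequence $\{f_{n_k}\}$ converging uniformly on every $\bC_\kappa$ with $\kappa>0$ to some $f\in H^\infty_\ap(\bC_0)$. To identify $f$ as a member of $\sD_\ext^\infty(\lambda)$, I would exploit the uniform bound $\abs{a_n(f_{n_k})}\leq M$ together with a diagonal extraction to pass to a further subsequence along which $a_n(f_{n_k})\to a_n$ for every $n$. The series $\sum a_n e^{-\lambda_n s}$ then converges absolutely and uniformly on $\bC_\kappa$ for $\kappa>L(\lambda)$, and since each $f_{n_k}$ agrees with its own $\lambda$-Dirichlet series there (by the preliminary paragraph), a term-by-term passage to the limit under the summable majorant $Me^{-\lambda_n\kappa}$ identifies $f=\sum a_n e^{-\lambda_n s}$ on $\bC_\kappa$, placing $f$ in $\sD_\ext^\infty(\lambda)$.

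The main obstacle is the joint almost periodicity step: the essential insight is that the uniform coefficient bound from \cref{cor:coefficient-estimate} is precisely what allows the Dirichlet tail to be controlled uniformly across the entire family $\cF$, reducing an a priori infinite problem to a finite simultaneous approximation problem handled by \cref{thm:finite-joint-ap}. By contrast, once \cref{thm:strong-montel} delivers the uniform limit, preserving the $\lambda$-Dirichlet series structure is essentially a routine dominated-convergence argument, again powered by the same coefficient bound.
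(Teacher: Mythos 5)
Your argument follows the paper's strategy essentially step for step: use the uniform coefficient bound from \cref{cor:coefficient-estimate} to control the Dirichlet tail uniformly over $\cF$ on $\bC_\kappa$ with $\kappa>L(\lambda)$, reduce joint almost periodicity to a finite family of exponentials via \cref{thm:finite-joint-ap}, then apply \cref{thm:strong-montel}; your diagonal extraction to place the limit in $\sD_\ext^\infty(\lambda)$ simply spells out a step the paper attributes to \cref{cor:coefficient-estimate}. One bookkeeping slip: a common $\delta$-translation number of $\{e^{-\lambda_n s}\}_{n=1}^N$ on $\bC_\kappa$ bounds the finite partial-sum difference by $MN\delta$, not by $M\delta\sum_{n=1}^N e^{-\lambda_n\kappa}$, so the requirement should be $MN\delta<\varepsilon/3$ (the paper takes $\delta=\varepsilon/(2MN)$), but this is trivially repaired.
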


\begin{proof}
    Fix $\kappa>L(\lambda)$. The goal is to show that $\cF$ is jointly almost periodic on $\bC_\kappa$ and apply \cref{thm:strong-montel}. Let $\varepsilon>0$. For $f\in\cF$, write $f(s)=\sum_{n=1}^\infty a_n(f)e^{-\lambda_n s}$ for $s\in\bC_\kappa$; it is clear that the series converges absolutely and uniformly to $f$ on $\bC_\kappa$ as a consequence of \cref{cor:coefficient-estimate}. Set $M=\sup_{f\in\cF}\norm{f}_\infty$ and note that
    $$\sup_{\substack{n\in\bZ^+ \\f\in\cF}}\abs{a_n(f)}\leq M$$
    by \cref{cor:coefficient-estimate}. Then, for all $f\in\cF$, $\tau\in\bR$, $s\in\bC_\kappa$ and $N\in\bZ^+$, we can estimate
    \begin{align}\label{eq:series-estimate}
    \begin{split}
    \abs{V_\tau f(s)-f(s)}
        &\leq M\sum_{n=1}^N\lvert e^{-\lambda_n(s+i\tau)}-e^{-\lambda_ns}\rvert+M\sum_{n=N+1}^\infty e^{-\lambda_n\Re s}\lvert e^{-\lambda_ni\tau}-1\rvert \\
        &\leq M\sum_{n=1}^N\lvert e^{-\lambda_n(s+i\tau)}-e^{-\lambda_ns}\rvert+2M\sum_{n=N+1}^\infty e^{-\lambda_n\kappa}.
    \end{split}
    \end{align}
    Choose $N\in\bZ^+$ such that
    $$\sum_{n=N+1}^\infty e^{-\lambda_n\kappa}\leq\frac{\varepsilon}{4M},$$
    which can be done as $\kappa>L(\lambda)$. Take now any $\tau\in\bR$ which is a common $\frac{\varepsilon}{2MN}$-translation number for the maps $s\mapsto e^{-\lambda_n s}$, $n=1,\dots,N$ on $\bC_\kappa$. Using \cref{eq:series-estimate} we can then estimate
    $$\abs{V_\tau f(s)-f(s)}\leq M\sum_{n=1}^N\frac{\varepsilon}{2MN}+2M\frac{\varepsilon}{4M}=\varepsilon$$
    for all $f\in\cF$ and all $s\in\bC_\kappa$. Consequently $\tau$ is a common $\varepsilon$-translation number of $\cF$ on $\bC_\kappa$. As this holds for all $\tau$ in a relatively dense set by \cref{thm:finite-joint-ap}, it now follows that $\cF$ is jointly almost periodic on $\bC_\kappa$. The result now follows by applying \cref{thm:strong-montel} and noting that the limit function must be in $\sD_\ext^\infty(\lambda)$ as a consequence of \cref{cor:coefficient-estimate}.
\end{proof}

If $\lambda=\{\log n\}_{n\in\bZ^+}$, then $L(\lambda)=1$, and we recover Bayart's strong Montel theorem \cite{bayart_hardy_2002}*{Lemma 18}. Similarly we also obtain the case of Schoolmann's strong Montel theorem \cite{schoolmann_bohrs_2020}*{Theorem 4.10} for when $L(\lambda)=0$.

The goal of the above is to illustrate the power of \cref{thm:strong-montel}; indeed the proof above is rather elementary and uses only standard results on almost periodic functions. As was mentioned in the introduction, a stronger generalization of Bayart's Montel theorem was proven by Defant, Vidal, Schoolmann, and Sevilla-Peris in \cite{defant_frechet_2021}, and we now turn our attention to showing their result as a consequence of \cref{thm:strong-montel}. This requires a bit more machinery than the proof of \cref{thm:l-lambda-montel}.

For a function $f\in H^\infty_\ap(\bC_0)$, the \emph{Bohr coefficients} are defined as the numbers
$$a_\lambda(f)=\lim_{T\to\infty}\frac{1}{2T}\int_{-T}^T f(\sigma+it)e^{\lambda(\sigma+it)}\,\dd t$$
with $\lambda\geq0$ and $\sigma>0$. An application of the Cauchy integral theorem shows that the definition of $a_\lambda(f)$ is independent of the choice of $\sigma>0$. As a simple consequence of the triangle inequality we also obtain the following analogue of \cref{cor:coefficient-estimate}.

\begin{lemma}\label{lemma:bohr-coefficient-estimate}
    Let $f\in H^\infty_\ap(\bC_0)$. Then
    $$\abs{a_\lambda(f)}\leq\norm{f}_\infty$$
    for all $\lambda\geq0$.
\end{lemma}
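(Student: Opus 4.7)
The approach is to exploit the $\sigma$-independence of $a_\lambda(f)$ that was just noted before the lemma, combine it with the triangle inequality for the defining integral, and let $\sigma \to 0^+$.

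Concretely, fix $\lambda \geq 0$ and $\sigma > 0$. Using $\lvert e^{\lambda(\sigma+it)}\rvert = e^{\lambda\sigma}$ and $\lvert f(\sigma+it)\rvert \leq \norm{f}_\infty$ for every $t \in \bR$ (since $\sigma + it \in \bC_0$), the triangle inequality gives
$$\left\lvert\frac{1}{2T}\int_{-T}^T f(\sigma+it)e^{\lambda(\sigma+it)}\,\dd t\right\rvert \leq \frac{1}{2T}\int_{-T}^T \lvert f(\sigma+it)\rvert e^{\lambda\sigma}\,\dd t \leq e^{\lambda\sigma}\norm{f}_\infty$$
for every $T > 0$. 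Passing to the limit as $T \to \infty$ yields $\lvert a_\lambda(f)\rvert \leq e^{\lambda\sigma}\norm{f}_\infty$.

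The key point is that the left-hand side is independent of $\sigma > 0$, as recorded in the paragraph preceding the lemma via the Cauchy integral theorem. Hence the estimate actually holds for every $\sigma > 0$, and letting $\sigma \to 0^+$ on the right-hand side gives $\lvert a_\lambda(f)\rvert \leq \norm{f}_\infty$, as desired.

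There is no real obstacle here; the proof mirrors that of \cref{cor:coefficient-estimate} and uses only the $\sigma$-independence of the Bohr coefficient together with the trivial bound $\sup_{s \in \bC_\sigma}\lvert f(s)\rvert \leq \norm{f}_\infty$. The only thing to be careful about is to apply the triangle inequality \emph{before} letting $T \to \infty$, so that one may legitimately pull the uniform bound $e^{\lambda\sigma}\norm{f}_\infty$ outside and then optimize in $\sigma$.
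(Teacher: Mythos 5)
Your proof is correct and is exactly the argument the paper has in mind: the text preceding the lemma records the $\sigma$-independence of $a_\lambda(f)$ via the Cauchy integral theorem and then states that the lemma is ``a simple consequence of the triangle inequality,'' which together amount to precisely your bound $\abs{a_\lambda(f)}\leq e^{\lambda\sigma}\norm{f}_\infty$ followed by letting $\sigma\to0^+$. You also correctly identify the one subtlety, namely that the triangle inequality alone only gives the $e^{\lambda\sigma}$-weighted bound and the $\sigma$-independence is what lets you remove the weight.
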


We define also the \emph{Bohr spectrum} of $f$ as the set
$$\Lambda(f)=\{\lambda\geq0:a_\lambda(f)\neq0\}.$$
A standard result in the theory of almost periodic functions is that the Bohr spectrum is at most countable (see e.g. \cite{besicovitch_almost_1955}*{Chapter 1, §3, Theorem 5}). For a frequency $\lambda$, we define $H^\infty_\lambda(\bC_0)$ as the space of all $f\in H^\infty_\ap(\bC_0)$ with $\Lambda(f)$ contained in $\lambda$. The goal is to show that the direct analogue of Bayart's strong Montel theorem holds for $H^\infty_\lambda(\bC_0)$.

To a function $f\in H^\infty_\ap(\bC_0)$ we formally associate a general Dirichlet series, which we write as
$$f(s)\sim\sum_{\lambda\in\Lambda(f)}a_\lambda(f)e^{-\lambda s}.$$
Note that, in contrast to the spaces $\sD_\ext^\infty(\lambda)$, there is no assumption of convergence of the general Dirichlet series of $f$, and as such the approach taken in the proof of \cref{thm:l-lambda-montel} no longer works for general $f\in H^\infty_\ap(\bC_0)$. There is a way around this, however. Given a frequency $\lambda=\{\lambda_n\}_{n\in\bZ^+}$ and a function $f\in H^\infty_\lambda(\bC_0)$, we define the \emph{Riesz means} of $f$ by
\begin{equation}\label{eq:riesz-means}
    R_\omega f(s)=\sum_{\lambda_n<\omega}a_{\lambda_n}(f)\left(1-\frac{\lambda_n}{\omega}\right)e^{-\lambda_n s}
\end{equation}
for $\omega>0$. Since $\lambda$, by definition, has limit $\infty$, we see that the sum in \cref{eq:riesz-means} has only finitely many terms. In our setting, the Riesz means will take the role of the partial sums in the proof of \cref{thm:l-lambda-montel}, and as such we need the following approximation lemma.

\begin{lemma}\label{lemma:riesz-mean-approximation}
    Let $\lambda$ be a frequency and let $\cF$ be a uniformly bounded family in $H^\infty_\lambda(\bC_0)$. Then
    $$\lim_{\omega\to\infty}\sup_{\substack{s\in\bC_\kappa \\ f\in\cF}}\abs{R_\omega f(s)-f(s)}=0$$
    for all $\kappa>0$.
\end{lemma}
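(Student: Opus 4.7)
The strategy is to realize $R_\omega f$ as a convolution of $f$ with the Fej\'er kernel on $\bR$ in the imaginary direction, and then exploit its approximate-identity properties. Let
\[
    F_\omega(\tau)=\frac{\omega}{2\pi}\left(\frac{\sin(\omega\tau/2)}{\omega\tau/2}\right)^2
\]
be the classical Fej\'er kernel on $\bR$; it satisfies $\int_{\bR}F_\omega(\tau)\,\dd\tau=1$, $\int_{\bR}F_\omega(\tau)e^{-i\mu\tau}\,\dd\tau=\max\{0,1-\abs{\mu}/\omega\}$, and $\int_{\abs\tau\geq\delta}F_\omega(\tau)\,\dd\tau\to 0$ as $\omega\to\infty$ for every $\delta>0$. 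Fixing $\kappa>0$ throughout, the first goal is to establish the identity
\[
    R_\omega f(s)=\int_{-\infty}^{\infty}f(s+i\tau)F_\omega(\tau)\,\dd\tau,\qquad s\in\bC_\kappa,\ f\in H^\infty_\lambda(\bC_0).
\]
For a Dirichlet polynomial with frequencies in $\lambda$ this is immediate from the formula for the Fourier transform of $F_\omega$. For a general $f$, apply the Bochner--Fej\'er theorem to the almost periodic function $t\mapsto f(\kappa+it)$ to obtain Dirichlet polynomials $P_n$ with frequencies in $\Lambda(f)\subseteq\lambda$ that converge to $f$ uniformly on the line $\Re s=\kappa$; by the maximum modulus principle for half-planes the convergence extends to all of $\bC_\kappa$. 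Pass to the limit: on the right-hand side by dominated convergence (the $P_n$ are uniformly bounded on $\bC_\kappa$), and on the left-hand side by observing that Bohr coefficients depend continuously on the function in the uniform norm on $\bC_\kappa$, which follows from a direct estimate of the defining mean at any fixed $\sigma>\kappa$.

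With the representation secured, the rest is a standard approximate-identity argument combined with equi-Lipschitz continuity of $\cF$. By Cauchy's estimate on the disk of radius $\kappa$ about any $s\in\bC_\kappa$, which lies in $\bC_0$, every $f\in\cF$ satisfies $\abs{f'(s)}\leq M/\kappa$, where $M=\sup_{f\in\cF}\norm{f}_\infty$. Given $\varepsilon>0$, choose $\delta>0$ with $M\delta/\kappa<\varepsilon/2$. Combining $\int_\bR F_\omega=1$ with the equi-Lipschitz bound for $\abs\tau<\delta$ and the crude bound $\abs{f(s+i\tau)-f(s)}\leq 2M$ for $\abs\tau\geq\delta$, one obtains
\[
    \abs{R_\omega f(s)-f(s)}\leq\frac{\varepsilon}{2}+2M\int_{\abs\tau\geq\delta}F_\omega(\tau)\,\dd\tau
\]
uniformly for $f\in\cF$ and $s\in\bC_\kappa$, and the last integral tends to $0$ as $\omega\to\infty$, so the whole expression is $<\varepsilon$ for all $\omega$ sufficiently large.

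The hard part will be the convolution representation, particularly producing the Bochner--Fej\'er-type Dirichlet polynomials $P_n$ supported on $\Lambda(f)$ and carefully passing to the limit in both sides of the identity. Once that is secured, the remaining approximate-identity estimate is routine.
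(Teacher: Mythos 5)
Your proof is correct, and it takes a genuinely different route from the paper's. The paper invokes a lemma of Defant, Schoolmann, Vidal, and Sevilla-Peris to express $R_\omega f(\kappa+\sigma+it)=(f_\sigma*P_\kappa*K_\omega)(t)$ where $P_\kappa$ is the Poisson kernel and $\{K_\omega\}$ is an approximate identity, then uses the Poisson integral formula $f(\kappa+\sigma+it)=(f_\sigma*P_\kappa)(t)$ to reduce the estimate to
\[
\abs{R_\omega f(s)-f(s)}\le M\norm{P_\kappa*K_\omega-P_\kappa}_{L^1(\bR)},
\]
which vanishes by the approximate-identity property; no derivative bound on $f$ is needed. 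You instead prove the direct Fej\'er convolution representation $R_\omega f(s)=\int_\bR f(s+i\tau)F_\omega(\tau)\,\dd\tau$ from scratch (via Bochner--Fej\'er approximation on the line $\Re s=\kappa$, the maximum modulus principle, and continuity of Bohr coefficients in the uniform norm), and then carry out the classical split of the integral into $\abs\tau<\delta$ and $\abs\tau\ge\delta$ using equi-Lipschitz continuity from Cauchy's estimate on the disk $B_\kappa(s)\subseteq\bC_0$. Your argument is more self-contained since it does not rely on the cited external lemma; the paper's is shorter once that lemma is available and avoids the equi-Lipschitz step entirely by absorbing the rate of convergence into the $L^1$ distance between $P_\kappa*K_\omega$ and $P_\kappa$. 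Both approaches are sound. Two minor points to tighten: in the passage to the limit on the left-hand side, note explicitly that $R_\omega$ involves only the finitely many Bohr coefficients $a_{\lambda_n}$ with $\lambda_n<\omega$, so $\abs{a_{\lambda_n}(P_k)-a_{\lambda_n}(f)}\le\norm{P_k-f}_\infty$ already gives $R_\omega P_k\to R_\omega f$ uniformly; and you should cite or briefly justify that the Bochner--Fej\'er polynomials can be chosen with frequencies contained in $\Lambda(f)\subseteq\lambda$, which is indeed standard but worth stating.
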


\begin{proof}
    Fix $\kappa>0$. By \cite{defant_frechet_2021}*{Lemma 3.3} we can write
    $$R_\omega f(\kappa+\sigma+it)=(f_\sigma*P_\kappa*K_\omega)(t)$$
    for $\sigma+it\in\bC_0$, where $f_\sigma(t)=f(\sigma+it)$, $P_\kappa(t)=\frac{1}{\pi}\frac{\kappa}{\kappa^2+t^2}$ is the Poisson kernel of the right half-plane, and $\{K_\omega\}_{\omega>0}$ is an approximate identity, meaning that $g*K_\omega\to g$ in $L^1(\bR)$ as $\omega\to\infty$ for all $g\in L^1(\bR)$. Set $M=\sup_{f\in\cF}\norm{f}_\infty$ and take $s=\kappa+\sigma+it\in\bC_\kappa$, $f\in\cF$ and $\omega>0$. As $f(s)=(f_\sigma*P_\kappa)(t)$ by the Poisson integral formula (e.g. \cite{duren_theory_1970}*{Theorem 11.2}), we can estimate
    $$\abs{R_\omega f(s)-f(s)}=\abs{(f_\sigma*(P_\kappa*K_\omega-P_\kappa))(t)}\leq M\norm{P_\kappa*K_\omega-P_\kappa}_{L^1(\bR)}.$$
    The result now follows by taking a supremum over $s\in\bC_\kappa$, $f\in\cF$, and letting $\omega\to\infty$.
\end{proof}

With this we can prove the result of Defant, Vidal, Schoolmann, and Sevilla-Peris.

\begin{theorem}[Defant--Vidal--Schoolmann--Sevilla-Peris]\label{thm:defant-montel}
    Let $\lambda$ be a frequency, and let $\cF$ be a uniformly bounded family in $H^\infty_\lambda(\bC_0)$. For every sequence $\{f_n\}_{n\in\bZ^+}$ in $\cF$ there exists a function $f\in H^\infty_\lambda(\bC_0)$ and a subsequence $\{f_{n_k}\}_{k\in\bZ^+}$ that converges uniformly to $f$ on $\bC_\kappa$ for all $\kappa>0$.
\end{theorem}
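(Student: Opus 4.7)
The plan is to follow the same blueprint as the proof of \cref{thm:l-lambda-montel}: verify that $\cF$ is jointly almost periodic on $\bC_\kappa$ for every $\kappa>0$, and then invoke \cref{thm:strong-montel}. The essential modification is that for a general $f\in H^\infty_\lambda(\bC_0)$ there is no reason for the associated general Dirichlet series to converge, so partial sums must be replaced by the Riesz means $R_\omega f$, which are honest general Dirichlet polynomials with only finitely many nonzero terms, and whose uniform approximation property on each $\bC_\kappa$, uniformly across $\cF$, is precisely the content of \cref{lemma:riesz-mean-approximation}.

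Fix $\kappa>0$ and $\varepsilon>0$, set $M=\sup_{f\in\cF}\norm{f}_\infty$, and by \cref{lemma:riesz-mean-approximation} choose $\omega>0$ such that $\sup_{f\in\cF,\,s\in\bC_\kappa}\abs{R_\omega f(s)-f(s)}\leq\varepsilon/3$. Let $N$ be the number of indices $n$ with $\lambda_n<\omega$. Since \cref{lemma:bohr-coefficient-estimate} gives $\abs{a_{\lambda_n}(f)(1-\lambda_n/\omega)}\leq M$ for all such $n$ and all $f\in\cF$, it follows that for any $\tau\in\bR$ and $s\in\bC_\kappa$,
$$\abs{V_\tau R_\omega f(s)-R_\omega f(s)}\leq M\sum_{\lambda_n<\omega}\abs{e^{-\lambda_n(s+i\tau)}-e^{-\lambda_n s}}.$$
By \cref{thm:finite-joint-ap} the finite family $\{s\mapsto e^{-\lambda_n s}:\lambda_n<\omega\}$ is jointly almost periodic on $\bC_\kappa$, so any common $\varepsilon/(3MN)$-translation number $\tau$ makes the right hand side at most $\varepsilon/3$ uniformly in $f\in\cF$ and $s\in\bC_\kappa$. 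Adding the two $\varepsilon/3$ bounds coming from the Riesz mean approximation yields $\abs{V_\tau f(s)-f(s)}\leq\varepsilon$ for every $f\in\cF$ and $s\in\bC_\kappa$, so the set of such $\tau$ is relatively dense and $\cF$ is jointly almost periodic on $\bC_\kappa$. As $\kappa>0$ was arbitrary, \cref{thm:strong-montel} produces a function $f\in H^\infty_\ap(\bC_0)$ and a subsequence $\{f_{n_k}\}_{k\in\bZ^+}$ converging uniformly to $f$ on every $\bC_\kappa$.

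It remains to verify that $f\in H^\infty_\lambda(\bC_0)$, i.e.\ $\Lambda(f)\subseteq\lambda$. For any $\mu\geq0$ and any $\sigma>0$, the definition of the Bohr coefficients together with $\abs{e^{\mu(\sigma+it)}}=e^{\mu\sigma}$ gives the uniform-in-$T$ bound
$$\abs{a_\mu(f_{n_k})-a_\mu(f)}\leq e^{\mu\sigma}\sup_{\Re s=\sigma}\abs{f_{n_k}(s)-f(s)},$$
whose right hand side tends to zero. Since $a_\mu(f_{n_k})=0$ whenever $\mu\notin\lambda$, the same holds for $f$, and hence $f\in H^\infty_\lambda(\bC_0)$. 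The real work is the joint almost periodicity step; the Riesz means are precisely what allow the finite-frequency argument of \cref{thm:l-lambda-montel} to go through without any convergence hypothesis on the general Dirichlet series of $f$, while the passage from $H^\infty_\ap(\bC_0)$ to $H^\infty_\lambda(\bC_0)$ is merely continuity of the Bohr coefficients under uniform convergence on a single vertical line.
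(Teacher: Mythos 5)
Your proof is correct and follows essentially the same route as the paper: fix $\kappa>0$, use the Riesz-mean approximation lemma to reduce to a general Dirichlet polynomial with finitely many frequencies, exploit joint almost periodicity of the finitely many exponentials via \cref{thm:finite-joint-ap}, run the three-$\varepsilon/3$ estimate, and then invoke \cref{thm:strong-montel}. The only difference is that you spell out the final membership $f\in H^\infty_\lambda(\bC_0)$ via continuity of the Bohr coefficients under uniform convergence on a vertical line, whereas the paper dismisses this as clear; your added detail is correct and welcome.
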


\begin{proof}
    Let $\kappa>0$. The goal is to show that $\cF$ is jointly almost periodic on $\bC_\kappa$ and apply \cref{thm:strong-montel}. Let $\varepsilon>0$. By \cref{lemma:riesz-mean-approximation} we can choose $\omega>0$ such that
    $$\sup_{\substack{s\in\bC_\kappa \\f\in\cF}}\abs{R_\omega f(s)-f(s)}\leq\frac{\varepsilon}{3}.$$
    Let $N_\omega$ denote the cardinality of the finite set $\{\lambda_n:\lambda_n<\omega\}$. Take any $\tau\in\bR$ that is a common $\frac{\varepsilon}{3MN_\omega}$-translation number for the maps $s\mapsto e^{-\lambda_n s}$ for $\lambda_n<\omega$ on $\bC_\kappa$. Using \cref{lemma:bohr-coefficient-estimate}, we can estimate
    \begin{align*}
    \abs{V_\tau R_\omega f(s)-R_\omega f(s)}
        &\leq\sum_{\lambda_n<\omega}\abs{a_{\lambda_n}(f)}\left(1-\frac{\lambda_n}{\omega}\right)\lvert e^{-\lambda_n(s+i\tau)}-e^{-\lambda_ns}\rvert \\
        &\leq\sum_{\lambda_n<\omega}M\frac{\varepsilon}{3MN_\omega} \\
        &=\frac{\varepsilon}{3}
    \end{align*}
    for all $f\in\cF$ and $s\in\bC_\kappa$. This allows us then to estimate
    \begin{align*}
    \abs{V_\tau f(s)-f(s)}
        &\leq\abs{V_\tau f(s)-V_\tau R_\omega f(s)}+\abs{V_\tau R_\omega f(s)-R_\omega f(s)}+\abs{R_\omega f(s)-f(s)} \\
        &\leq\frac{\varepsilon}{3}+\frac{\varepsilon}{3}+\frac{\varepsilon}{3} \\
        &=\varepsilon
    \end{align*}
    for all $f\in\cF$ and $s\in\bC_\kappa$. It follows that $\tau$ is a common $\varepsilon$-translation number of $\cF$ on $\bC_\kappa$. As this holds for all $\tau$ in a relatively dense set by \cref{thm:finite-joint-ap}, it follows that $\cF$ is jointly almost periodic on $\bC_\kappa$. The result now follows by applying \cref{thm:strong-montel}; the limit function must clearly lie in $H^\infty_\lambda(\bC_0)$.
\end{proof}

\section{The almost periodic half-plane algebra}\label{sec:ap-algebra}

In this section we will consider the space $A_\ap(\bC_0)$ defined in the introduction. Our main goal in this section is to prove that $A_\ap(\bC_0)$ is precisely the closure of general Dirichlet polynomials (\cref{thm:ap-algebra-dirichlet} above). The idea of the proof is to extend $f\in A_\ap(\bC_0)$ to $\overline\bC_0$, show that $f$ is almost periodic on the imaginary axis, and conclude using standard approximation results about almost periodic functions on the real line. As such, the first step is the following lemma.

\begin{lemma}\label{lemma:extension-is-ap}
    Let $f$ be the unique uniformly continuous extension to $\overline\bC_0$ of some function in $A_\ap(\bC_0)$. Then $f$ is almost periodic on $\overline\bC_0$.
\end{lemma}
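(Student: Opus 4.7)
The plan is to show that the extension inherits almost periodicity from any half-plane $\bC_\kappa$ with $\kappa > 0$, transferring the relative denseness of translation sets from $\bC_\kappa$ to $\overline{\bC_0}$ via the uniform continuity guaranteed by membership in $A_\ap(\bC_0)$. Since $f$ is bounded on $\bC_0$ (as $f \in H^\infty_\ap(\bC_0)$) and continuously extended, the extension is automatically bounded and uniformly continuous on $\overline{\bC_0}$. The only nontrivial point is the relative denseness of the translation set computed on $\overline{\bC_0}$.

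Fix $\varepsilon > 0$. First I would use uniform continuity of the extension on $\overline{\bC_0}$ to choose $\delta > 0$ with $\abs{f(s) - f(s')} \leq \varepsilon/3$ whenever $s, s' \in \overline{\bC_0}$ satisfy $\abs{s - s'} \leq \delta$. Next, take any $\kappa \in (0, \delta]$. By \cref{cor:bohr-analogue}, the restriction $f\vert_{\bC_\kappa}$ is almost periodic, so the set $E_{f\vert_{\bC_\kappa}}(\varepsilon/3)$ is relatively dense in $\bR$. The key claim is then the inclusion
$$E_{f\vert_{\bC_\kappa}}(\varepsilon/3) \subseteq E_f(\varepsilon),$$
where the right-hand side is interpreted relative to $\overline{\bC_0}$. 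From this inclusion the relative denseness of $E_f(\varepsilon)$ is immediate, completing the proof.

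To verify the claim, let $\tau \in E_{f\vert_{\bC_\kappa}}(\varepsilon/3)$ and $s = \sigma + it \in \overline{\bC_0}$. If $\sigma \geq \kappa$, then $s \in \overline{\bC_\kappa}$, and the inequality $\abs{f(s + i\tau) - f(s)} \leq \varepsilon/3$ holds on $\bC_\kappa$ by definition of $\tau$, and extends to $\overline{\bC_\kappa}$ by continuity. If $0 \leq \sigma < \kappa$, set $s' = \kappa + it$, so that $\abs{s - s'} = \kappa - \sigma < \delta$ and likewise $\abs{(s + i\tau) - (s' + i\tau)} < \delta$. Applying the triangle inequality,
$$\abs{f(s + i\tau) - f(s)} \leq \abs{f(s + i\tau) - f(s' + i\tau)} + \abs{f(s' + i\tau) - f(s')} + \abs{f(s') - f(s)} \leq \tfrac{\varepsilon}{3} + \tfrac{\varepsilon}{3} + \tfrac{\varepsilon}{3} = \varepsilon,$$
where the middle term is bounded by $\varepsilon/3$ since $s' \in \overline{\bC_\kappa}$ and $\tau \in E_{f\vert_{\bC_\kappa}}(\varepsilon/3)$, and the outer terms are bounded using the modulus of continuity. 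Thus $\tau \in E_f(\varepsilon)$, establishing the inclusion.

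The argument is essentially bookkeeping; the only conceptual point is that uniform continuity on the closed half-plane lets us replace any boundary point by a nearby point in $\bC_\kappa$ at the cost of a controlled error, so I do not anticipate any serious obstacle.
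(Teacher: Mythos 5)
Your proof is correct and follows essentially the same strategy as the paper's: use uniform continuity to shift a boundary point horizontally into a half-plane $\bC_\kappa$ on which $f$ is known (via \cref{cor:bohr-analogue}) to be almost periodic, then apply the triangle inequality with three terms each bounded by $\varepsilon/3$. The only cosmetic difference is that the paper shifts every point by the fixed amount $\delta$ (avoiding your case split), and note that in your Case 2 the distance $\kappa - \sigma$ can equal $\delta$ when $\sigma = 0$ and $\kappa = \delta$, so the inequality should read $\leq \delta$ rather than $< \delta$ — harmless, since your modulus-of-continuity bound was stated for $\abs{s-s'} \leq \delta$.
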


\begin{proof}
    Let $\varepsilon>0$. As $f$ is uniformly continuous, we can find some $\delta>0$ such that if $s_1,s_2\in\overline\bC_0$ satisfy $\abs{s_1-s_2}\leq\delta$, then
    $$\abs{f(s_1)-f(s_2)}\leq\frac{\varepsilon}{3}.$$
    Let now $\tau\in\bR$ be any $\varepsilon/3$-translation number of $f$ on $\overline\bC_\delta$. Then, for any $s\in\overline\bC_0$, we have that
    \begin{align*}
    \abs{V_\tau f(s)-f(s)}
        &\leq\abs{V_\tau f(s)-V_\tau f(s+\delta)}+\abs{V_\tau f(s+\delta)-f(s+\delta)}+\abs{f(s+\delta)-f(s)} \\
        &\leq\frac{\varepsilon}{3}+\frac{\varepsilon}{3}+\frac{\varepsilon}{3} \\
        &=\varepsilon.
    \end{align*}
    It follows that $\tau$ is an $\varepsilon$-translation number of $f$ on $\overline\bC_0$. As $\tau$ was an arbitrary element of a relatively dense set, the result follows.
\end{proof}

With this we can prove \cref{thm:ap-algebra-dirichlet}, which we restate for convenience.

\apalgebradirichlet*

\begin{proof}
    It is clear that $A_\ap(\bC_0)$ is a closed subspace of $H^\infty_\ap(\bC_0)$ containing all general Dirichlet polynomials, from which the inclusion $\overline{\sP_D}\subseteq A_\ap(\bC_0)$ follows. For the reverse inclusion, let $f\in A_\ap(\bC_0)$ and let $\varepsilon>0$. By replacing $f$ with its unique uniformly continuous extension to $\overline\bC_0$, we may assume $f$ is defined on $\overline\bC_0$. By \cref{lemma:extension-is-ap}, we then know that $f$ is almost periodic on the imaginary axis $\partial\bC_0$. Using \cite{besicovitch_almost_1955}*{Chapter 1, §9, Theorem 4} and \cite{besicovitch_almost_1955}*{Chapter 3, §3, Theorem 8}, we can then find a sequence $\{P_n\}_{n\in\bZ^+}$ of general Dirichlet polynomials that converges uniformly to $f$ on $\partial\bC_0$. Applying the maximum modulus principle for half-planes (e.g. \cite{bak_complex_2010}*{Theorem 15.1}) to the functions $f-P_n$, this extends to uniform convergence on $\bC_0$. Consequently $f\in\overline{\sP_D}$. The result follows.
\end{proof}

\begin{remark}
    One can use the exact same argument to show that $\sA(\bC_0)$ is precisely the closure of the set of all Dirichlet polynomials in the uniform norm \cite{aron_dirichlet_2017}*{Theorem 2.3}. Indeed if $f\in\sH^\infty$, then by \cite{besicovitch_almost_1955}*{Chapter 3, §3, Theorem 1} the approximating sequence obtained from \cite{besicovitch_almost_1955}*{Chapter 1, §9, Theorem 4} can be chosen to consist of ordinary Dirichlet polynomials.
\end{remark}

Another interesting property of $A_\ap(\bC_0)$ is that it is not separable, contrasting the classical disk algebra.

\begin{theorem}\label{thm:algebra-not-separable}
    $A_\ap(\bC_0)$ is not separable.
\end{theorem}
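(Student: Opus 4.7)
The natural strategy is to exhibit an uncountable family of elements in $A_\ap(\bC_0)$ that is uniformly separated in the supremum norm; since a separable metric space cannot contain an uncountable subset with pairwise distances bounded below by a positive constant, this is enough.

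The plan is to use the family $\{e_\lambda\}_{\lambda>0}$, where $e_\lambda(s)=e^{-\lambda s}$. Each $e_\lambda$ lies in $A_\ap(\bC_0)$: it is bounded by $1$ on $\bC_0$, uniformly continuous on $\overline{\bC_0}$, analytic, and almost periodic on every half-plane $\bC_\kappa$ (it is in fact already in $\sP_D$, but more importantly a general Dirichlet monomial). This gives an uncountable subfamily in $A_\ap(\bC_0)$ without any analytic machinery.

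The key computation is to show that $\|e_{\lambda_1}-e_{\lambda_2}\|_\infty=2$ whenever $\lambda_1\neq\lambda_2$. For $s=\sigma+it\in\bC_0$ one has
$$\abs{e^{-\lambda_1 s}-e^{-\lambda_2 s}}=\abs{e^{-\lambda_1\sigma}e^{-i\lambda_1 t}-e^{-\lambda_2\sigma}e^{-i\lambda_2 t}},$$
and by evaluating at $t=\pi/(\lambda_1-\lambda_2)$ (say $\lambda_1>\lambda_2$) and letting $\sigma\to 0^+$, the right-hand side tends to $\lvert e^{-i\lambda_1 t}-e^{-i\lambda_2 t}\rvert = 2\lvert\sin((\lambda_1-\lambda_2)t/2)\rvert=2$. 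The reverse bound is immediate from the triangle inequality since $\abs{e^{-\lambda_j s}}\le 1$ on $\bC_0$.

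Finally, suppose for contradiction that $A_\ap(\bC_0)$ were separable with countable dense subset $D$. Then the open balls $B(e_\lambda,1)$ for $\lambda>0$ are pairwise disjoint by the distance bound, yet each must contain an element of $D$, forcing $D$ to be uncountable, a contradiction. I do not anticipate a genuine obstacle here; the only thing to verify carefully is the claim $\|e_{\lambda_1}-e_{\lambda_2}\|_\infty=2$, which is the short calculation above.
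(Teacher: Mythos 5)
Your proposal is correct and uses the same family of exponentials $e_\lambda(s)=e^{-\lambda s}$ that the paper uses. The difference is in execution: the paper cites the earlier Example~2.10, which shows $\liminf_n\sup_{\Re s=\kappa}\abs{f_{\lambda_n}-f_\lambda}\geq 2e^{-\lambda\kappa}>0$ for $\lambda_n\to\lambda$, and concludes that each $f_\lambda$ is isolated in $P=\{f_\lambda:\lambda\geq 0\}$, whereas you compute the distance exactly: $\norm{e_{\lambda_1}-e_{\lambda_2}}_\infty=2$ for all $\lambda_1\neq\lambda_2$, which directly yields an uncountable family of pairwise disjoint open balls. Your computation is both sharper and self-contained (no appeal to the earlier example needed), and the concluding step is the cleaner standard argument. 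Either route is valid, but yours avoids the detour through the isolated-point formulation and arrives at the contradiction more directly.
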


\begin{proof}
    Consider the set $P=\{f_\lambda:\lambda\geq0\}$, where $f_\lambda(s)=e^{-\lambda s}$. We claim that $f_\lambda\notin\overline{P\setminus\{f_\lambda\}}$ for all $\lambda\geq0$. Suppose for a contradiction that $f_\lambda\in\overline{P\setminus\{f_\lambda\}}$ for some $\lambda\geq0$. Then we can find a sequence $\{\lambda_n\}_{n\in\bZ^+}$ in $[0,\infty)\setminus\{\lambda\}$ such that $f_{\lambda_n}\to f_\lambda$ uniformly as $n\to\infty$. Then in particular $e^{-\lambda_n}=f_{\lambda_n}(1)\to f_\lambda(1)=e^{-\lambda}$, so that $\lambda_n\to\lambda$ as $n\to\infty$. But then by the argument in \cref{ex:no-uniformly-convergent-subsequence}, we know that $\{f_{\lambda_n}\}_{n\in\bZ^+}$ cannot converge uniformly to $f_\lambda$; a contradiction. To see that this implies that $A_\ap(\bC_0)$ is not separable, suppose for a contradiction that it was. As a subset of a separable metric space is separable (e.g. \cite{willard_general_1970}*{Problem 16G}), it then follows that $P$ is separable. But $P$ cannot be separable. Indeed suppose it had a countable dense subset $A$. As $P$ is uncountable we can take $f_\lambda\in P\setminus A$. But then we would have that $f_\lambda\in\overline A\subseteq\overline{P\setminus\{f_\lambda\}}$; a contradiction. It follows that $A_\ap(\bC_0)$ is not separable.
\end{proof}

In \cite{contreras_composition_2024}*{Proposition 2.9}, the authors construct a function $f\in\sH^\infty$ that has a continuous extension to $\overline\bC_0$ but that is not uniformly continuous on $\bC_0$ (see also the example following \cite{brevig_almost_2025}*{Lemma 2.2}). This example shows that the inclusion $A_\ap(\bC_0)\subseteq H^\infty_\ap(\bC_0)$ is strict. One can also see this by noting that if $g$ is a bounded analytic function on the open unit disc $\bD$ such that the limit $\lim_{z\to e^{i\theta}}g(z)$ does not exist for some $e^{i\theta}$ on the circle, then $f(s)=g(e^{-\lambda s})$ defines a function in $H^\infty_\ap(\bC_0)\setminus A_\ap(\bC_0)$.

\section{Boundedness and compactness of composition operators}\label{sec:composition-operators}

Our first goal in this section is to characterize the symbols $\varphi:\bC_0\to\bC_0$ inducing bounded composition operators on $H^\infty_\ap(\bC_0)$. Clearly this is the same as asking for which $\varphi$ it holds that $f\circ\varphi\in H^\infty_\ap(\bC_0)$ for all $f\in H^\infty_\ap(\bC_0)$. We will show that these are precisely the symbols of the form
\begin{equation}\label{eq:symbol-form}
    \varphi(s)=as+\psi(s)
\end{equation}
with $a\geq0$ and $\psi$ an analytic function that is almost periodic on $\bC_\kappa$ for all $\kappa>0$. We shall start by proving that if $\varphi$ is of the form \cref{eq:symbol-form}, then it induces a bounded composition operator on $H^\infty_\ap(\bC_0)$. Our approach is inspired by the approach of Bayart in his characterization of the bounded composition operators on $\sH^\infty$ (see \cites{bayart_hardy_2002,bayart_composition_2021}).

An important tool for our approach is the concept of a vertical limit function of an almost periodic function. A standard result in the theory of almost periodic functions states that a function $f$ is almost periodic if and only if for any sequence $\{\tau_n\}_{n\in\bZ^+}$ of real numbers, the sequence $\{V_{\tau_n}f\}_{n\in\bZ^+}$ of vertical translations of $f$ has a uniformly convergent subsequence (see e.g. \cite{besicovitch_almost_1955}*{Chapter 1, §2, Theorems 3-4}). The functions which arise as uniform limits of vertical translations of $f$ are called \emph{vertical limit functions} of $f$. We will need the following lemma on the images of half-planes under vertical limit functions in the analytic setting, the proof of which can be done in more or less the same way as \cite{brevig_norms_2020}*{Lemma 1}.

\begin{lemma}\label{lemma:image-vertical-limit}
    Let $f$ be an analytic almost periodic function on a half-plane $\bC_\kappa$, and let $g$ be a vertical limit function of $f$. Then
    $$f(\bC_\alpha)=g(\bC_\alpha)$$
    for all $\alpha\geq\kappa$.
\end{lemma}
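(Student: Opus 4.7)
The plan is to reduce the problem to a Hurwitz-theorem argument after first exploiting a symmetry between $f$ and $g$. The key observation is that if $V_{\tau_n}f\to g$ uniformly on $\bC_\kappa$, then the substitution $s\mapsto s-i\tau_n$ in the supremum gives
$$\sup_{s\in\bC_\kappa}\abs{V_{-\tau_n}g(s)-f(s)}=\sup_{s\in\bC_\kappa}\abs{g(s)-V_{\tau_n}f(s)}\to 0,$$
so $V_{-\tau_n}g\to f$ uniformly on $\bC_\kappa$ as well. Hence $f$ is itself a vertical limit function of $g$, and it suffices to prove only the inclusion $g(\bC_\alpha)\subseteq f(\bC_\alpha)$; the reverse follows by applying the same argument with the roles of $f$ and $g$ swapped.

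To prove $g(\bC_\alpha)\subseteq f(\bC_\alpha)$, I would take any $w=g(s_0)$ with $s_0\in\bC_\alpha$. If $g\equiv w$ on $\bC_\kappa$, then by the symmetry observation $f\equiv w$ as well, and the claim is trivial, so assume $g$ is non-constant. Then $s_0$ is an isolated zero of $g-w$, and I can choose a radius $r>0$ so small that $\overline{B_r(s_0)}\subseteq\bC_\alpha$ and $g-w$ vanishes only at $s_0$ inside this disk. Since $V_{\tau_n}f-w\to g-w$ uniformly on $\bC_\kappa$ and each $V_{\tau_n}f-w$ is analytic, Hurwitz's theorem supplies, for all sufficiently large $n$, a zero $s_n\in B_r(s_0)$ of $V_{\tau_n}f-w$. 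This means $f(s_n+i\tau_n)=w$, and since $\Re(s_n+i\tau_n)=\Re s_n>\alpha$ (as $s_n\in B_r(s_0)\subseteq\bC_\alpha$), the point $s_n+i\tau_n$ lies in $\bC_\alpha$, giving $w\in f(\bC_\alpha)$ as required.

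The only non-routine point is ensuring that the Hurwitz zero lands in $\bC_\alpha$ rather than merely in its closure; this is handled by the openness of $\bC_\alpha$, which lets us shrink $r$ so that $\overline{B_r(s_0)}\subseteq\bC_\alpha$, together with the fact that vertical translation preserves real parts. The symmetry observation is also essential for handling the degenerate constant case, since without it one could not directly conclude that $f$ is constant when $g$ is.
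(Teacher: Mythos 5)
Your proof is correct and follows the same Hurwitz-theorem strategy that the paper delegates to \cite{brevig_norms_2020}*{Lemma 1}: both inclusions come from the observation that $f$ and $g$ are vertical limit functions of each other (your substitution $s\mapsto s-i\tau_n$ is the general-almost-periodic analogue of inverting the character in the Dirichlet-series setting), and Hurwitz's theorem supplies a zero of $V_{\tau_n}f-w$ inside a small disk around $s_0$, which the vertical translation carries back to a preimage of $w$ at the same real part. One small remark: the constant case can be handled even more directly without invoking the symmetry, since vertical translation is a bijection of $\bC_\kappa$ and hence $\sup_{s\in\bC_\kappa}\abs{V_{\tau_n}f(s)-w}=\sup_{s\in\bC_\kappa}\abs{f(s)-w}$; if the left side tends to $0$ the right side is $0$, forcing $f\equiv w$.
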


We will also need the following standard fact about the behavior at $+\infty$ for analytic almost periodic functions (see e.g. \cite{bochner_potential-theoretic_1963}*{Theorem 1} or \cite{besicovitch_almost_1955}*{Chapter 3, §4, Theorem 1} for a proof).

\begin{lemma}\label{lemma:limit-at-infinity}
    If $f$ is an analytic almost periodic function on a half-plane $\bC_\kappa$, then there exists a complex number, denoted by $f(+\infty)$, such that
    $$\lim_{\kappa'\to\infty}\sup_{s\in\bC_{\kappa'}}\abs{f(s)-f(+\infty)}=0.$$
\end{lemma}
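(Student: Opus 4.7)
The plan is to reduce to the case $\kappa=0$ by translation and then identify the candidate limit as the zeroth Bohr coefficient. Replacing $f$ by the translate $g(s)=f(s+\kappa)$, we have $g\in H^\infty_\ap(\bC_0)$, and it suffices to prove uniform convergence of $g$ to $a_0(g)$ on $\bC_\sigma$ as $\sigma\to\infty$, after which we may set $f(+\infty)=a_0(g)$. Since the Bohr spectrum $\Lambda(g)$ is at most countable, I enumerate it (padding with arbitrary additional positive reals if necessary) as a frequency $\lambda=\{\lambda_n\}_{n\in\bZ^+}$, so that $g\in H^\infty_\lambda(\bC_0)$.

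Given $\varepsilon>0$, the idea is to sandwich $g$ between $a_0(g)$ and a Riesz-mean approximation. Applying \cref{lemma:riesz-mean-approximation} to the singleton family $\{g\}$ at level $\kappa'=1$, I choose $\omega>0$ so large that $\sup_{s\in\bC_1}\abs{R_\omega g(s)-g(s)}<\varepsilon/2$. The key observation is that if $0\in\lambda$, then the $\lambda_n=0$ term of $R_\omega g$ equals $a_0(g)(1-0)=a_0(g)$, while if $0\notin\lambda$ then $0\notin\Lambda(g)$ and $a_0(g)=0$. In either case
$$R_\omega g(s)-a_0(g)=\sum_{0<\lambda_n<\omega}a_{\lambda_n}(g)\left(1-\frac{\lambda_n}{\omega}\right)e^{-\lambda_n s}$$
is a \emph{finite} sum of exponentials whose frequencies are all strictly positive. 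Bounding each coefficient by $\norm{g}_\infty$ via \cref{lemma:bohr-coefficient-estimate} and using $\abs{e^{-\lambda_n s}}=e^{-\lambda_n\Re s}$, I can choose $\sigma_0\geq 1$ such that the above sum has modulus less than $\varepsilon/2$ for $\Re s\geq\sigma_0$. A triangle inequality then gives $\abs{g(s)-a_0(g)}<\varepsilon$ on $\bC_{\sigma_0}$, and translating back yields the lemma for $f$.

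I do not foresee a real obstacle, since the Riesz-mean machinery assembled for the proof of \cref{thm:defant-montel} does most of the work; the remaining subtlety is merely checking that the zero-frequency contribution in $R_\omega g$ genuinely reproduces $a_0(g)$ and that enlarging the enumeration to include (or exclude) $0$ is harmless. As an alternative route one could bypass the Riesz means altogether and invoke the Bochner--Fejér approximation theorem directly, approximating $g$ uniformly on $\bC_1$ by general Dirichlet polynomials with non-negative frequencies and then isolating the constant term; this is in spirit the classical argument found in Besicovitch's book referenced in the statement.
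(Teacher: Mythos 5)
Your main route via Riesz means has a gap at the step where you claim $\Lambda(g)$ can be enumerated (after padding) as a frequency. A frequency is, by definition, a strictly increasing sequence with limit $\infty$, and hence has only finitely many terms in any bounded interval; the Bohr spectrum of a general member of $H^\infty_\ap(\bC_0)$ need not be this sparse. For instance, if $\{\mu_n\}_{n\in\bZ^+}$ enumerates $\bQ\cap[0,1]$, then $g(s)=\sum_{n\geq1}2^{-n}e^{-\mu_ns}$ converges absolutely and uniformly on $\overline\bC_0$, so $g\in A_\ap(\bC_0)\subset H^\infty_\ap(\bC_0)$, yet $\Lambda(g)=\bQ\cap[0,1]$ is dense in $[0,1]$ and therefore is not contained in any frequency. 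For such a $g$ there is no $\lambda$ with $g\in H^\infty_\lambda(\bC_0)$, the sum defining $R_\omega g$ in \cref{eq:riesz-means} has infinitely many terms, and \cref{lemma:riesz-mean-approximation} is inapplicable. So the reduction to $H^\infty_\lambda(\bC_0)$ is not legitimate.

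Your parenthetical alternative is the right repair, and in fact the paper offers no proof of this lemma---it simply cites \cite{bochner_potential-theoretic_1963}*{Theorem 1} and \cite{besicovitch_almost_1955}*{Chapter 3, §4, Theorem 1}. To make your alternative precise with the paper's own tools: after the translation, $g$ is not merely in $H^\infty_\ap(\bC_0)$ but also uniformly continuous on $\bC_0$ (since $f$, being almost periodic, is uniformly continuous on $\bC_\kappa$), so $g\in A_\ap(\bC_0)$. By \cref{thm:ap-algebra-dirichlet} there are general Dirichlet polynomials $P_n\to g$ uniformly on $\bC_0$; each $P_n$ tends uniformly to its constant term $a_0(P_n)$ as $\Re s\to\infty$, and $\abs{a_0(P_n)-a_0(g)}\leq\norm{P_n-g}_\infty\to0$ by \cref{lemma:bohr-coefficient-estimate}, so a $3\varepsilon$-argument yields the claim with $f(+\infty)=a_0(g)$. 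Replacing the Riesz-mean step by this approximation argument (or, equivalently, by a direct appeal to Bochner--Fej\'er, as you suggest) closes the gap.
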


With this we can prove two important lemmas for our theorem, which play a similar role to \cite{gordon_composition_1999}*{Proposition 4.2} in the setting of spaces of Dirichlet series.

\begin{lemma}\label{lemma:psi-image}
    Let $\psi$ be an analytic function on $\bC_0$ that is almost periodic on $\bC_\kappa$ for all $\kappa>0$. If $\psi(\bC_0)\subseteq\bC_0$, then, for all $\kappa>0$, there exists a $\nu>0$ such that $\psi(\bC_\kappa)\subseteq\bC_\nu$.
\end{lemma}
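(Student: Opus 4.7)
I would argue by contradiction. Suppose for some $\kappa > 0$ we have $\inf_{s \in \bC_\kappa}\Re\psi(s) = 0$, and pick a sequence $s_n = \sigma_n + it_n \in \bC_\kappa$ with $\Re\psi(s_n) \to 0$. After passing to a subsequence, either (A) $\sigma_n \to \sigma^* \in [\kappa, \infty)$, or (B) $\sigma_n \to \infty$. I handle these two cases separately, as the main obstacle will be that case (B) requires a substantially different technique.

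In case (A), I would use that $\psi$ is almost periodic on $\bC_{\kappa/2}$. Passing to a further subsequence, $V_{t_n}\psi$ converges uniformly on $\bC_{\kappa/2}$ to a vertical limit function $g$, which is analytic with $\Re g \geq 0$. Since $\sigma_n \to \sigma^* > \kappa/2$ and the convergence is uniform on $\bC_{\kappa/2}$, we have $\psi(s_n) = V_{t_n}\psi(\sigma_n) \to g(\sigma^*)$, so $\Re g(\sigma^*) = 0$. By the minimum principle for the nonnegative harmonic function $\Re g$ on the connected open set $\bC_{\kappa/2}$, we conclude $\Re g \equiv 0$, and the Cauchy--Riemann equations then force $g \equiv iC$ for some $C \in \bR$. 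Invoking \cref{lemma:image-vertical-limit} with $\alpha = \kappa$ gives $\psi(\bC_\kappa) = g(\bC_\kappa) = \{iC\}$, so $\psi \equiv iC$ on $\bC_\kappa$ and hence on all of $\bC_0$ by the identity theorem, which contradicts $\Re\psi > 0$.

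In case (B), \cref{lemma:limit-at-infinity} yields $\psi(s_n) \to \psi(+\infty)$, forcing $\Re\psi(+\infty) = 0$. Here the vertical limit argument from case (A) breaks down because $\sigma_n$ escapes to infinity and \cref{lemma:image-vertical-limit} cannot capture $\psi(+\infty)$; the remedy is to reduce to a one-dimensional question about positive almost periodic functions via the Bohr mean. Indeed, $\psi(+\infty)$ coincides with the Bohr mean $a_0(\psi) = \lim_{T\to\infty}\frac{1}{2T}\int_{-T}^T \psi(\sigma + it)\,\dd t$ for any $\sigma > \kappa$, so $a_0(\Re\psi) = 0$. On the other hand, for any fixed $\sigma > \kappa$, the map $t \mapsto \Re\psi(\sigma + it)$ is a strictly positive almost periodic function on $\bR$, and any such function has strictly positive mean: picking $t_0$ with $\Re\psi(\sigma + it_0) = c > 0$, continuity gives $\Re\psi \geq c/2$ on an interval of length $2\delta$ around $t_0$, and almost periodicity provides a relatively dense set of $(c/4)$-translation numbers of $\Re\psi$ whose translates of this interval supply a positive density of $t$ on which $\Re\psi \geq c/4$. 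This contradicts $a_0(\Re\psi) = 0$, completing the proof.
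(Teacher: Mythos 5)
Your argument is correct, and it runs parallel to the paper's proof in case (A) but diverges genuinely in case (B). In case (A), the paper passes to the vertical limit $g$ and gets a contradiction directly from $\Re g(\sigma') = 0$ together with $g(\bC_{\kappa/2}) = \psi(\bC_{\kappa/2}) \subseteq \bC_0$ via \cref{lemma:image-vertical-limit}; you reach the same contradiction through a small detour (minimum principle for the nonnegative harmonic function $\Re g$, then Cauchy--Riemann to force $g$ constant, then \cref{lemma:image-vertical-limit} to force $\psi$ constant). The detour is unnecessary but harmless. In case (B), the two proofs differ in substance. The paper also forms the vertical limit $g$, sets $G = e^{-g}$, and deduces from $\Re g(+\infty)=0$ and the maximum modulus principle for half-planes that $M_G(\kappa)=1$, hence $\inf_t \Re g(\kappa+it)=0$; this reduces case (B) to the same contradiction as case (A), applied to $g$ rather than $\psi$. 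You instead bypass vertical limits entirely: from $\Re\psi(+\infty)=0$ you identify $\psi(+\infty)$ with the Bohr mean $a_0(\psi)$, observe that $a_0$ is independent of the vertical line chosen, and derive a contradiction from the elementary fact that a strictly positive almost periodic function of a real variable has strictly positive Bohr mean (relatively dense translates of an interval of positive length where the function is bounded below give a positive lower density). Both routes are sound; the paper's keeps the whole argument inside the circle of vertical-limit/maximum-modulus facts it has already set up, while yours trades the max-modulus recursion for a self-contained density lemma about positive almost periodic functions. One small caveat for your write-up: $\psi$ need not lie in $H^\infty_\ap(\bC_0)$ (it is unbounded near $\partial\bC_0$ in general), so the invocation of the Bohr coefficient $a_0(\psi)$ and its $\sigma$-independence should be justified for $\psi$ almost periodic on every $\bC_\kappa$, $\kappa>0$, rather than by the boxed definition in the paper; this is routine (Cauchy's theorem on a bounded vertical strip, plus \cref{lemma:limit-at-infinity}) but worth a sentence.
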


\begin{proof}
    If $\psi$ is constant then the claim is trivial, so suppose $\psi$ is not constant. Fix $\kappa>0$ and suppose for a contradiction that there exists some sequence $\{\sigma_n+it_n\}_{n\in\bZ^+}$ in $\bC_\kappa$ such that $\Re\psi(\sigma_n+it_n)\to0$ as $n\to\infty$. By passing to a subsequence, we may also suppose that $\{\sigma_n\}_{n\in\bZ^+}$ converges to some $\sigma'\in[\kappa,+\infty]$ and that $\{V_{t_n}\psi\}_{n\in\bZ^+}$ converges uniformly to some function $g$ on $\bC_{\kappa/2}$. We claim that $g$ is constant. Observe first that
    \begin{equation}\label{eq:g-at-sigma}
        \Re g(\sigma')=\lim_{n\to\infty}\Re V_{t_n}\psi(\sigma_n)=\lim_{n\to\infty}\Re\psi(\sigma_n+it_n)=0.
    \end{equation}
    Consider now the map $G=e^{-g}$, which is analytic on $\bC_{\kappa/2}$. As $g(\bC_{\kappa/2})=\psi(\bC_{\kappa/2})\subseteq\bC_0$ by \cref{lemma:image-vertical-limit}, we have that $G(\bC_{\kappa/2})\subseteq\bD$, where $\bD$ denotes the open unit disk. Now if $\sigma'\neq+\infty$, then
    $$\abs{G(\sigma')}=e^{-\Re g(\sigma')}=1$$
    by \cref{eq:g-at-sigma}, which contradicts that $G(\bC_{\kappa/2})\subseteq\bD$, so it must be the case that $\sigma'=+\infty$. Set now
    $$M_G(\sigma)=\sup_{t\in\bR}\abs{G(\sigma+it)}$$
    for $\sigma\geq\kappa$. By the maximum modulus principle for half-planes (e.g. \cite{bak_complex_2010}*{Theorem 15.1}), together with \cref{lemma:limit-at-infinity} we then have that
    $$1\geq M_G(\kappa)\geq \lim_{\sigma\to\infty}M_G(\sigma)=e^{-\Re g(+\infty)}=1.$$
    But then $\inf_{t\in\bR}\Re g(\kappa+it)=0$, so we can proceed with an argument similar to the case when $\sigma'\neq+\infty$ to obtain a contradiction. We have thus shown that $g$ is constant, and so by \cref{lemma:image-vertical-limit} it also follows that $\psi$ is constant; a contradiction. The result follows.
\end{proof}

\begin{lemma}\label{lemma:psi-image-cn}
    Let $\varphi:\bC_0\to\bC_0$ be of the form $\varphi(s)=as+\psi(s)$ with $a\geq0$ and $\psi$ an analytic function that is almost periodic on $\bC_\kappa$ for all $\kappa>0$. If $\psi$ is not constant, then $\psi(\bC_0)\subseteq\bC_0$.
\end{lemma}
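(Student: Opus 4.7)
The plan is to show that $\Re\psi\geq 0$ on $\bC_0$ by applying the maximum modulus principle for half-planes to $e^{-\psi}$ on each $\bC_\kappa$ with $\kappa>0$, then letting $\kappa\to 0^+$, and finally promoting the nonstrict inequality to $\Re\psi>0$ via the minimum principle for harmonic functions.

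First I translate the hypothesis $\varphi(\bC_0)\subseteq\bC_0$ into the pointwise bound $\Re\psi(s)>-a\Re s$ on $\bC_0$. Next, for each fixed $\kappa>0$, almost periodicity on $\bC_\kappa$ makes $\psi$ bounded there, so $e^{-\psi}$ is a bounded analytic function on $\bC_\kappa$; and because $\psi$ is analytic on the larger half-plane $\bC_0\supseteq\overline{\bC_\kappa}$, this $e^{-\psi}$ is continuous on $\overline{\bC_\kappa}$ as well. The boundary bound $\Re\psi(\kappa+it)>-a\kappa$ gives $|e^{-\psi(\kappa+it)}|\leq e^{a\kappa}$ for all $t\in\bR$, and the maximum modulus principle for half-planes (\cite{bak_complex_2010}*{Theorem 15.1}) extends this to $|e^{-\psi(s)}|\leq e^{a\kappa}$ on all of $\bC_\kappa$, which is exactly $\Re\psi(s)\geq-a\kappa$ on $\bC_\kappa$.

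To finish, given any $s_0\in\bC_0$ the preceding bound applies for every $\kappa\in(0,\Re s_0)$, so letting $\kappa\to 0^+$ yields $\Re\psi(s_0)\geq 0$; hence $\psi(\bC_0)\subseteq\overline{\bC_0}$. The upgrade to strict inequality uses that $\psi$ is nonconstant: if $\Re\psi$ were constant, the Cauchy--Riemann equations would force $\psi$ itself to be constant, contradicting our assumption. Hence $\Re\psi$ is a nonconstant harmonic function on the connected open set $\bC_0$, and the minimum principle forbids it from attaining its infimum $0$ at any interior point, so $\Re\psi>0$ throughout $\bC_0$.

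I do not anticipate a serious obstacle: the only nontrivial step is the maximum modulus application on each $\bC_\kappa$, whose sole hypothesis is that $e^{-\psi}$ be bounded and analytic there, and this is supplied directly by almost periodicity on $\bC_\kappa$. The passage $\kappa\to 0^+$ and the final invocation of the minimum principle are entirely routine.
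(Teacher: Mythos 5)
Your proof is correct and takes essentially the same approach as the paper: apply the maximum modulus principle for half-planes to $e^{-\psi}$ on $\bC_\kappa$ (where boundedness follows from almost periodicity), use $\varphi(\bC_0)\subseteq\bC_0$ to bound the boundary by $e^{a\kappa}$, and let $\kappa\to0^+$. The only cosmetic difference is that you promote $\Re\psi\geq0$ to $\Re\psi>0$ via the minimum principle for harmonic functions, while the paper invokes the open mapping theorem; both are one-line finishes of equal standing.
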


\begin{proof}
    Set $F=e^{-\psi}$, and note that $F$ is analytic on $\bC_0$. As $\psi$ is almost periodic on $\bC_\kappa$, it is bounded there, and consequently so is $F$, and for $s\in\bC_0\setminus\bC_\kappa$ we have that
    $$\abs{F(s)}=e^{-\Re\varphi(s)+a\Re s}\leq e^{a\kappa}.$$
    In particular $F\in H^\infty(\bC_0)$. If $s\in\bC_0$ and $\varepsilon\in(0,\Re s)$, then by the maximum modulus principle for half-planes (e.g. \cite{bak_complex_2010}*{Theorem 15.1}) we have that
    $$e^{-\Re\psi(s)}=\abs{F(s)}\leq\sup_{t\in\bR}\abs{F(\varepsilon+it)}\leq e^{a\varepsilon}.$$
    Letting $\varepsilon\downarrow0$ we obtain that $\Re\psi(s)\geq0$. As $\psi$ is non-constant, the open mapping property of analytic functions then finally implies that $\psi(\bC_0)\subseteq\bC_0$.
\end{proof}

The final lemma is a simple result on common translation numbers of almost periodic functions on possibly different half-planes.

\begin{lemma}\label{lemma:relatively-dense-intersection}
    Let $f$ be an almost periodic function on a half-plane $\bC_\kappa$ and let $g$ be an almost periodic function on a (possibly different) half-plane $\bC_{\kappa'}$. Then, for all $a>0$ and all $\varepsilon>0$, the set $E_f(\varepsilon)\cap aE_g(\varepsilon)$ is relatively dense.
\end{lemma}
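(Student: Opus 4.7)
The plan is to reduce to \cref{thm:finite-joint-ap} by transporting both $f$ and $g$ to almost periodic functions on the common half-plane $\bC_0$ in such a way that the transported $\varepsilon$-translation number sets are precisely $E_f(\varepsilon)$ and $aE_g(\varepsilon)$. The scaling factor $a$ and the different base abscissae $\kappa$ and $\kappa'$ can both be absorbed into affine changes of variables on the inputs of $f$ and $g$, so the two-function family we end up with is jointly almost periodic by \cref{thm:finite-joint-ap} alone.

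Concretely, I would set $f'(s)=f(s+\kappa)$ and $g'(s)=g(s/a+\kappa')$ for $s\in\bC_0$. Since $s\mapsto s+\kappa$ maps $\bC_0$ bijectively onto $\bC_\kappa$ and $s\mapsto s/a+\kappa'$ maps $\bC_0$ bijectively onto $\bC_{\kappa'}$, and both are Lipschitz (with Lipschitz inverses, as $a>0$), the functions $f'$ and $g'$ are bounded and uniformly continuous on $\bC_0$. The substitution $w=s+\kappa$ gives $\tau\in E_{f'}(\varepsilon)$ if and only if $\tau\in E_f(\varepsilon)$, while the substitution $w=s/a+\kappa'$ gives $\tau\in E_{g'}(\varepsilon)$ if and only if $\tau/a\in E_g(\varepsilon)$. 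Hence $E_{f'}(\varepsilon)=E_f(\varepsilon)$ and $E_{g'}(\varepsilon)=aE_g(\varepsilon)$, so in particular both $f'$ and $g'$ are almost periodic on $\bC_0$. Applying \cref{thm:finite-joint-ap} to the finite family $\{f',g'\}$ yields the relative denseness of
$$E_{f'}(\varepsilon)\cap E_{g'}(\varepsilon)=E_f(\varepsilon)\cap aE_g(\varepsilon),$$
which is the desired conclusion.

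There is no real obstacle here: once the reduction is set up, the conclusion follows immediately from \cref{thm:finite-joint-ap}. The only point requiring care is verifying that the two affine changes of variables preserve boundedness, uniform continuity, and transform the $\varepsilon$-translation number sets in the stated way, which is routine.
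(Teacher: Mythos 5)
Your proof is correct and takes essentially the same approach as the paper: both absorb the scaling factor $a$ (and the abscissa shift) into an affine change of variables so that the rescaled translation number set becomes $aE_g(\varepsilon)$, and then apply \cref{thm:finite-joint-ap}. The only cosmetic difference is that you transport both $f$ and $g$ to $\bC_0$, whereas the paper leaves $f$ in place and transports only $g$ to $\bC_\kappa$.
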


\begin{proof}
    Define $g_a:\bC_\kappa\to\bC$ by
    $$g_a(s)=g\left(\frac{s-\kappa}{a}+\kappa'\right).$$
    Clearly $g_a$ is almost periodic, and so by \cref{thm:finite-joint-ap} we have that $E_f(\varepsilon)\cap E_{g_a}(\varepsilon)$ is relatively dense. The result now follows by observing that $E_{g_a}(\varepsilon)=aE_g(\varepsilon)$.
\end{proof}

With this we can prove that all $\varphi:\bC_0\to\bC_0$ of the form \cref{eq:symbol-form} induce bounded composition operators on $H^\infty_\ap(\bC_0)$.

\begin{lemma}\label{lemma:sufficiency}
    Let $\varphi:\bC_0\to\bC_0$ be of the form $\varphi(s)=as+\psi(s)$ with $a\geq0$ and $\psi$ an analytic function that is almost periodic on $\bC_\kappa$ for all $\kappa>0$. Then $f\circ\psi\in H^\infty_\ap(\bC_0)$ for all $f\in H^\infty_\ap(\bC_0)$.
\end{lemma}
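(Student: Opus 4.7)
The plan is to verify that $g=f\circ\varphi$ lies in $H^\infty(\bC_0)$ and is almost periodic on some half-plane $\bC_{\kappa_1}$ with $\kappa_1>0$. Analyticity is immediate from the composition of analytic functions, and boundedness follows from $\abs{g(s)}\leq\norm{f}_\infty$. Fix $\kappa_0>0$ such that $f$ is almost periodic on $\bC_{\kappa_0}$; by \cref{cor:bohr-analogue}, $f$ is then almost periodic, and in particular uniformly continuous, on every half-plane $\bC_{\kappa'}$ with $\kappa'>0$.

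The argument naturally splits on whether $\psi$ is constant. If $\psi\equiv c$ then $\varphi(s)=as+c$, and since $\varphi(\bC_0)\subseteq\bC_0$, either $a=0$ (forcing $\Re c>0$, so that $\varphi$ and hence $g$ is constant) or $a>0$; in the latter case choosing $\kappa_1>\kappa_0/a$ yields $\varphi(\bC_{\kappa_1})\subseteq\bC_{\kappa_0}$, and any $\tau\in a^{-1}E_f(\varepsilon)$ (a relatively dense set) is an $\varepsilon$-translation number of $g$ on $\bC_{\kappa_1}$.

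In the remaining case, $\psi$ is non-constant. \Cref{lemma:psi-image-cn} then gives $\psi(\bC_0)\subseteq\bC_0$, and \cref{lemma:psi-image} produces, for any chosen $\kappa_1>0$, some $\nu>0$ with $\psi(\bC_{\kappa_1})\subseteq\bC_\nu$; hence $\varphi(\bC_{\kappa_1})\subseteq\bC_\rho$ where $\rho=a\kappa_1+\nu>0$. Since both $s\mapsto as$ and $\psi$ are uniformly continuous on $\bC_{\kappa_1}$, and $f$ is uniformly continuous on $\bC_{\rho/2}$, $g$ is automatically uniformly continuous on $\bC_{\kappa_1}$, and only the relative density of $E_g(\varepsilon)$ remains. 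Given $\varepsilon>0$, the uniform continuity of $f$ on $\bC_{\rho/2}$ furnishes $\delta\in(0,\rho/2)$ such that $w_1,w_2\in\bC_{\rho/2}$ with $\abs{w_1-w_2}\leq\delta$ implies $\abs{f(w_1)-f(w_2)}\leq\varepsilon/2$. Applying \cref{lemma:relatively-dense-intersection} when $a>0$ (and merely using $E_\psi(\delta)$ when $a=0$) shows that $E_\psi(\delta)\cap a^{-1}E_f(\varepsilon/2)$ is relatively dense. For any $\tau$ in this set and any $s\in\bC_{\kappa_1}$, writing $u=\varphi(s)$ and $h=\psi(s+i\tau)-\psi(s)$ gives $\varphi(s+i\tau)=u+ai\tau+h$ with both $u+ai\tau$ and $u+ai\tau+h$ lying in $\bC_{\rho/2}$, whence the triangle inequality
$$\abs{g(s+i\tau)-g(s)}\leq\abs{f(u+ai\tau+h)-f(u+ai\tau)}+\abs{f(u+ai\tau)-f(u)}\leq\frac{\varepsilon}{2}+\frac{\varepsilon}{2}=\varepsilon$$
closes the argument.

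The hard part will be juggling two different scales of almost periodicity simultaneously: the linear contribution $ai\tau$ must be absorbed through the almost periodicity of $f$ (which demands $a\tau\in E_f(\varepsilon/2)$), while the nonlinear correction $\psi(s+i\tau)-\psi(s)$ must be controlled uniformly in $s$ via the almost periodicity of $\psi$ and then absorbed through the uniform continuity of $f$. That both constraints can be met simultaneously along a single relatively dense set of $\tau$'s is exactly the content of \cref{lemma:relatively-dense-intersection}, which is tailor-made for this step.
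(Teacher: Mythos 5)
Your proof is correct and follows essentially the same route as the paper's: case split on whether $\psi$ is constant, invoke \cref{lemma:psi-image-cn} and \cref{lemma:psi-image} to push $\psi(\bC_\kappa)$ into a smaller half-plane, invoke \cref{lemma:relatively-dense-intersection} to find a relatively dense set of $\tau$'s that are simultaneously $\delta$-translation numbers of $\psi$ and give $a\tau\in E_f$, and close with a two-term triangle inequality combining the almost periodicity of $f$ with its uniform continuity. The only cosmetic differences are the choice of intermediate point in the triangle inequality (you use $\varphi(s)+ai\tau$, the paper uses $as+\psi(s+i\tau)$, both of which land safely inside the relevant half-plane) and the fact that you spell out the constant-$\psi$ case while the paper leaves it to the reader. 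One tiny technicality: \cref{lemma:relatively-dense-intersection} is stated with a single common $\varepsilon$, so to conclude that $E_\psi(\delta)\cap a^{-1}E_f(\varepsilon/2)$ is relatively dense you should apply it with $\varepsilon'=\min\{\delta,\varepsilon/2\}$ (as the paper does) and then use monotonicity of the translation-number sets; this is trivial but worth a word.
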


\begin{proof}
    We consider only the case when $a>0$ and $\psi$ is non-constant; the remaining cases are similar and left to the reader. Fix $f\in H^\infty_\ap(\bC_0)$. That $f\circ\varphi\in H^\infty(\bC_0)$ is clear, and so we only need to show almost periodicity on some half-plane. Let $\kappa>0$. We show that $f\circ\varphi$ is almost periodic on $\bC_\kappa$. By \cref{lemma:psi-image} and \cref{lemma:psi-image-cn} there exists some $\nu>0$ such that $\psi(\bC_\kappa)\subseteq\bC_\nu$. As $f$ is uniformly continuous on $\bC_\nu$, we can find some $\delta>0$ such that if $s_1,s_2\in\bC_\nu$ satisfy $\abs{s_1-s_2}\leq\delta$, then
    \begin{equation}\label{eq:uniform-cont}
        \abs{f(s_1)-f(s_2)}\leq\frac{\varepsilon}{2}.
    \end{equation}
    Set $\varepsilon'=\min\{\delta,\varepsilon/2\}$ and let $\tau\in E_{\psi\vert_{\bC_\kappa}}(\varepsilon')\cap\frac{1}{a}E_{f\vert_{\bC_\nu}}(\varepsilon')$. Then $\tau\in E_{\psi\vert_{\bC_\kappa}}(\delta)$ and $a\tau\in E_{f\vert_{\bC_\nu}}(\varepsilon/2)$, so that we, for all $s\in\bC_\kappa$, can estimate
    \begin{align*}
    \abs{V_\tau(f\circ\varphi)(s)-(f\circ\varphi)(s)}
        &=\abs{V_{a\tau}f(as+V_\tau\psi(s))-f(as+\psi(s))} \\
        &\leq\underbrace{\abs{V_{a\tau}f(as+V_\tau\psi(s))-f(as+V_\tau\psi(s))}}_{\leq\varepsilon/2\text{ as }a\tau\in E_{f\vert_{\bC_\nu}}(\varepsilon/2)} \\
        &\qquad+\underbrace{\abs{f(as+V_\tau\psi(s))-f(as+\psi(s))}}_{\leq\varepsilon/2\text{ by \cref{eq:uniform-cont} as }\tau\in E_{\psi\vert_{\bC_\kappa}}(\delta)} \\
        &\leq\varepsilon.
    \end{align*}
    It follows that any such $\tau$ is an $\varepsilon$-translation number of $f\circ\psi$ on $\bC_\kappa$, and as $\tau$ was an arbitrary element of a relatively dense set by \cref{lemma:relatively-dense-intersection}, the result follows.
\end{proof}

We next turn to proving the converse, i.e., that all symbols inducing bounded composition operators on $H^\infty_\ap(\bC_0)$ must be of the form \cref{eq:symbol-form}. Our main tool for this is the following theorem due to Bochner; a proof can be found in \cite{bochner_potential-theoretic_1963}*{Theorem 2}.

\begin{theorem}[Bochner]\label{thm:bochner}
    Let $f$ be an analytic almost periodic function on some half-plane $\bC_\kappa$. If $f(s)\neq0$ for all $s\in\bC_\kappa$, then there exists a $\lambda\geq0$ and an analytic almost periodic function $g$ on $\bC_\kappa$ with $g(+\infty)\neq0$ such that
    $$f(s)=e^{-\lambda s}g(s)$$
    for all $s\in\bC_\kappa$.
\end{theorem}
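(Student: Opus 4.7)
I split on whether $f(+\infty) \neq 0$, which exists by \cref{lemma:limit-at-infinity}. If $f(+\infty) \neq 0$, the choice $\lambda = 0$ and $g = f$ satisfies everything. So assume $f(+\infty) = 0$; then $f$ decays to $0$ uniformly as $\Re s \to \infty$, and the task is to recover the exact exponential rate of this decay.

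Set $M(\sigma) := \sup_{t \in \bR}\abs{f(\sigma + it)}$ for $\sigma > \kappa$. The Hadamard three-lines theorem applied to $f$ on vertical strips makes $\log M$ convex on $(\kappa, \infty)$, and boundedness of $f$ makes it bounded above. Convexity then forces the right derivative $(\log M)'_+$ to be non-decreasing and $\leq 0$, so
$$-\lambda := \lim_{\sigma \to \infty}(\log M)'_+(\sigma) \in (-\infty, 0]$$
exists, yielding $\lambda \in [0,\infty)$. Set $g(s) := e^{\lambda s} f(s)$; the plan is to show that $g$ is bounded and almost periodic on $\bC_\kappa$, with $g(+\infty) \neq 0$.

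Three things must be verified. (a) $g$ is bounded on $\bC_\kappa$: the definition of $\lambda$ gives $\log M(\sigma) + \lambda\sigma \leq C$ on some $[\sigma_0,\infty)$, so $g$ is bounded on $\bC_{\sigma_0}$, and together with boundedness of $f$ on $\bC_\kappa$ the maximum modulus principle for half-planes extends this to all of $\bC_\kappa$. (b) $g$ is almost periodic on $\bC_\kappa$: any $\tau$ that is simultaneously an $\varepsilon$-translation number of $f$ on $\bC_\kappa$ and satisfies $\operatorname{dist}(\lambda\tau, 2\pi\bZ) \leq \varepsilon$ is an $O(\varepsilon)$-translation number of $g$, and such $\tau$ form a relatively dense set by \cref{lemma:relatively-dense-intersection} applied to $f$ and to the almost periodic function $s \mapsto e^{\lambda s}$. (c) $g(+\infty) \neq 0$: unwinding Schnee's formula, this is equivalent to $a_\lambda(f) \neq 0$, i.e., to $\lambda$ being attained as the infimum of the Bohr spectrum of $f$. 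Here the non-vanishing hypothesis does essential work: since $\bC_\kappa$ is simply connected and $f$ is nowhere zero, $\log\abs{f}$ is harmonic on $\bC_\kappa$, and Harnack/Poisson-integral control on translates of a fixed disk converts the sup-norm decay rate $\lambda$ into a matching pointwise lower bound $\abs{f(\sigma + it)} \geq c e^{-\lambda \sigma}$, forcing a nonzero Bohr coefficient at $\lambda$.

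\textbf{Main obstacle.} The delicate step is (c), where one needs the \emph{exact} exponential rate rather than merely an upper bound — equivalently, that $\lambda$ is attained as the infimum of the Bohr spectrum rather than only approached from above. This is precisely where nonvanishing of $f$ is essential: it makes $\log\abs{f}$ harmonic and thereby opens up the potential-theoretic machinery linking sup-norm asymptotics to pointwise values. Steps (a) and (b) are essentially bookkeeping around convexity and common translation numbers.
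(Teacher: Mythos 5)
The paper does not prove this theorem: it is imported verbatim from Bochner's 1963 paper, cited in the text, so there is no internal argument of the paper to compare against. Judged on its own terms, your sketch correctly identifies where the substance lies, but the two steps you downplay are in fact where the argument breaks.

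Step (b) is not bookkeeping; as written it fails. If $\tau$ is an $\varepsilon$-translation number of $f$ on $\bC_\kappa$ and $\lvert e^{i\lambda\tau}-1\rvert\le\varepsilon$, then
\[
g(s+i\tau)-g(s)=e^{\lambda s}\bigl(e^{i\lambda\tau}f(s+i\tau)-f(s)\bigr),
\]
and the best bound available from these two hypotheses (together with $\lvert f\rvert\le Ce^{-\lambda\Re s}$, from boundedness of $g$) is
\[
\lvert g(s+i\tau)-g(s)\rvert\le e^{\lambda\Re s}\bigl(\lvert f(s+i\tau)-f(s)\rvert+\lvert e^{i\lambda\tau}-1\rvert\,\lvert f(s+i\tau)\rvert\bigr)\le \varepsilon\,e^{\lambda\Re s}+\varepsilon C.
\]
The term $\varepsilon\,e^{\lambda\Re s}$ is unbounded on $\bC_\kappa$, so one does not get an $O(\varepsilon)$-translation number of $g$. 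To salvage this you would need to know that the translation error $\lvert V_\tau f-f\rvert$ itself decays like $e^{-\lambda\Re s}$, which is precisely a reformulation of the conclusion that $g$ is almost periodic and therefore circular. (Also a minor slip: for $\lambda>0$ the map $s\mapsto e^{\lambda s}$ is unbounded on $\bC_\kappa$ and hence not almost periodic; the map you want to feed into \cref{lemma:relatively-dense-intersection} is $s\mapsto e^{-\lambda s}$.)

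Step (c) is where the theorem actually lives, and the Harnack/Poisson gesture does not close it. You need $\log M(\sigma)+\lambda\sigma$ to be bounded below (equivalently $\inf\lvert g\rvert>0$, which in turn makes (b) tractable and gives $g(+\infty)\ne0$). But convexity of $\log M$ with $(\log M)_+'\to-\lambda$ only forces $\log M(\sigma)+\lambda\sigma$ to be non-increasing; nothing in that calculus prevents it from drifting to $-\infty$ — for instance $\log M(\sigma)=-\lambda\sigma-\log\sigma$ is convex with derivative $\to-\lambda$ yet $\log M(\sigma)+\lambda\sigma\to-\infty$. Ruling this out is exactly where the almost periodicity of $f$ (and not merely its nonvanishing) must do work, and it is the genuine content of Bochner's potential-theoretic argument; as written, your sketch asserts the needed pointwise lower bound $\lvert f(\sigma+it)\rvert\ge c\,e^{-\lambda\sigma}$ rather than deriving it. In short, the overall decomposition ($\lambda$ from the asymptotic slope of $\log M$, then $g=e^{\lambda s}f$) is the right skeleton, but both (b) and (c) are left with the essential difficulty unresolved.
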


We will also need the following lemma.

\begin{lemma}\label{lemma:ap-exponent}
    Let $\psi$ be an analytic function on $\bC_0$ and $\lambda>0$ such that $g=e^{-\lambda\psi}$ is almost periodic on $\bC_\kappa$ for some $\kappa>0$ and $g(+\infty)\neq0$. Then $\psi$ is almost periodic on $\bC_{\kappa'}$ for some $\kappa'>0$.
\end{lemma}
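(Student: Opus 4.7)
The plan is to invert the exponential on a half-plane where $g$ is bounded away from zero, using a continuous branch of the logarithm, and then show that $\psi$ agrees with $-\tfrac{1}{\lambda}\log g$ up to an additive constant on that half-plane. The first step is to apply \cref{lemma:limit-at-infinity} to $g$: since $g(+\infty)\neq 0$, there exists $\kappa'\geq\kappa$ such that
\[
    \sup_{s\in\bC_{\kappa'}}\abs{g(s)-g(+\infty)}<\frac{\abs{g(+\infty)}}{2}.
\]
Let $D$ denote the open disk of radius $\abs{g(+\infty)}/2$ centered at $g(+\infty)$. Then $g(\bC_{\kappa'})\subseteq D$, and since $0\notin\overline D$, there is a continuous (in fact analytic) branch $L\colon\overline D\to\bC$ of the logarithm, which is uniformly continuous on $\overline D$.

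Next, I would verify that $L\circ g$ is almost periodic on $\bC_{\kappa'}$. This is the routine fact that an almost periodic function composed (from the outside) with a uniformly continuous function is almost periodic: if $\omega$ is a modulus of continuity for $L$ on $\overline D$, then any $\eta$-translation number of $g$ on $\bC_{\kappa'}$ is automatically an $\omega(\eta)$-translation number of $L\circ g$, and boundedness and uniform continuity of $L\circ g$ transfer similarly. Hence $L\circ g$ is an analytic almost periodic function on $\bC_{\kappa'}$.

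It then remains to recover $\psi$ from $L\circ g$. On $\bC_{\kappa'}$, the identities $g=e^{-\lambda\psi}$ and $g=e^{L\circ g}$ yield
\[
    e^{-\lambda\psi(s)-L(g(s))}=1,
\]
so the continuous function $h(s)=-\lambda\psi(s)-L(g(s))$ takes values in the discrete set $2\pi i\bZ$. Since $\bC_{\kappa'}$ is connected, $h$ must be constant, say $h\equiv 2\pi i n$. Therefore
\[
    \psi(s)=-\frac{1}{\lambda}L(g(s))-\frac{2\pi i n}{\lambda}
\]
on $\bC_{\kappa'}$, exhibiting $\psi$ as an affine transformation of the almost periodic function $L\circ g$, which is itself almost periodic. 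The only substantive point to be careful about is the constancy of $h$, which hinges on the fact that the logarithm lifts a continuous map into a connected set along a connected domain; once a single branch $L$ has been chosen on $D$, the multi-valuedness of the logarithm evaporates.
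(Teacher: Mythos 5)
Your proof is correct and takes essentially the same approach as the paper: both use \cref{lemma:limit-at-infinity} to pass to a half-plane where $g$ stays in a disk bounded away from $0$, choose a branch of the logarithm there, observe that $\psi+\tfrac{1}{\lambda}\log g$ is constant, and transfer almost periodicity from $g$ to $\psi$ via the uniform continuity of $\log$ on that disk (the paper makes this last step explicit as a Lipschitz estimate $\abs{\psi(s_1)-\psi(s_2)}\leq\abs{g(s_1)-g(s_2)}/(\lambda r)$, whereas you appeal to the general modulus-of-continuity argument).
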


\begin{proof}
    Set $r=\abs{g(+\infty)}/2$ and use \cref{lemma:limit-at-infinity} to choose $\kappa'\geq\kappa$ such that
    $$e^{-\lambda\psi(s)}\in B_r(g(+\infty))$$
    for all $s\in\bC_{\kappa'}$, where $B_r(g(+\infty))$ denotes the open ball in $\bC$ of radius $r$ centered at $g(+\infty)$. We claim that $\psi$ is almost periodic on $\bC_{\kappa'}$. As $B_r(g(+\infty))$ is bounded away from zero, we can let $\log$ denote the principal branch of the logarithm defined on $B_r(g(+\infty))$. Observe then that $\psi+\frac{1}{\lambda}\log g$ is constant on $\bC_{\kappa'}$, by which we can estimate
    \begin{equation}\label{eq:estimate-psi-g}
        \abs{\psi(s_1)-\psi(s_2)}=\frac{1}{\lambda}\abs{\log g(s_1)-\log g(s_2)}=\frac{1}{\lambda}\abs{\int_{g(s_2)}^{g(s_1)}\frac{\dd\zeta}{\zeta}}\leq\frac{\abs{g(s_1)-g(s_2)}}{\lambda r}
    \end{equation}
    for all $s_1,s_2\in\bC_{\kappa'}$. It is now clear that the almost periodicity of $g$ on $\bC_{\kappa'}$ together with \cref{eq:estimate-psi-g} implies that $\psi$ is almost periodic on $\bC_{\kappa'}$.
\end{proof}

With this we can now prove that the form \cref{eq:symbol-form} is necessary to induce a bounded composition operator on $H^\infty_\ap(\bC_0)$.

\begin{lemma}\label{lemma:necessity}
    Let $\varphi:\bC_0\to\bC_0$ be an analytic function such that $e^{-\lambda\varphi}\in H^\infty_\ap(\bC_0)$ for some $\lambda>0$. Then $\varphi$ is of the form $\varphi(s)=as+\psi(s)$ for some $a\geq0$ and some analytic function $\psi$ that is almost periodic on $\bC_\kappa$ for all $\kappa>0$.
\end{lemma}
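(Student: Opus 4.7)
The overall plan is to use Bochner's theorem (\cref{thm:bochner}) to peel off a linear term from $\varphi$, and then to apply the strong Bohr theorem (\cref{thm:strong-bohr}) to upgrade the almost periodicity of the resulting remainder from a single half-plane to all of them.

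First I would invoke \cref{cor:bohr-analogue} to observe that $e^{-\lambda\varphi}$ is almost periodic on every $\bC_{\kappa_0}$ with $\kappa_0>0$. Since this function is non-vanishing, fixing any such $\kappa_0$ and applying \cref{thm:bochner} yields a $\mu\geq 0$ and an analytic almost periodic function $g$ on $\bC_{\kappa_0}$ with $g(+\infty)\neq 0$ such that $e^{-\lambda\varphi(s)}=e^{-\mu s}g(s)$ on $\bC_{\kappa_0}$. Setting $a=\mu/\lambda$ and $\psi(s)=\varphi(s)-as$ makes $\psi$ analytic on $\bC_0$ and $g=e^{-\lambda\psi}$ on $\bC_{\kappa_0}$; since the right-hand side has the analytic extension $e^{\mu s}e^{-\lambda\varphi(s)}$ to all of $\bC_0$, this identity persists throughout $\bC_0$. \cref{lemma:ap-exponent} then immediately produces some $\kappa'>0$ such that $\psi$ is almost periodic on $\bC_{\kappa'}$.

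The main remaining task, and the place where I expect the bulk of the work to lie, is to extend this almost periodicity from $\bC_{\kappa'}$ to every $\bC_\nu$ with $\nu>0$. I would achieve this by applying \cref{thm:strong-bohr} with $\kappa=0$ and $\theta=\kappa'$, for which it suffices to show that $\psi$ omits two values on $\bC_0$. I would in turn deduce this from the boundedness of $g=e^{-\lambda\psi}$ on all of $\bC_0$: that boundedness splits into two routine estimates, namely that $g$ is bounded on $\bC_{\kappa_0}$ because it is almost periodic there, while on the complementary strip $0<\Re s\leq\kappa_0$ the inequality $\abs{g(s)}=e^{\mu\Re s}\abs{e^{-\lambda\varphi(s)}}\leq e^{\mu\kappa_0}$ holds since $\Re\varphi>0$. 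A uniform bound $\abs{g}\leq M$ on $\bC_0$ then forces $\Re\psi\geq-\lambda^{-1}\log M$ everywhere, so $\psi(\bC_0)$ lies in a closed right half-plane of $\bC$ and in particular avoids two values. \cref{thm:strong-bohr} now delivers almost periodicity of $\psi$ on every $\bC_\nu$ with $\nu>0$, completing the argument.
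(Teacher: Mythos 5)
Your proof is correct and follows essentially the same route as the paper: Bochner's factorization (\cref{thm:bochner}) to extract the linear part $as$, \cref{lemma:ap-exponent} to get almost periodicity of $\psi$ on a single half-plane, and \cref{thm:strong-bohr} to propagate it to every $\bC_\nu$ with $\nu>0$. The one place you deviate is in showing $\psi$ omits two values on $\bC_0$: the paper cites \cref{lemma:psi-image-cn}, which uses the maximum modulus principle to obtain the sharper conclusion $\psi(\bC_0)\subseteq\bC_0$ and forces a case distinction on whether $\psi$ is constant, whereas you bound $\abs{g}=e^{\mu\Re s}e^{-\lambda\Re\varphi(s)}\leq e^{\mu\kappa_0}$ directly on the strip $0<\Re s\leq\kappa_0$, combine it with boundedness on $\bC_{\kappa_0}$, and deduce only that $\Re\psi\geq-\lambda^{-1}\log M$ on $\bC_0$. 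Your bound is weaker but avoids the constancy case split entirely (which in the paper is actually phrased in terms of $\varphi$ rather than $\psi$, a point that needs a small patch), and it is all that \cref{thm:strong-bohr} requires, since a closed right half-plane in $\bC$ omits two values. In short: same skeleton, a marginally more economical final step.
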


\begin{proof}
    As $e^{-\lambda\varphi}$ is analytic, non-vanishing, and almost periodic on $\bC_\kappa$ for all $\kappa>0$, we can by \cref{thm:bochner} find some $\lambda_0\geq0$ and some analytic function $g$, almost periodic on $\bC_\kappa$ for some $\kappa>0$, such that $g(+\infty)\neq0$ and
    $$f(s)=e^{-\lambda_0s}g(s)$$
    for all $s\in\bC_0$. Set $a=\lambda_0/\lambda$ and
    $$\psi(s)=\varphi(s)-as.$$
    Noting that $g=e^{-\lambda\psi}$, it follows by \cref{lemma:ap-exponent} that $\psi$ is almost periodic on $\bC_\kappa$ for some $\kappa>0$. If $\varphi$ is constant, then it is clearly almost periodic on $\bC_\kappa$ for all $\kappa>0$, and if not, then by \cref{lemma:psi-image-cn} we have that $\psi(\bC_0)\subseteq\bC_0$, so by \cref{thm:strong-bohr} we have that $\psi$ is almost periodic on $\bC_\kappa$ for all $\kappa>0$. Thus $\varphi$ is of the desired form, and the result follows.
\end{proof}

Combining our results we prove now \cref{thm:bounded-composition-characterization}, which we restate for convenience.

\boundedcomposition*

\begin{proof}
    It is clear that \labelcref{item:bounded-composition} and \labelcref{item:composition-is-in-hinftyap} are equivalent, and that \labelcref{item:composition-is-in-hinftyap} implies \labelcref{item:exponentials-are-in-hinftyap}. By adapting the argument in the proof of \cite{bayart_composition_2021}*{Lemma 1}, one shows that \labelcref{item:exponentials-are-in-hinftyap} implies that $\varphi$ is analytic, and in particular we get that \labelcref{item:exponentials-are-in-hinftyap} implies \labelcref{item:exponential-and-analytic}. That \labelcref{item:exponential-and-analytic} implies \labelcref{item:phi-of-form} follows by \cref{lemma:necessity}, and finally, that \labelcref{item:phi-of-form} implies \labelcref{item:composition-is-in-hinftyap} follows from \cref{lemma:sufficiency}.
\end{proof}

\begin{remark}
    It is worth noting that the assumption of analyticity in \labelcref{item:exponential-and-analytic} cannot be dropped. Indeed take any analytic $\varphi$ with $e^{-\lambda\varphi}\in H^\infty_\ap(\bC_0)$ for some $\lambda>0$, and define $\phi(s)=\varphi(s)+2\pi in(s)/\lambda$ for some non-constant integer-valued function $n$. Then $\phi$ is not analytic, but $e^{-\lambda\phi}=e^{-\lambda\varphi}\in H^\infty_\ap(\bC_0)$.
\end{remark}

Before moving on to compactness, we will use \cref{thm:bounded-composition-characterization} to characterize the symbols $\varphi:\bC_0\to\bC_0$ which induce bounded composition operators $C_\varphi:A_\ap(\bC_0)\to A_\ap(\bC_0)$. The analogous characterization was done for $\sA(\bC_0)$ by Contreras, Gómez-Cabello and Rodríguez-Piazza, who showed that the symbols $\varphi:\bC_0\to\bC_0$ inducing bounded composition operators $C_\varphi:\sA(\bC_0)\to\sA(\bC_0)$ are precisely those that induce bounded composition operators on $H^\infty_\ap(\bC_0)$, and in addition are uniformly continuous on the set
$$\{s\in\bC_0:\Re\varphi(s)<r\}$$
for all $r>0$ (see \cite{contreras_composition_2024}*{Theorem 2.13}). We will show that we have the analogous characterization in our setting by adapting their argument. The main tool need is a purely function theoretic lemma, which was baked into the argument of \cite{contreras_composition_2024}*{Theorem 2.13}. We give a different proof.

\begin{lemma}\label{lemma:uniform-continuity}
    Let $f:\bC_0\to\bC_0$ be analytic, and suppose that $e^{-\lambda f}$ is uniformly continuous for some $\lambda>0$. Then $f$ is uniformly continuous on the set
    \begin{equation}\label{eq:sublevel-set}
        \{s\in\bC_0:\Re f(s)<r\}
    \end{equation}
    for all $r>0$.
\end{lemma}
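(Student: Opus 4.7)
The plan is to exploit the identity $-\lambda f(s) = \log g(s)$, where $g = e^{-\lambda f}$ and the branch of logarithm is chosen carefully along the segment joining the two points. Setting $c = e^{-\lambda r}$, observe first that the sublevel set $\{s \in \bC_0 : \Re f(s) < r\}$ coincides with $\{s \in \bC_0 : \abs{g(s)} > c\}$, so on this set $g$ is bounded away from zero.

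Given $\varepsilon > 0$, I would use the uniform continuity of $g$ on $\bC_0$ to pick $\delta > 0$ so that $\abs{s - s'} < \delta$ forces $\abs{g(s) - g(s')} < \min\{c/2,\, c\lambda\varepsilon/2\}$. Take $s_1, s_2$ in the sublevel set with $\abs{s_1 - s_2} < \delta$. Since $\bC_0$ is convex, the straight segment $[s_1, s_2]$ lies in $\bC_0$, and the uniform continuity bound guarantees that $g$ maps this segment into the open disk $B_{c/2}(g(s_1))$. Because $\abs{g(s_1)} > c$, this disk is simply connected and avoids the origin, so it admits a branch $\Lambda$ of the logarithm normalised by $\Lambda(g(s_1)) = -\lambda f(s_1)$.

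The key observation is that $\Lambda \circ g$ and $-\lambda f$ are both continuous branches of $\log g$ along the segment $[s_1, s_2]$ and they agree at $s_1$ by construction; since any two continuous logarithms of a non-vanishing continuous function on a connected set differ by a locally constant integer multiple of $2\pi i$, they must coincide throughout the segment. In particular $-\lambda f(s_2) = \Lambda(g(s_2))$, and integrating $\Lambda'(z) = 1/z$ along the segment from $g(s_2)$ to $g(s_1)$ inside $B_{c/2}(g(s_1))$, on which $\abs{1/z} \leq 2/c$, yields
$$\abs{f(s_1) - f(s_2)} = \frac{1}{\lambda}\abs{\Lambda(g(s_1)) - \Lambda(g(s_2))} \leq \frac{2}{c\lambda}\abs{g(s_1) - g(s_2)} < \varepsilon.$$

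The main technical point is the matching of branches: verifying that the principal branch of $\log$ on the disk, pulled back by $g$, agrees with $-\lambda f$ not merely at the base point but on the whole segment. This is precisely the uniqueness of continuous logarithms on a connected set once one point of agreement is fixed, and it is what forces the use of a convex path in $\bC_0$ rather than a path staying in the sublevel set (which need not even be connected). With this matching secured, the argument reduces to the elementary Lipschitz estimate for the logarithm on a disk bounded away from zero, combined with the uniform continuity of $g$.
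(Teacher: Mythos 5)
Your argument is correct and follows the same route as the paper's proof: write $g = e^{-\lambda f}$, identify the sublevel set with $\{|g| > c\}$, use the uniform continuity of $g$ to confine $g$ near $g(s_1)$ on a small neighborhood, recover $f$ locally as a branch of $-\frac{1}{\lambda}\log g$, and conclude via the Lipschitz estimate for $\log$ on a disk bounded away from the origin. The only stylistic difference is that you carry out the branch-matching along the segment $[s_1,s_2]$ (using convexity of $\bC_0$) where the paper works on the disk $B_\delta(s_1)$ intersected with $\bC_0$; this is a harmless variation.
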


\begin{proof}
    Suppose without loss of generality that $\lambda=1$, and set $g=e^{-f}$. Fix $r>0$ and denote by $\Omega_r$ the set in \cref{eq:sublevel-set}. Let $\varepsilon>0$. Set $R=e^{-r}$ so that $s\in\Omega_r$ if and only if $\abs{g(s)}>R$. As $g$ is uniformly continuous, we can find some $\delta>0$ such that if $s_1,s_2\in\bC_0$ satisfy $\abs{s_1-s_2}<\delta$, then
    $$\abs{g(s_1)-g(s_2)}\leq\frac{R}{2}\min\{1,\varepsilon\}.$$
    Fix any $s_1,s_2\in\Omega_r$ with $\abs{s_1-s_2}<\delta$. Observe first that if $s\in B_\delta(s_1)$, then $g(s)\in B_{R/2}(g(s_1))$. As $B_{R/2}(g(s_1))$ is bounded away from zero, we can let $\log$ denote the principal branch of the logarithm defined on $B_{R/2}(g(s_1))$. Observe that then $f+\log g$ is constant on $B_\delta(s_1)$, by which we can estimate
    $$\abs{f(s_1)-f(s_2)}=\abs{\log g(s_1)-\log g(s_2)}=\abs{\int_{g(s_2)}^{g(s_1)}\frac{\dd\zeta}{\zeta}}\leq\frac{2}{R}\abs{g(s_1)-g(s_2)}\leq\varepsilon.$$
    The result follows.
\end{proof}

With this we now prove our characterization theorem on the bounded composition operators on $A_\ap(\bC_0)$.

\begin{theorem}\label{thm:bounded-composition-characterization-algebra}
    Let $\varphi:\bC_0\to\bC_0$ be a function. The following are equivalent:
    \begin{enumerate}[(i)]
        \item\label{item:bounded-composition-algebra} $\varphi$ induces a bounded composition operator $C_\varphi:A_\ap(\bC_0)\to A_\ap(\bC_0)$.
        \item\label{item:composition-is-in-aap} $f\circ\varphi\in A_\ap(\bC_0)$ for all $f\in A_\ap(\bC_0)$.
        \item\label{item:exponentials-are-in-aap} $e^{-\lambda\varphi}\in A_\ap(\bC_0)$ for all $\lambda>0$.
        \item\label{item:exponential-and-analytic-algebra} $e^{-\lambda\varphi}\in A_\ap(\bC_0)$ for some $\lambda>0$, and $\varphi$ is analytic.
        \item\label{item:phi-of-form-algebra} $\varphi$ is of the form $\varphi(s)=as+\psi(s)$ for some $a\geq0$ and some analytic function $\psi$ that is almost periodic on $\bC_\kappa$ for all $\kappa>0$, and $\varphi$ is uniformly continuous on the set
        $$\{s\in\bC_0:\Re\varphi(s)<r\}$$
        for all $r>0$.
    \end{enumerate}
\end{theorem}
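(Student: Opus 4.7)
The plan is to establish the cycle \labelcref{item:bounded-composition-algebra} $\Leftrightarrow$ \labelcref{item:composition-is-in-aap} $\Rightarrow$ \labelcref{item:exponentials-are-in-aap} $\Rightarrow$ \labelcref{item:exponential-and-analytic-algebra} $\Rightarrow$ \labelcref{item:phi-of-form-algebra} $\Rightarrow$ \labelcref{item:composition-is-in-aap}, following the same scheme as \cref{thm:bounded-composition-characterization}. The equivalence of \labelcref{item:bounded-composition-algebra} and \labelcref{item:composition-is-in-aap} is the closed graph theorem, since uniform convergence implies pointwise convergence. The step \labelcref{item:composition-is-in-aap} $\Rightarrow$ \labelcref{item:exponentials-are-in-aap} is immediate from $e^{-\lambda s}\in A_\ap(\bC_0)$, and \labelcref{item:exponentials-are-in-aap} $\Rightarrow$ \labelcref{item:exponential-and-analytic-algebra} follows from the corresponding step in \cref{thm:bounded-composition-characterization} since $A_\ap(\bC_0)\subseteq H^\infty_\ap(\bC_0)$. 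For \labelcref{item:exponential-and-analytic-algebra} $\Rightarrow$ \labelcref{item:phi-of-form-algebra}, \cref{thm:bounded-composition-characterization} supplies the decomposition $\varphi(s)=as+\psi(s)$, while the required uniform continuity of $\varphi$ on each sublevel set $\{\Re\varphi<r\}$ is exactly the content of \cref{lemma:uniform-continuity} applied to $f=\varphi$, since $e^{-\lambda\varphi}\in A_\ap(\bC_0)$ is uniformly continuous on $\bC_0$.

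The principal new content is the implication \labelcref{item:phi-of-form-algebra} $\Rightarrow$ \labelcref{item:composition-is-in-aap}. Given $\varphi$ of the stated form and $f\in A_\ap(\bC_0)$, \cref{thm:bounded-composition-characterization} already yields $f\circ\varphi\in H^\infty_\ap(\bC_0)$, so only uniform continuity of $f\circ\varphi$ on $\bC_0$ needs to be verified. Fix $\varepsilon>0$. By \cref{cor:bohr-analogue}, $f$ is almost periodic on every $\bC_\kappa$ with $\kappa>0$, so \cref{lemma:limit-at-infinity} furnishes $R>0$ with $\abs{f(s)-f(+\infty)}<\varepsilon/3$ whenever $\Re s>R$. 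Set $U=\{s\in\bC_0:\Re\varphi(s)<R+2\}$; by hypothesis $\varphi$ is uniformly continuous on $U$, and combining this with uniform continuity of $f$ on $\bC_0$ produces $\delta>0$ such that $\abs{f(\varphi(s_1))-f(\varphi(s_2))}<\varepsilon/3$ whenever $s_1,s_2\in U$ satisfy $\abs{s_1-s_2}<\delta$.

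The main obstacle, and the reason for the slightly inflated $R+2$ in the definition of $U$, is handling pairs $s_1,s_2\in\bC_0$ with $\abs{s_1-s_2}<\delta$ for which exactly one point lies in $U$. If both are in $U$ the estimate above suffices; if both are outside $U$ then $\Re\varphi(s_i)\geq R+2>R$, so each $f(\varphi(s_i))$ lies within $\varepsilon/3$ of $f(+\infty)$. In the mixed case, convexity of $\bC_0$ together with continuity of $\Re\varphi$ along the segment from $s_1$ to $s_2$ produces an intermediate point $s_3$ with $\Re\varphi(s_3)=R+1$, so that simultaneously $s_3\in U$ (activating the uniform continuity estimate between $s_1$ and $s_3$) and $\Re\varphi(s_3)>R$ (so $f(\varphi(s_3))$ lies within $\varepsilon/3$ of $f(+\infty)$). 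A triangle inequality through $f(\varphi(s_3))$ and $f(+\infty)$ then gives $\abs{f(\varphi(s_1))-f(\varphi(s_2))}<\varepsilon$, completing the uniform continuity argument and thereby the proof.
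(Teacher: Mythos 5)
Your overall plan and all the supporting lemmas match the paper's proof almost exactly: the implication cycle is the same, the reductions to \cref{thm:bounded-composition-characterization} and \cref{lemma:uniform-continuity} are the same, and the new work in \labelcref{item:phi-of-form-algebra} $\Rightarrow$ \labelcref{item:composition-is-in-aap} uses the same three ingredients the paper uses (uniform continuity of $f$, \cref{lemma:limit-at-infinity} for the tail behaviour, and uniform continuity of $\varphi$ on a sublevel set) with the same three-case split and intermediate-value-theorem interpolation.

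There is one small slip in the mixed case that you should patch. You take $U=\{\Re\varphi<R+2\}$ and in the case $s_1\in U$, $s_2\notin U$ you assert that the segment from $s_1$ to $s_2$ contains a point $s_3$ with $\Re\varphi(s_3)=R+1$. This does not follow from the intermediate value theorem alone when $R+1\le\Re\varphi(s_1)<R+2$, because then $R+1$ need not lie between $\Re\varphi(s_1)$ and $\Re\varphi(s_2)$. The fix is immediate: in that sub-case both $\Re\varphi(s_1)>R$ and $\Re\varphi(s_2)>R$, so each $f(\varphi(s_i))$ is already within $\varepsilon/3$ of $f(+\infty)$ and the estimate closes without any intermediate point. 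Equivalently, you could re-organize the three cases around the threshold $\Re\varphi<R+1$ versus $\Re\varphi\ge R+1$ (rather than membership in $U$), reserving $U=\{\Re\varphi<R+2\}$ purely as the domain of uniform continuity of $\varphi$; then the intermediate value theorem applies unconditionally in the mixed case, exactly as in the paper's split by $\Omega_{\kappa/2}$, $\Omega_\kappa\setminus\Omega_{\kappa/2}$ and $\bC_0\setminus\Omega_\kappa$. With that adjustment the argument is complete and is, in substance, the proof in the paper.
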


\begin{proof}
    That \labelcref{item:bounded-composition-algebra} and \labelcref{item:composition-is-in-aap} are equivalent is clear, and so is the fact that \labelcref{item:composition-is-in-aap} implies \labelcref{item:exponentials-are-in-aap}. That \labelcref{item:exponentials-are-in-aap} implies \labelcref{item:exponential-and-analytic-algebra} follows by applying the corresponding implication in \cref{thm:bounded-composition-characterization}. That \labelcref{item:exponential-and-analytic-algebra} implies \labelcref{item:phi-of-form-algebra} follows by applying the corresponding implication in \cref{thm:bounded-composition-characterization} together with \cref{lemma:uniform-continuity}. We show that \labelcref{item:phi-of-form-algebra} implies \labelcref{item:composition-is-in-aap}. Suppose \labelcref{item:phi-of-form-algebra} holds, and let $f\in A_\ap(\bC_0)$. By \cref{thm:bounded-composition-characterization} we have that $f\circ\varphi\in H^\infty_\ap(\bC_0)$, and so we only need to show that $f\circ\varphi$ is uniformly continuous. Let $\varepsilon>0$. As $f$ is uniformly continuous, we can find some $\rho>0$ such that if $s_1,s_2\in\bC_0$ satisfy $\abs{s_1-s_2}\leq\rho$, then
    \begin{equation}\label{eq:f-uniform}
        \abs{f(s_1)-f(s_2)}\leq\varepsilon.
    \end{equation}
    Next, use \cref{lemma:limit-at-infinity} to find a $\kappa>0$ such that
    \begin{equation}\label{eq:f-infinity}
        \sup_{s\in\overline\bC_\kappa}\abs{f(s)-f(+\infty)}\leq\varepsilon.
    \end{equation}
    For $r>0$, write $\Omega_r=\{s\in\bC_0:\Re\varphi(s)<r\}$. As $\varphi$ is uniformly continuous on $\Omega_{\kappa/2}$, we can find some $\delta>0$ such that if $s_1,s_2\in\Omega_{\kappa/2}$ satisfy $\abs{s_1-s_2}\leq\delta$, then
    \begin{equation}\label{eq:phi-uniform}
        \abs{\varphi(s_1)-\varphi(s_2)}\leq\rho.
    \end{equation}
    Take now any $s_1,s_2\in\bC_0$ with $\abs{s_1-s_2}\leq\delta$. If $s_1,s_2\in\Omega_\kappa$, then \cref{eq:f-uniform} and \cref{eq:phi-uniform} imply that
    $$\abs{f(\psi(s_1))-f(\psi(s_2))}\leq\varepsilon.$$
    If $s_1,s_2\in\bC_0\setminus\Omega_{\kappa/2}$, then $\varphi(s_1),\varphi(s_2)\in\overline\bC_\kappa$, so \cref{eq:f-infinity} implies that
    $$\abs{f(\varphi(s_1))-f(\varphi(s_2))}\leq\abs{f(\varphi(s_1))-f(+\infty)}+\abs{f(\varphi(s_2))-f(+\infty)}\leq2\varepsilon.$$
    Finally, suppose $s_1\in\Omega_{\kappa/2}$ and $s_2\in\bC_0\setminus\Omega_\kappa$. By an application of the intermediate value theorem to the function $\lambda\mapsto\Re\varphi(\lambda s_1+(1-\lambda)s_2)$ we can find some $\lambda\in[0,1]$ such that $s'=\lambda s_1+(1-\lambda)s_2\in\Omega_\kappa\setminus\Omega_{\kappa/2}$. As $\abs{s_1-s'}\leq\abs{s_1-s_2}\leq\delta$, we can then use the previous two cases to estimate
    $$\abs{f(\varphi(s_1))-f(\varphi(s_2))}\leq\abs{f(\varphi(s_1))-f(\varphi(s'))}+\abs{f(\varphi(s'))-f(\varphi(s_2))}\leq3\varepsilon.$$
    We have thus shown that any $s_1,s_2\in\bC_0$ with $\abs{s_1-s_2}\leq\delta$ satisfy
    $$\abs{f(\varphi(s_1))-f(\varphi(s_2))}\leq3\varepsilon,$$
    and as $\varepsilon>0$ was arbitrary, this shows that $f\circ\varphi$ is uniformly continuous. Thus $f\circ\varphi\in A_\ap(\bC_0)$, and the result follows.
\end{proof}

With boundedness taken care of, we now move on to compactness. We start by characterizing the compact composition operators on $H^\infty_\ap(\bC_0)$; that is, we prove \cref{thm:compact-composition}, which we restate for convenience.

\compactcomposition*

\begin{proof}
    Suppose first that \labelcref{item:compactly-contained} holds. Let $\{f_n\}_{n\in\bZ^+}$ be a uniformly bounded sequence in $H^\infty_\ap(\bC_0)$. By the classical Montel theorem (e.g. \cite{rudin_real_1987}*{Theorem 14.6}), we can find an $f\in H^\infty(\bC_0)$ and a subsequence $\{f_{n_k}\}_{k\in\bZ^+}$ that converges uniformly to $f$ on all compact subsets of $\bC_0$.  As $\varphi(\bC_0)$ is compactly contained in $\bC_0$, this then implies that $\{f_{n_k}\circ\varphi\}_{k\in\bZ^+}$ converges uniformly to $f\circ\varphi\in H^\infty_\ap(\bC_0)$. This shows \labelcref{item:compact}.

    For the reverse implication, suppose \labelcref{item:compact} holds. Consider the two sequences $\{f_n\}_{n\in\bZ^+}$ and $\{g_n\}_{n\in\bZ^+}$ in $H^\infty_\ap(\bC_0)$ defined by
    $$f_n(s)=e^{-s/n}-1,\quad g_n(s)=e^{-ns}.$$
    These two sequences are clearly uniformly bounded and both converge pointwise to zero. As $C_\varphi$ is compact, the sequences $\{f_n\circ\varphi\}_{n\in\bZ^+}$ and $\{g_n\circ\varphi\}_{n\in\bZ^+}$ must then each have a subsequence converging uniformly to the zero function. We show first that $\Im\varphi$ is bounded. Suppose it was not. Then, for each $n\in\bZ^+$, we can find some $s_n\in\bC_0$ such that
    $$\abs{\Im\varphi(s_n)}=\pi n+2\pi nk_n$$
    for some $k_n\in\bZ^+$. Compute then
    $$\abs{f_n(s)}^2=e^{-2\Re s/n}+1-2e^{-\Re s/n}\cos\left(\frac{\abs{\Im s}}{n}\right).$$
    From this we see that
    $$\norm{f_n\circ\varphi}_\infty^2\geq\abs{f_n(\varphi(s_n))}^2=e^{-2\Re s_n/n}+1+2e^{-\Re s_n/n}\geq1$$
    for all $n\in\bZ^+$, contradicting that $\{f_n\circ\varphi\}_{n\in\bZ^+}$ has a uniformly convergent subsequence. We show next that $\Re\varphi$ is bounded from above. Suppose it was not, and let $\{s_k\}_{k\in\bZ^+}$ be a sequence in $\bC_0$ with $\Re\varphi(s_k)\to\infty$ as $k\to\infty$. Then
    $$\norm{f_n\circ\varphi}_\infty\geq\limsup_{k\to\infty}\lvert e^{-\varphi(s_k)/n}-1\rvert\geq\lim_{k\to\infty}(1-e^{-\Re\varphi(s_k)/n})=1$$
    for all $n\in\bZ^+$, again contradicting that $\{f_n\circ\varphi\}_{n\in\bZ^+}$ has a uniformly convergent subsequence. We finally show that $\Re \varphi$ is bounded away from zero. Suppose it was not, and let $\{s_k\}_{k\in\bZ^+}$ be a sequence in $\bC_0$ such that $\Re\varphi(s_k)\to0$ as $k\to\infty$. Then
    $$\norm{g_n\circ\varphi}_\infty\geq\lim_{k\to\infty}\abs{g_n(\varphi(s_k))}=\lim_{k\to\infty}e^{-n\Re\varphi(s_k)}=1$$
    for all $n\in\bZ^+$, contradicting that $\{g_n\circ\varphi\}_{n\in\bZ^+}$ has a uniformly convergent subsequence. We have thus shown that $\varphi(\bC_0)$ lies in a bounded subset of $\bC_0$ that is bounded away from $\partial\bC_0$, which is equivalent to $\varphi(\bC_0)$ being compactly contained in $\bC_0$, which shows \labelcref{item:compactly-contained}.
\end{proof}

The above theorem is in contrast to the situation in $\sH^\infty$, where compactness of the composition operator $C_\varphi:\sH^\infty\to\sH^\infty$ is equivalent to the existence of some $\kappa>0$ such that $\varphi(\bC_0)\subseteq\bC_\kappa$ (see \cite{bayart_hardy_2002}*{Theorem 18}). We show now that in subspaces of $H^\infty_\ap(\bC_0)$ where uniform boundedness implies joint almost periodicity on some half-plane---such as $\sH^\infty$, $\sD_\ext^\infty(\lambda)$ for a frequency $\lambda$ with $L(\lambda)<\infty$, and $H^\infty_\lambda(\bC_0)$ for an arbitrary frequency $\lambda$---the assumption that $\varphi(\bC_0)\subseteq\bC_\kappa$ for some $\kappa>0$ is sufficient for $C_\varphi$ to be compact. This is a direct consequence of the strong Montel theorem holding in these spaces, and the proof uses \cref{thm:strong-montel} analogously to how Bayart used his strong Montel theorem in the proof of \cite{bayart_hardy_2002}*{Theorem 18}.

\begin{theorem}\label{thm:compact-composition-w-sufficiency}
    Let $W$ be a subspace of $H^\infty_\ap(\bC_0)$ with the property that any uniformly bounded family in $W$ is jointly almost periodic on some half-plane $\bC_\alpha$ with $\alpha>0$, and let $\varphi:\bC_0\to\bC_0$ induce a bounded composition operator $C_\varphi:W\to H^\infty_\ap(\bC_0)$. If $\varphi(\bC_0)\subseteq\bC_\kappa$ for some $\kappa>0$, then $C_\varphi$ is compact.
\end{theorem}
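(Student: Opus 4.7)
The plan is to show that $C_\varphi$ maps uniformly bounded sequences in $W$ to sequences in $H^\infty_\ap(\bC_0)$ admitting a uniformly convergent subsequence, which is equivalent to compactness. Since the hypothesis on $W$ gives joint almost periodicity for free, the strong Montel theorem will provide very strong control on the original sequence, and the compact containment $\varphi(\bC_0)\subseteq\bC_\kappa$ will allow us to transfer that control to the composed sequence on all of $\bC_0$.

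More precisely, I would start with an arbitrary uniformly bounded sequence $\{f_n\}_{n\in\bZ^+}$ in $W$. By the hypothesis on $W$, the family $\{f_n\}_{n\in\bZ^+}$ is jointly almost periodic on some half-plane $\bC_\alpha$ with $\alpha>0$, so condition \labelcref{item:jointly-ap-somewhere} of \cref{thm:strong-montel} holds. Applying \cref{thm:strong-montel}, I extract a subsequence $\{f_{n_k}\}_{k\in\bZ^+}$ that converges uniformly on $\bC_\beta$ for every $\beta>0$ to some function $f\in H^\infty_\ap(\bC_0)$. In particular, taking $\beta=\kappa$, the subsequence converges uniformly to $f$ on $\bC_\kappa$.

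Now I use the assumption $\varphi(\bC_0)\subseteq\bC_\kappa$ to estimate
$$\sup_{s\in\bC_0}\abs{f_{n_k}(\varphi(s))-f(\varphi(s))}\leq\sup_{w\in\bC_\kappa}\abs{f_{n_k}(w)-f(w)}\xrightarrow{k\to\infty}0,$$
so that $\{f_{n_k}\circ\varphi\}_{k\in\bZ^+}$ converges uniformly on $\bC_0$ to $f\circ\varphi$. Since each $f_{n_k}$ lies in $W$ and $C_\varphi$ maps $W$ into $H^\infty_\ap(\bC_0)$, the functions $f_{n_k}\circ\varphi$ all lie in $H^\infty_\ap(\bC_0)$; because $H^\infty_\ap(\bC_0)$ is closed in $H^\infty(\bC_0)$ (as noted after \cref{eq:subspaces}), the uniform limit $f\circ\varphi$ also belongs to $H^\infty_\ap(\bC_0)$. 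Hence $\{C_\varphi f_{n_k}\}_{k\in\bZ^+}$ converges in $H^\infty_\ap(\bC_0)$, which establishes compactness.

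There is no real obstacle here; the proof is a direct application of \cref{thm:strong-montel}, mirroring the use of Bayart's strong Montel theorem in the proof of \cite{bayart_hardy_2002}*{Theorem 18}. The only subtlety worth flagging is that the limit function $f$ produced by \cref{thm:strong-montel} need not lie in $W$, but this is harmless: closedness of $H^\infty_\ap(\bC_0)$ inside $H^\infty(\bC_0)$ is sufficient to guarantee that the limit $f\circ\varphi$ of the convergent subsequence sits inside the target space.
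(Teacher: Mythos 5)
Your proof is correct and follows essentially the same route as the paper: apply \cref{thm:strong-montel} to the uniformly bounded sequence to extract a subsequence converging uniformly on half-planes, then use $\varphi(\bC_0)\subseteq\bC_\kappa$ to transfer this to uniform convergence of the composed sequence on $\bC_0$. The extra remark justifying $f\circ\varphi\in H^\infty_\ap(\bC_0)$ via closedness of $H^\infty_\ap(\bC_0)$ in $H^\infty(\bC_0)$ is a worthwhile clarification that the paper leaves implicit, but it does not change the argument.
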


\begin{proof}
    Let $\{f_n\}_{n\in\bZ^+}$ be a uniformly bounded sequence in $W$. By assumption it is then jointly almost periodic on some half-plane $\bC_\alpha$ with $\alpha>0$, so by \cref{thm:strong-montel} we can find some $f\in H^\infty_\ap(\bC_0)$ and some subsequence $\{f_{n_k}\}_{k\in\bZ^+}$ that converges uniformly to $f$ on all half-planes $\bC_\beta$ with $\beta>0$. In particular, as $\varphi(\bC_0)\subseteq\bC_\kappa$, we have that $\{f_{n_k}\circ\varphi\}_{k\in\bZ^+}$ converges uniformly to $f\circ\varphi\in H^\infty_\ap(\bC_0)$. The result follows.
\end{proof}

It is clear that the condition that $\varphi(\bC_0)\subseteq\bC_\kappa$ for some $\kappa>0$ is not necessary for compactness in general. For example, if $W$ is a finite-dimensional subspace of $H^\infty_\ap(\bC_0)$, then any operator on it (so in particular any composition operator $C_\varphi:W\to H^\infty_\ap(\bC_0)$) is compact. If we require $W$ to be large enough in the sense of containing some sequence of functions of the form $s\mapsto e^{-\lambda_ns}$ with $\lambda_n\to\infty$, then the condition is necessary as well. The proof of this is identical to how we showed that $\Re\varphi$ has to be bounded away from zero in the proof of \cref{thm:compact-composition}, with this sequence taking the role of $\{g_n\}_{n\in\bZ^+}$ in that proof, and so we leave the details to the reader.

\begin{theorem}\label{thm:compact-composition-w-necessity}
    Let $W$ be a subspace of $H^\infty_\ap(\bC_0)$ and let $\varphi:\bC_0\to\bC_0$ induce a bounded composition operator $C_\varphi:W\to H^\infty_\ap(\bC_0)$. Suppose that there exists a sequence $\{\lambda_n\}_{n\in\bZ^+}$ of non-negative real numbers with limit $\infty$ such that the functions $s\mapsto e^{-\lambda_ns}$ all belong to $W$. If $C_\varphi$ is compact, then $\varphi(\bC_0)\subseteq\bC_\kappa$ for some $\kappa>0$.
\end{theorem}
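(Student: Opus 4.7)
My plan is to argue by contradiction, directly mimicking the argument in the proof of \cref{thm:compact-composition} that showed $\Re\varphi$ must be bounded away from zero, but now using the sequence $\{s\mapsto e^{-\lambda_n s}\}_{n\in\bZ^+}$ in place of $\{g_n\}_{n\in\bZ^+}$ there. Suppose that $\varphi(\bC_0)$ is not contained in any half-plane $\bC_\kappa$ with $\kappa>0$. Then there exists a sequence $\{s_k\}_{k\in\bZ^+}$ in $\bC_0$ with $\Re\varphi(s_k)\to 0$ as $k\to\infty$.

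Next, I would define $g_n(s)=e^{-\lambda_n s}$. By assumption each $g_n\in W$, and the sequence $\{g_n\}_{n\in\bZ^+}$ is uniformly bounded in $W$ since $\norm{g_n}_\infty=1$. Because $C_\varphi$ is compact, there must exist a subsequence $\{g_{n_k}\circ\varphi\}_{k\in\bZ^+}$ that converges uniformly on $\bC_0$ to some $h\in H^\infty_\ap(\bC_0)$. On the other hand, since $\lambda_n\to\infty$ and $\Re\varphi(s)>0$ for every $s\in\bC_0$, the sequence $\{g_n\circ\varphi\}_{n\in\bZ^+}$ converges pointwise to $0$, and so $h\equiv 0$.

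The contradiction will come from the sequence $\{s_k\}_{k\in\bZ^+}$: for every $n\in\bZ^+$,
$$\norm{g_n\circ\varphi}_\infty\geq\lim_{k\to\infty}\abs{g_n(\varphi(s_k))}=\lim_{k\to\infty}e^{-\lambda_n\Re\varphi(s_k)}=1,$$
which is incompatible with uniform convergence of any subsequence to $0$. This forces $\varphi(\bC_0)\subseteq\bC_\kappa$ for some $\kappa>0$, proving the theorem. There is no genuine obstacle here; the proof is essentially a one-line adaptation of a piece of the proof of \cref{thm:compact-composition}, and the only role the hypothesis on $W$ plays is to guarantee that the test functions $g_n$ actually lie in $W$ so that compactness of $C_\varphi$ applies to them.
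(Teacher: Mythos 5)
Your proof is correct and is precisely the adaptation the paper has in mind: the paper explicitly states that the proof is ``identical to how we showed that $\Re\varphi$ has to be bounded away from zero in the proof of \cref{thm:compact-composition}, with this sequence taking the role of $\{g_n\}_{n\in\bZ^+}$,'' and leaves the details to the reader. You have carried out exactly that adaptation, including the correct observation that pointwise convergence of $g_n\circ\varphi$ to zero forces any uniform subsequential limit to be zero, which the norm lower bound then contradicts.
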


With this we obtain the following corollary.

\begin{corollary}
    Let $\lambda$ be a frequency, let $W$ be either $\sD_\ext^\infty(\lambda)$ with $L(\lambda)<\infty$ or $H^\infty_\lambda(\bC_0)$, and let $\varphi:\bC_0\to\bC_0$ induce a bounded composition operator $C_\varphi:W\to H^\infty_\ap(\bC_0)$. The following are equivalent:
    \begin{enumerate}[(i)]
        \item $C_\varphi$ is compact.
        \item $\varphi(\bC_0)\subseteq\bC_\kappa$ for some $\kappa>0$.
    \end{enumerate}
\end{corollary}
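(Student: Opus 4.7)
The plan is to deduce the corollary directly from \cref{thm:compact-composition-w-sufficiency,thm:compact-composition-w-necessity} by verifying their hypotheses for the two concrete choices of $W$. There is no new analytic content to develop; the real obstacle would have been establishing the strong Montel properties of $\sD_\ext^\infty(\lambda)$ and $H^\infty_\lambda(\bC_0)$, but these have already been secured via \cref{thm:l-lambda-montel,thm:defant-montel}.

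For the implication \labelcref{item:compactly-contained} $\Rightarrow$ \labelcref{item:compact}, I would first observe that the proofs of \cref{thm:l-lambda-montel,thm:defant-montel} do not merely produce a convergent subsequence, but proceed by showing that the given uniformly bounded family is already jointly almost periodic on every $\bC_\kappa$ with $\kappa>L(\lambda)$ in the first case, and on every $\bC_\kappa$ with $\kappa>0$ in the second. In particular, both $\sD_\ext^\infty(\lambda)$ (with $L(\lambda)<\infty$) and $H^\infty_\lambda(\bC_0)$ satisfy the hypothesis of \cref{thm:compact-composition-w-sufficiency}, namely that every uniformly bounded family in $W$ is jointly almost periodic on some half-plane $\bC_\alpha$ with $\alpha>0$. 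Applying \cref{thm:compact-composition-w-sufficiency} to $\varphi$ immediately yields that $C_\varphi$ is compact whenever $\varphi(\bC_0)\subseteq\bC_\kappa$ for some $\kappa>0$.

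For the implication \labelcref{item:compact} $\Rightarrow$ \labelcref{item:compactly-contained}, I would invoke \cref{thm:compact-composition-w-necessity}. It suffices to exhibit a sequence $\{\mu_n\}_{n\in\bZ^+}$ of non-negative real numbers with $\mu_n\to\infty$ such that $s\mapsto e^{-\mu_n s}$ lies in $W$ for every $n$. The frequency $\lambda=\{\lambda_n\}_{n\in\bZ^+}$ itself provides such a sequence, since by definition $\lambda_n\to\infty$. Each exponential $s\mapsto e^{-\lambda_n s}$ is a one-term $\lambda$-Dirichlet series and hence trivially belongs to $\sD_\ext^\infty(\lambda)$; it is also bounded, almost periodic on every $\bC_\kappa$ with $\kappa>0$, and has Bohr spectrum $\{\lambda_n\}\subseteq\lambda$, so it lies in $H^\infty_\lambda(\bC_0)$ as well. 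Thus in either case the hypothesis of \cref{thm:compact-composition-w-necessity} is met, and the conclusion $\varphi(\bC_0)\subseteq\bC_\kappa$ for some $\kappa>0$ follows from compactness of $C_\varphi$. The two implications together give the corollary.
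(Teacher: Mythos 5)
Your proposal is correct and follows essentially the same route as the paper: the forward direction is \cref{thm:compact-composition-w-necessity} applied with the exponentials $s\mapsto e^{-\lambda_n s}$ coming from the frequency itself, and the reverse direction is \cref{thm:compact-composition-w-sufficiency} once joint almost periodicity of bounded families in $W$ is in hand. The only stylistic difference is that you verify the joint almost periodicity hypothesis by inspecting the internals of the proofs of \cref{thm:l-lambda-montel,thm:defant-montel}, whereas the paper obtains the same fact more cleanly by combining the strong Montel conclusions of those theorems with the equivalence \labelcref{item:strong-montel}$\Leftrightarrow$\labelcref{item:jointly-ap-somewhere} in \cref{thm:strong-montel}; both are valid, but the latter avoids relying on proof details and uses only the stated theorems.
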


\begin{proof}
    By definition $W$ satisfies the assumptions of \cref{thm:compact-composition-w-necessity}, from which the first implication follows. By \cref{thm:l-lambda-montel}, \cref{thm:defant-montel} and \cref{thm:strong-montel}, we have that $W$ satisfies the assumptions of \cref{thm:compact-composition-w-sufficiency}, from which the other implication follows.
\end{proof}

In particular, as $\sH^\infty=\sD_\ext^\infty(\{\log n\}_{n\in\bZ^+})$, we obtain the characterization of the compact composition operators on $\sH^\infty$ of Bayart \cite{bayart_hardy_2002}*{Theorem 18}.

\begin{example}
    Consider the map $\varphi_a(s)=a+s$ for $s\in\bC_0$ and $a\geq0$. As $\varphi_a(\bC_0)=\bC_a$, we see that the operator $C_\varphi$ is never compact on $H^\infty_\ap(\bC_0)$, but is compact on $H^\infty_\lambda(\bC_0)$ for an arbitrary frequency $\lambda$ if and only if $a>0$.
\end{example}

\bibliography{references}

\end{document}